\definecolor{bgcolor}{rgb}{0.8,1,1}
\definecolor{bgcolor2}{rgb}{0.8,1,0.8}
\definecolor{mydarkgreen}{RGB}{39,130,67}
\definecolor{mydarkred}{RGB}{192,47,25}
\newcommand{\R}{\ensuremath{\mathbb{R}}}
\newcommand{\Prob}{\ensuremath{\mathbb{P}}}
\newcommand{\EE}{\mathbb{E}}
\newcommand{\E}{\mathbb{E}}
\def\<#1,#2>{\left\langle #1,#2 \right\rangle}
\def\batchbound{M}
\newcommand{\PP}{\mathbb{P}}
\newcommand{\avgf}{f_{t}}
\def\rset{\mathbb{R}}
\def\omax{\sigma^{2}_{1}}
\def\tmax{\sigma^{2}_{2}}
\def\nset{\ensuremath{\mathbb{N}}}
\def\Zset{\mathsf{Z}}
\def\Zsigma{\mathcal{Z}}
\def\MKQ{\mathrm{Q}}
\newcommand{\dobrush}{\mathsf{\Delta}}
\newcommandx{\dobru}[3][1=,3=]{\dobrush_{#1}^{#3}( #2)}  
\def\nbiter{N}
\newcommand{\circledOne}{\text{\ding{172}}}
\newcommand{\circledTwo}{\text{\ding{173}}}
\newtheorem{lemma}{Lemma}
\newtheorem{corollary}{Corollary}
\crefname{assumption}{assumption}{assumptions}
\newenvironment{theoremprime}[1]
  {%
   \addtocounter{theorem}{-1}%
   \begin{theorem}}
  {\end{theorem}}
\newenvironment{lemmaprime}[1]
  {%
   \addtocounter{lemma}{-1}%
   \begin{lemma}}
  {\end{lemma}}
\newenvironment{assumptionprime}[1]
  {%
   \addtocounter{assumption}{-1}%
   \begin{assumption}}
  {\end{assumption}}
\newcommand\blfootnote[1]{%
  \begingroup
  \renewcommand\thefootnote{}\footnote{#1}%
  \addtocounter{footnote}{-1}%
  \endgroup
}
\begin{document}
\begin{mainpart}
\blfootnote{Emails: boris.prokhorov@epfl.ch, s.chebykin@mail.utoronto.ca, gasnikov@yandex.ru, anbeznosikov@gmail.com}
\blfootnote{\textsuperscript{*}Equal contribution. This work was conducted when the authors were at BRAIn Lab.}

{
\section{Introduction}
Stochasticity is a fundamental aspect of many optimization problems, naturally arising in the field of machine learning \citep{shalev2014understanding, Goodfellow-et-al-2016}. Stochastic gradient descent (SGD) \citep{robbins1951stochastic} and its accelerated variants \citep{nesterov1983method, ghadimi2016accelerated}
have become a de facto optimizers for modern large models training. Theoretical properties of SGD have been extensively studied under various statistical frameworks \citep{NIPS2011_40008b9a, ghadimi2013stochastic, dieuleveut2017harder, vaswani2019fast}, often relying on the assumption that noise is independent and identically distributed (i.i.d.). However, in many real-world applications --- including reinforcement learning (RL) \citep{bhandari2018finite, durmus2021stability}, distributed optimization \citep{4276978, 4434888}, and bandit problems \citep{auer2002finite} --- noise is not i.i.d., instead exhibiting correlations or \emph{Markovian structure}.

For instance, in the mentioned growing field of RL, sequential interactions with the environment induce state-dependent structure of the noise, creating a need for non-i.i.d. noise aware algorithms. Although several gradient-based methods for Markovian stochastic oracles have been studied in the past decade \citep{duchi2012ergodic, even2023stochastic}, policy optimization in RL is based solely on reward feedback, making traditional methods inapplicable, since there is no access to first-order information \citep{salimans2017evolution, choromanski2018structured, fazel2018global}. \emph{Zero-order optimization} (ZOO) methods are specifically developed to address such problems, and are used in scenarios where gradients are unavailable or prohibitively expensive to compute. Apart from RL, ZOO techniques are widely employed in adversarial attack generation \citep{chen2017zoo}, hyperparameter tuning \citep{7352306, wu2020practical}, continuous bandits \citep{bubeck2012regret, shamir2017optimal} and other applications \citep{taskar2005learning, lian2015asynchronous}. While the literature on ZOO is extensive, this work is, to our knowledge, \emph{the first study of optimization problem with both zero-order information and Markovian noise}, aimed at developing an optimal algorithm for a large family of problems from the intersection of these two areas.

\subsection{Related works}
\label{sec:lb}
$\diamond$ \textbf{Zero-order} methods is one of the key and oldest areas of optimization. There are various zero-order approaches, here we can briefly highlight, e.g., one-dimensional methods \citep{d6e06843-9f44-3b92-881d-f5d31b55c5e7, nocedal2006numerical} or their high-dimensional analogues \citep{newman1965location}, ellipsoid algorithms \citep{yudin1976informational} and searches along random directions \citep{bergou2020stochastic}. Currently, the most popular and most studied mechanism behind ZOO methods is the finite-difference approximation of the gradient described in \citep{polyak1987introduction, flaxman2005online, nesterov2017random}. The idea is simple: querying two sufficiently close points is essentially equivalent to finding a value of the directional derivative of the function:
\begin{equation}
    \label{det_app}
    \langle\nabla f(x), e\rangle \approx \frac{f(x+te) - f(x)}{t} \approx \frac{f(x + te) - f(x - te)}{2t},
\end{equation}
where $e$ is a random direction. It can be a random coordinate, a vector from the Euclidean sphere or a sample of the Gaussian distribution. The approximation \eqref{det_app} in turn leads back to the gradient methods or coordinate algorithms of \citet{nesterov2012efficiency}. There are, however, several differences:

$\bullet$ First, to get full gradient information, the algorithm would need $d$ queries instead of one gradient oracle call (here $d$ is the dimension of $x$).

$\bullet$ Second, if the ZO oracle is inexact, i.e. only noisy values of function are available, then finite difference schemes can fail if noise components do not cancel out. 

The setting of the second point, when function evaluations experience zero-mean additive perturbations, is called \emph{Stochastic ZOO}. The stochasticity, as noted before, is abundant in the modern optimization world. 
To tackle this issue, additional assumptions about the noise structure are required. Here we briefly discuss two main ideas adopted in the literature, and refer the reader to \Cref{sec:main_res} for precise definitions.

In the case of \emph{two-point feedback}, we assume that for a fixed value of the noise variable one can call the stochastic zero-order oracle at least twice. It means that we can compute the finite difference approximation of the following form:
\begin{equation}
    \label{2pf}
    p(x, \xi, e) = \frac{f(x + te, \xi) - f(x - te, \xi)}{2t} \approx \langle \nabla_x f(x, \xi), e\rangle
\end{equation}
Such approximation produces an estimate for the directional derivative of a noisy realization $f(\cdot, \xi)$ of the function $f$. As mentioned before, the approximation \eqref{2pf} can be used instead of the (stochastic) gradient in first-order methods. In the case of independent randomness, a large number of works are based on this idea. There are results for both non-smooth and smooth convex problems built on classical and accelerated gradient methods of \citet{nesterov2017random}. In the scope of our paper, we are interested in the results for smooth strongly convex problems from \citep{dvurechensky2021accelerated}, namely estimates on zero-order oracle calls to achieve $\varepsilon$-solution in terms of $\norm{ x - x^*}$: $\mathcal{O} (\frac{d \tmax}{\mu^2\varepsilon})$. Here $\sigma_2$ is introduced as the variance of the gradient, i.e. it is assumed that $\mathbb{E}_{\xi}{\nabla f(x, \xi)}=\nabla f(x)$ and $\mathbb{E}_{\xi} \| \nabla f(x, \xi) - \nabla f(x)\|^2 \leq \tmax.$
The main limitation of two-point approach is that several evaluations with the same noise variable are required, which is well suited for problems like empirical risk optimization \citep{liu2018zeroth}, but can be a major barrier for RL or online optimization.

In the \emph{one-point feedback} setting, a more general stochasticity is assumed. In this case, each call to the zero-order oracle generates a new randomness. Now the approximation \eqref{det_app} looks as follows
\begin{equation}
    \label{1pf}
    p(x, \xi^{\pm}, e) = \frac{f(x + te, \xi^+) - f(x-te, \xi^-)}{2t}
\end{equation}
Using different $\xi^+$ and $\xi^-$ in \eqref{1pf} renders any conditions on the properties of $\nabla f(\cdot, \xi)$ useless. Instead, it is assumed that $\mathbb{E}_{\xi}{f(x, \xi)}= f(x)$ and $\mathbb{E}_{\xi} | f(x, \xi) - f(x)|^2 \leq \omax$. With one-point feedback, the major problem is choosing the right shift $t$ for the finite difference scheme. Picking it too small results in an amplification of the additive noise, and taking $t$ too big leads to a poor gradient estimate. Because of this variance trade-off, the optimal rate for methods with one-point approximation is worse than for two-point feedback. In particular, for smooth strongly convex problems we have the following estimate on zero-order oracle calls \citep{gasnikov2017stochastic}:
$\mathcal{O} (\frac{d^2 \omax}{\mu^3\varepsilon^2})$.

Although zero-order gradient approximation schemes suffer from high variance, there is a surprising property that makes them superior in \emph{non-smooth} optimization \citep{gasnikov2022power, NEURIPS2023_0a70c9cd, shamir2017optimal}. The idea goes back to the 70s and utilizes the fact that 
\begin{gather*}
    \E [e \cdot p(x, \xi^{(\pm)}, e)] = \tfrac{1}{d}\nabla f_t(x), \text{ where }  f_t \text{ is a \emph{smoothed} function, defined as}
    \\
    f_t(x) = \E_r \left[ f (x + tr) \right] \text{ with } r \sim RB^d_2
\end{gather*}

In fact, it can be shown that $f_t$ is $\tfrac{\sqrt{d}G}{t}$-smooth if $f$ is $G$-Lipschitz. This makes zero-order approximation a suitable candidate for a stochastic gradient of $f_t$. Optimizing this function with a first-order method produces some solution, but it may not be the optima of $f$ \citep{gasnikov2022power}. From this point, there is a game -- for small $t$ the functions $f$ and $f_t$ are closer and for big $t$ the function $f_t$ is easier to optimize as it gets smoother.


In more recent works, there have been many improvements in theoretical understanding of ZO methods. The authors consider higher-order smoothness of the underlying function \citep{akhavan2024gradient}, tackle non-convex non-smooth problems \citep{NEURIPS2023_0a70c9cd}, take arbitrary Bregman geometry to benefit in terms of oracle complexity  \citep{shamir2017optimal, gorbunov2018accelerated}, and come up with sharp information-theoretic lower bounds to understand computational limits \citep{duchi2015optimal, akhavan2020exploiting}. But none of them consider Markovian stochasticity.

$\diamond$ \textbf{Markovian first-order methods.}
While the literature on stochastic optimization with i.i.d. noise is extensive, research addressing the Markovian setting remains relatively sparse. In our paper, we focus on the most "friendly" type of uniformly geometrically ergodic Markov chains (see \Cref{sec:main_res} for precise definitions). 

\citet{duchi2012ergodic} conducted pioneering work on non-i.i.d. noise, investigating the Ergodic Mirror Descent algorithm and establishing optimal convergence rates for non-smooth convex problems. For smooth problems there were different attempts to get record-breaking estimates on the first-order oracle \citep{doan2020convergence, doan2022finite, zhao2023markov, even2023stochastic}. Finally, the optimal results were obtained for both convex and non-convex problems in the works of \citet{beznosikov2024first, solodkin2024methods}. In particular, for smooth strongly convex objectives under Markovian noise the authors give the complexity of the form: $\mathcal{O} (\frac{\tau \tmax}{\mu^2\varepsilon})$, where $\tau$ is defined as the mixing time of the corresponding Markov chain (see \Cref{sec:main_res}). Note that these works utilize Multilevel Monte Carlo (MLMC) batching technique, which helps to effectively interact with Markovian noise. We will need this approach as well. Note that it was first considered in Markovian gradient optimization by \citet{dorfman2022adapting} for automatic adaptation to unknown $\tau$.

$\diamond$ \textbf{Hypothesis.} The complexity estimate for strongly convex first-order stochastic methods is $\textstyle \mathcal{O} (\frac{\tmax}{\mu^2\varepsilon})$ \citep{NIPS2011_40008b9a, needell2014stochastic}. Lower bounds for the same class of problems and methods show that the result is unimprovable \citep{yudin1976informational}. As mentioned before, the transition from i.i.d. stochasticity to Markovian stochasticity increases the estimate by $\tau$ times. This result is also optimal as shown by \citet{beznosikov2024first}. At the same time, going from gradient oracle to zero-order methods adds a multiplier $d$ in the two-point feedback and $\nicefrac{d^2}{\varepsilon}$ in the one-point case. And this estimate is unimprovable as well \citep{akhavan2020exploiting, duchi2015optimal}. 
\begin{wrapfigure}{r}{0.35\textwidth}
    \vspace{-0.3cm}
    \hspace{-0.5cm}
    \scalebox{1.1}{
        \begin{minipage}{0.6\textwidth}
            \centering
            \begin{tabularx}{\textwidth}{cccc}
                & \textbf{FO} & 
                \multirow{2}{*}{ \hspace{-0.4cm}
                    \begin{tabular}{c}  
                        \\ 
                        \begin{tikzpicture}
                            \tikzset{place/.style={circle,draw=black,fill=black,thick}}
                            \tikzset{surd/.style={circle}}                   
                            \tikzset{pre/.style={ <-,shorten <=2pt,shorten >=2pt, >=stealth', semithick}} 
                            \tikzset{math mode/.style={%
                                    execute at begin node=$,%
                                    execute at end node=$,%
                                }
                            }

                            \node[single arrow, draw=green!50, very thick, fill=green!50,
                                minimum width = 20pt, single arrow head extend=3pt,
                                minimum height=12mm,
                                rotate=0] {$d$}; 
                        \end{tikzpicture}
                    \end{tabular}
                } 
                & \hspace{-0.6cm} \textbf{ZO 2P
                }
                \\ [2pt]
                \setlength{\extrarowheight}{40pt}
                \textbf{IID} & $\frac{\tmax}{\mu^2\varepsilon}$ &
                & \hspace{-0.6cm} $d \cdot \frac{\tmax}{\mu^2\varepsilon}$
                \\ 
                \multicolumn{2}{c}{
                    \begin{tabular}{@{}c@{}}
                        \vspace{-1cm}
                        \\
                        \hspace{1.4cm}
                        \begin{tikzpicture}
                            \tikzset{place/.style={circle,draw=black,fill=black,thick}}
                            \tikzset{surd/.style={circle}}                   
                            \tikzset{pre/.style={ <-,shorten <=2pt,shorten >=2pt, >=stealth', semithick}} 
                            \tikzset{math mode/.style={%
                                    execute at begin node=$,%
                                    execute at end node=$,%
                                }
                            }
                            \node[single arrow, draw=green!50, very thick, fill=green!50,
                                minimum width = 20pt, single arrow head extend=3pt,
                                minimum height=5mm,
                                rotate=270] {\rotatebox[origin=c]{90}{$\tau$}}; 
                        \end{tikzpicture}
                    \end{tabular}
                } 
                &
                \hspace{-0.9cm}
                \begin{tikzpicture}
                    \tikzset{place/.style={circle,draw=black,fill=black,thick}}
                    \tikzset{surd/.style={circle}}                   
                    \tikzset{pre/.style={ <-,shorten <=2pt,shorten >=2pt, >=stealth', semithick}} 
                    \tikzset{math mode/.style={%
                            execute at begin node=$,%
                            execute at end node=$,%
                        }
                    }
                    \node[single arrow, draw=green!10, very thick, fill=green!10,
                        minimum width = 10pt, single arrow head extend=3pt,
                        minimum height=15mm,
                        rotate=340] {\rotatebox[origin=c]{20}{\textbf{?}}}; 
                \end{tikzpicture}
                &
                \begin{tabular}{@{}c@{}}
                    \vspace{-1cm}
                    \\
                    \hspace{-0.7cm}
                    \begin{tikzpicture}
                        \tikzset{place/.style={circle,draw=black,fill=black,thick}}
                        \tikzset{surd/.style={circle}}                   
                        \tikzset{pre/.style={ <-,shorten <=2pt,shorten >=2pt, >=stealth', semithick}} 
                        \tikzset{math mode/.style={%
                                execute at begin node=$,%
                                execute at end node=$,%
                            }
                        }
                        \node[single arrow, draw=green!10, very thick, fill=green!10,
                            minimum width = 20pt, single arrow head extend=3pt,
                            minimum height=5mm,
                            rotate=270] {\rotatebox[origin=c]{90}{\textbf{?}}}; 
                    \end{tikzpicture}
                \end{tabular}
                \vspace{-1mm}
                \\ [0pt]
                \textbf{Mark.} & $\tau \cdot \frac{\tmax}{\mu^2\varepsilon}$
                & 
                \begin{tabular}{@{}c@{}}
                    \vspace{-0.3cm}
                    \\ 
                    \hspace{-0.4cm}
                    \begin{tikzpicture}
                        \tikzset{place/.style={circle,draw=black,fill=black,thick}}
                        \tikzset{surd/.style={circle}}                   
                        \tikzset{pre/.style={ <-,shorten <=2pt,shorten >=2pt, >=stealth', semithick}} 
                        \tikzset{math mode/.style={%
                                execute at begin node=$,%
                                execute at end node=$,%
                            }
                        }
                        \node[single arrow, draw=green!10, very thick, fill=green!10,
                            minimum width = 20pt, single arrow head extend=3pt,
                            minimum height=12mm,
                            rotate=0] {\textbf{?}}; 
                    \end{tikzpicture}

                \end{tabular}
                &
                \hspace{-0.6cm} $ d \tau  \cdot \frac{\tmax}{\mu^2\varepsilon}$
            \end{tabularx}
            \label{tab:predicted_rate}
        \end{minipage}
    }
    \vspace{-1.5cm}
\end{wrapfigure}
The hypothesis arises that the transition to zero-order Markov optimization adds two multipliers at once: $d \tau$ and $\nicefrac{d^2\tau}{\varepsilon}$ for two- and one-point. It is illustrated in the following diagram for two-point feedback:

\subsection{Our contribution}

\label{sec:contribution}
Our main contribution is the answer to the hypothesis above: \emph{surprisingly, it is not true}. In more detail:

$\diamond$ \textbf{Accelerated SGD.} We present the first analysis of Zero-Order Accelerated SGD under Markovian noise, considering both two-point and one-point feedback. Contrary to the expected multiplicative scaling of convergence rates with both dimensionality and mixing time, our analysis reveals a significant acceleration, as presented in \Cref{tab:optimal_rate}. It turns out that if $\tau$ is smaller than $d$, our results do not differ at all from the gradient-free methods with independent stochasticity. The key technique behind this acceleration is described in \Cref{sec:batching}. The theory is also numerically validated in \Cref{sec:experiments}.

    \begin{table}[ht]
        \centering
        \caption{Summary of upper bounds. For notation, see \Cref{tab:definitions}}
        \begin{tabularx}{0.7\linewidth}{XXXXX}
            \toprule
            \multirow{2}{*}{}  & \multicolumn{2}{c}{{\textbf{Smooth}}} & \multicolumn{2}{c}{{\textbf{Non-smooth}}} \\
            & {\textbf{IID}} & {\textbf{Markov.}} & {\textbf{IID}} & {\textbf{Markov.}} \\
            \midrule
            \textbf{FO} & {$\frac{\tmax}{\mu^2\varepsilon}$} \citep{robbins1951stochastic} &
            $\tau \frac{\tmax}{\mu^2\varepsilon}$ \citep{beznosikov2024first} & $\frac{G^2}{\mu^2\varepsilon}$ \citep{shamir2013stochastic} & $\tau\frac{G^2}{\mu^2\varepsilon}$ \citep{duchi2012ergodic}\footnotemark[1] \\
            \midrule        
            \textbf{ZO 2P} & {$d \frac{\tmax}{\mu^2\varepsilon}$} \citep{hazan2014beyond} & $\textcolor{teal}{(d + \tau)}\frac{\tmax}{\mu^2\varepsilon}$ & $d \frac{G^2}{\mu^2\varepsilon}$ \citep{gasnikov2022power} & $\textcolor{teal}{(d + \tau)} \frac{G^2}{\mu^2\varepsilon}$\\
            \midrule
            \textbf{ZO 1P} & {$d^2\frac{\omax}{\mu^3\varepsilon^2}$ \citep{akhavan2024gradient}\footnotemark[2]} & $\textcolor{teal}{d(d + \tau)} \frac{L\omax}{\mu^3\varepsilon^2}$ & $d^2 \frac{\omax G^2}{\mu^4\varepsilon^3}$ \citep{gasnikov2017stochastic} & $\textcolor{teal}{d(d + \tau)} \frac{\omax G^2}{\mu^4\varepsilon^3}$\\
            \bottomrule
        \end{tabularx}
        \label{tab:optimal_rate}
    \end{table}

$\diamond$ \textbf{Non-smooth problems.} We also consider non-smooth problems with Markovian noise. Using the smoothing technique we come up with a corresponding upper bounds in this case, as shown in \Cref{tab:optimal_rate}. The details of these bounds are presented in \Cref{sec:nonsmooth_appendix}.

$\diamond$ \textbf{Computational efficiency.}
First, as noted above, our method gives the same oracle complexity for any $\tau \leq d$. Moreover, if we assume that calling a zero-order oracle is $d$ times cheaper than computing the corresponding gradient, then the gradient method with Markov noise will require resources proportionally to $d\cdot \tau$ --- the cost of one oracle call is $d$ and the complexity scales as $\tau$ for the first-order method from \Cref{tab:optimal_rate}. At the same time, the resource complexity of our zero-order method is proportional to $d + \tau$.

$\diamond$ \textbf{Lower bounds.} In \Cref{sec:lower_bounds} we establish the first information-theoretic lower bounds for solving Markovian optimization problems with one-point and two-point feedback. Our  results match the convergence guarantee of our algorithm up to logarithmic factors, showing that the analysis is accurate and no further improvement is possible.

\footnotetext[1]{The authors consider general convex case. Using standard restart technique, we get the corresponding bound in the strongly convex case.}
\footnotetext[2]{The noise is assumed to be point-independent.}

\begin{table}[h]
    \centering
    \caption{Notations \& Definitions
    }
    \begin{tabularx}{\textwidth}{lX ll}
        \toprule
        \textbf{Sym.} & \textbf{Definition} & \textbf{Sym.} & \textbf{Definition}\\
        \midrule

        \(\norm{\cdot}\), \(\langle \cdot, \cdot \rangle\) & Norm, dot product, assumed Euclidean by default &
        $\varepsilon$ & $\|x - x^*\|^2$
        \\ \addlinespace[0.2em]

        \(\Zset\), \(\Zsigma\) & Complete separable metric space, its Borel \(\sigma\)-algebra &
        $d$ & Problem dimension
        \\ \addlinespace[0.2em]

        \(\MKQ\) & Markov kernel on \(\Zset \times \Zsigma\) &
        $L$ & {Gradient's Lipshitz constant}
        \\ \addlinespace[0.2em]

        \(\PP_{\xi}\), \(\E_{\xi}\) & Probability, Expectation under initial distribution \(\xi\)\footnotemark[3] &
        $\mu$ & Strong convexity constant
        \\ \addlinespace[0.2em]

        \(\{Z_k\}\) & Canonical process with kernel \(\MKQ\)&
        $G$ & Function's Lipshitz constant
        \\ \addlinespace[0.2em]

        $RB^d_2,RS^d_2$ & Uniform distribution on unit a $\ell_2$-ball, -sphere &
        $\omax$ & $|F(x, Z) - f(x)|^2 \leq \omax$
        \\ \addlinespace[0.2em]

        $e$ & Random direction, $e \sim RS^d_2$ &
        $\tmax$ & $\norm{\nabla F(x, Z) - \nabla f(x)}^2 \leq \tmax$
        \\ \addlinespace[0.2em]

        $a_n \lesssim b_n$ & $\exists c \in \R$ (universal constant): $a_n \leq c b_{n}$ for all $n$ &
        $\tau$ & Mixing time of $Z$
        \\ \addlinespace[0.2em]

        $a_n \simeq b_n$ & $a_n \lesssim b_n$ and $b_n \lesssim a_n$ &
        $g$, $\hat{g}$ & Gradient estimators        
        \\ \addlinespace[0.2em]

        $T = \tilde{\mathcal{O}}(S)$ & $T \leq {poly}(\log S)\cdot S$ as $\varepsilon \rightarrow 0$ &
        $f_t(x)$ & ${\E_r \left[ f (x + tr) \right], r \sim RB^d_2}$
        \\ \addlinespace[0.em]
        \bottomrule
    \end{tabularx}
    \label{tab:definitions}
\end{table}
\footnotetext[3]{
By construction, for any \(A \in \Zsigma\), we have $\PP_{\xi}(Z_k \in A \mid Z_{k-1}) = \MKQ(Z_{k-1}, A), \quad \PP_{\xi}\text{-a.s.}$ }

\section{Main results}
\label{sec:main_res}
We are now ready for a more formal presentation. In this paper, we study the minimization problem
\begin{equation}
    \label{eq:erm}
    \min_{x \in \R^d} f(x) := \E_{Z \sim \pi} \left[ F(x, Z) \right],
\end{equation}
where $\pi$ is an unknown distribution (see \Cref{as:mixing_time}) and access to the function $f$ (not to its gradient $\nabla f$) is available through a stochastic one-point or two-point oracle $F(x, Z)$.


In our analysis, we will use a set of assumptions on the underlying function $f$ and its oracle, starting with smoothness and convexity:
\begin{assumption}
    \label{as:lipsh_grad}
    The function $f$ is $L$-smooth on $\R^d$ with $L > 0$, i.e., it is differentiable and there is a constant $L > 0$ such that the following inequality holds for all $x,y \in \R^d$:
    \begin{equation*}
        \norm{\nabla f(x) - \nabla f(y) } \leq L \norm{x-y }.
    \end{equation*}
\end{assumption}
In the two-point feedback setting, we require the following generalization:
\begin{assumptionprime}{as:lipsh_grad}    
    \label{as:lipsh_grad_two}
    For all $Z \in \Zset$ the function $F(\cdot, Z)$ is $L$-smooth on $\R^d$.
\end{assumptionprime}
Note that the uniform \ref{as:lipsh_grad_two} implies \ref{as:lipsh_grad}.
\begin{assumption}
    \label{as:strong_conv}
    The function $f$ is continuously differentiable and $\mu$-strongly convex on $\R^d$, i.e., there is a constant $\mu > 0$ such that the following inequality holds for all $x,y \in \R^d$:
    \begin{equation}
        \label{eq:strong_conv}
        \frac{\mu}{2}\norm{x-y}^2 \leq f(x) - f(y) - \langle \nabla f(y), x-y \rangle.
    \end{equation}
\end{assumption}
We now turn to assumptions on the sequence of noise states \(\{Z_i\}_{i=0}^{\infty}\). Specifically, we consider the case where \(\{Z_i\}_{i=0}^{\infty}\) forms a time-homogeneous Markov chain. Let \(\MKQ\) denote the corresponding Markov kernel. We impose the following assumption on \(\MKQ\) to characterize its mixing properties:

\begin{assumption}
    \label{as:mixing_time}
    $\{Z_i\}_{i=0}^{\infty}$ is a stationary Markov chain on $(\Zset,\Zsigma)$ with Markov kernel $\MKQ$ and unique invariant distribution $\pi$. Moreover, $\MKQ$ is uniformly geometrically ergodic with mixing time $\tau \in \nset$, i.e., for every $k \in \nset$, total variation after $k$ steps decays as
    \begin{equation}
        \label{eq:drift-condition}
        \begin{split}
            \dobru{\MKQ^k} &= \sup_{z,z' \in \Zset} (1/2) \norm{\MKQ^k(z, \cdot) - \MKQ^k(z',\cdot)}_{\mathsf{TV}} \leq (1/4)^{\lfloor k / \tau \rfloor}.  
        \end{split}
    \end{equation}
\end{assumption}
\Cref{as:mixing_time} is common in the literature on Markovian stochasticity \citep{duchi2012ergodic, doan2020convergence, dorfman2022adapting, beznosikov2024first, solodkin2024methods}.
It includes, for instance, irreducible aperiodic finite Markov chains \citep{even2023stochastic}.
The mixing time $\tau$ reflects how quickly the distribution of the chain approaches stationarity, providing a natural measure of the temporal dependence in the data.




Next, we specify our assumptions on the oracle. As discussed in Section \ref{sec:lb}, these assumptions differ based on the type of feedback.

\begin{assumption}[for one-point]
    \label{as:noise}


    For all $x \in \rset^{d}$ it holds that $\E_{\pi}[F(x, Z)] = f(x)$. Moreover, for all $Z \in \Zset$ and $x \in \rset^{d}$ it holds that
    \begin{equation*}
        |F(x, Z) - f(x)|^2 \leq \omax\,.
    \end{equation*}
\end{assumption}

\begin{assumptionprime}{as:noise}[for two-point]
    \label{as:noise_two}
    For all $x \in \rset^{d}$ it holds that $\E_{\pi}[\nabla F(x, Z)] = \nabla f(x)$. Moreover, for all $Z \in \Zset$ and $x \in \rset^{d}$ it holds that
    \begin{equation*}
        \norm{\nabla F(x, Z) - \nabla f(x)}^{2} \leq \tmax\,.
    \end{equation*}
\end{assumptionprime}
Recent works on stochastic ZOO methods have considered milder assumptions, such as bounded variance (see \Cref{sec:lb}). 
However, the uniform boundedness assumed in \Cref{as:noise,as:noise_two}, is standard in analyses under Markovian noise \citep{duchi2012ergodic, doan2020convergence, dorfman2022adapting, beznosikov2024first, solodkin2024methods}.
These assumptions can be relaxed under stronger conditions, e.g., uniform convexity and smoothness of $F(\cdot, Z)$ \citep{even2023stochastic}.


\Cref{as:mixing_time,as:noise} allow us to reduce the variance of the noise via batching, similarly the to i.i.d. setting.
This is captured in the following technical lemma:
\begin{lemma}
    \label{lem:tech_markov}
    Let \Cref{as:mixing_time,as:noise}(\ref{as:noise_two}) hold. Then for any $n \geq 1$ and $x \in \rset^{d}$ and any initial distribution $\xi$ on $(\Zset,\Zsigma)$, we have
    \begin{equation*}
        \E_{\xi} \left[\frac{1}{n}\sum\limits_{i=1}^{n} F(x, Z_{i}) - f(x) \right]^2 \lesssim \frac{\tau}{n} \omax \text{,~~} \E_{\xi} \norm{ \frac{1}{n}\sum\limits_{i=1}^{n}\nabla F(x, Z_i) - \nabla f(x)}^2  \lesssim \frac{\tau}{n} \tmax.
    \end{equation*}
\end{lemma}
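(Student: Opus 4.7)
The plan is to reduce the batched quantity to a double sum over time indices and exploit the geometric decay of correlations ensured by \Cref{as:mixing_time}. I sketch the one-point case; the two-point bound follows identically with scalars replaced by vectors and $|\cdot|$ by $\|\cdot\|$.

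Set $Y_i := F(x, Z_i) - f(x)$, so that $|Y_i| \leq \sqrt{\omax}$ pointwise by \Cref{as:noise}. Expand $\E_\xi[(\tfrac{1}{n}\sum_{i=1}^{n} Y_i)^2] = \tfrac{1}{n^2}\sum_i \E_\xi[Y_i^2] + \tfrac{2}{n^2}\sum_{i<j}\E_\xi[Y_i Y_j]$; the diagonal contributes at most $\omax/n$ directly. For the cross-terms, the Markov property gives $\E_\xi[Y_i Y_j] = \E_\xi[Y_i\,\varphi(Z_i)]$ with
\begin{equation*}
\varphi(z) := \int \bigl(F(x, z') - f(x)\bigr)\,\bigl(\MKQ^{j-i}(z, \mathrm{d} z') - \pi(\mathrm{d} z')\bigr),
\end{equation*}
where invariance $f(x) = \E_\pi F(x, Z)$ is used to subtract a constant inside the integral. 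The standard bound $|\int h\,\mathrm{d}(\mu - \nu)| \leq \|h\|_\infty \|\mu - \nu\|_{\mathsf{TV}}$, combined with the averaged Dobrushin inequality $\sup_A|\MKQ^k(z, A) - \pi(A)| \lesssim (1/4)^{\lfloor k/\tau\rfloor}$ (obtained by integrating \eqref{eq:drift-condition} against $\pi$), yields $|\varphi(z)| \lesssim \sqrt{\omax}\,(1/4)^{\lfloor (j-i)/\tau\rfloor}$ uniformly in $z$, and hence $|\E_\xi[Y_i Y_j]| \lesssim \omax\,(1/4)^{\lfloor (j-i)/\tau\rfloor}$.

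Grouping cross-terms by $k := j - i$ and summing the resulting geometric series gives $\sum_{i<j}|\E_\xi[Y_i Y_j]| \lesssim \omax \cdot n \cdot \sum_{k\geq 1}(1/4)^{\lfloor k/\tau\rfloor} \lesssim n\tau\omax$, since the series partitions into blocks of length $\tau$ with geometrically decreasing weights and therefore equals $\Theta(\tau)$. Dividing by $n^2$ delivers the announced $\tau\omax/n$ bound. The two-point version is obtained verbatim by replacing $F(x,\cdot) - f(x)$ with the vector-valued $\nabla F(x,\cdot) - \nabla f(x)$ and $Y_i Y_j$ with $\langle Y_i, Y_j\rangle$; Cauchy--Schwarz then plays the role of $|Y_i Y_j| \leq |Y_i||Y_j|$ in bounding $|\E_\xi\langle Y_i, Y_j\rangle|$ via $\|\Phi(z)\| = \sup_{\|v\|=1}\int v^\top(\nabla F - \nabla f)\,\mathrm{d}(\MKQ^{j-i}(z,\cdot) - \pi)$.

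The main subtlety is that \Cref{as:mixing_time} is phrased as a Dobrushin coefficient (sup over pairs of starting states), not as a TV bound between $\xi\MKQ^k$ and $\pi$. This is exactly the information needed: by the Markov property, $\E_\xi[Y_j \mid \mathcal{F}_i]$ is a function of $Z_i$ alone, so only pointwise control of $\|\MKQ^k(z, \cdot) - \pi\|_{\mathsf{TV}}$ is required, and this follows from invariance of $\pi$ by averaging the Dobrushin bound, \emph{irrespective of the initial distribution $\xi$}.
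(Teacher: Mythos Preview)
Your argument is correct and is the standard covariance-expansion route for ergodic averages: expand the square, control cross-terms via the Markov property and the pointwise TV bound obtained by averaging the Dobrushin coefficient against $\pi$, then sum the geometric tail in blocks of length~$\tau$. The paper itself does not give a self-contained proof of this lemma; it simply defers to Lemma~1 of \citet{beznosikov2024first}, where the same decomposition is used, so your sketch is in line with what the paper relies on.
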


\subsection{Batching technique}
\label{sec:batching}
In this section, we describe the main tools used to establish the $(d + \tau)$-type scaling of the error rate. We will focus on reducing the variance and bias of gradient estimators using a specialized batching approach.

We begin by fixing a common building block of our gradient estimators at a point $x$ for both one-point and two-point feedback, as introduced in \Cref{sec:lb}:
\begin{equation*}
    \hat{g}(x, Z^{(\pm)}, e) = d \cdot p(x, Z^{(\pm)}, e) \cdot e = e \cdot \begin{dcases*}
        d\dfrac{F(x + te, Z^+) - F(x-te, Z^-)}{2t} & \text{(one-point),}
        \\
        d\dfrac{F(x + te, Z) - F(x-te, Z)}{2t} & \text{(two-point).}
    \end{dcases*}
\end{equation*}
These estimators exhibit a twofold randomness that affects how rapidly they concentrate around the true gradient, as we will discuss below.

For clarity, we focus our discussion on the one-point case, although our conclusions extend to the two-point case as well.


A widely used variance reduction technique is \emph{mini-batching}, where one computes $F(x, Z_i)$ over a batch of noise variables $\{Z_i\}_{i=1}^{n}$. The mini-batch gradient estimator is given by:
\begin{equation*}
    \hat{g}_{mb}(x) = \frac{1}{n}\sum\limits_{i=1}^{n}\hat{g}(x, Z_i^{\pm}, e) =d\overbrace{\qty(\frac{1}{n}\sum\limits_{i=1}^{n}p(x, Z_i^{\pm}, e))}^{p_{mb}} \cdot e.
\end{equation*}
Let us estimate the scaling of its variance $\E_e \E_Z \norm{\hat{g}_{mb} - \nabla f}^2$ with the noise level $\omax$.
As $\E_{Z}\hat{p}_{mb} \approx \frac{f(x + te) - f(x - te)}{2t} \approx \langle\nabla f, e\rangle$ we would like to estimate the following for any fixed direction $e$:
\begin{align}
    \label{eq:dir_gmb_var}
    \E_{Z}\bigr[p_{mb}(x) - \langle\nabla f, e\rangle
    \bigl]^2
    \approx
    \frac{1}{t^2}\E_{Z}\Bigl[
    \frac{1}{n}\sum\limits_{i=1}^{n} F(x + te, Z_i^{+}) - f(x + te) \Bigr]^2
    \overset{\eqref{lem:tech_markov}}{\approx}
    \frac{\tau}{n}\frac{\omax}{t^2} \,.
\end{align}
With that, we bound the variance:
\begin{gather}
    \label{eq:minibatch_variance}
    \E_e \E_Z \norm{\hat{g}_{mb} - \nabla f}^2 
    \geq 
    \E_e \E_Z \norm{\hat{g}_{mb} - \E_{Z}\hat{g}_{mb}}^2
    =
    \E_e \E_Z \norm{d \cdot \qty[p_{mb} - \E_{Z}\qty[p_{mb}]] \cdot e}^2
    =
    \\ \notag
    d^2 \E_e \E_Z \qty| p_{mb} - \langle\nabla f, e\rangle |^2 \overset{\eqref{eq:dir_gmb_var}}{\approx}
    \frac{d^2\tau\omax}{nt^2}.
\end{gather}
\textbf{Can the mini-batching scheme be improved?}

This subsection explores an unexpected source of improvement that contradicts our initial hypothesis.
Specifically, we identify an inefficiency in the current use of samples $Z_i$, which becomes evident from two perspectives.
Equation \eqref{eq:minibatch_variance} shows the variance scales as $\frac{\tau}{n}$. 
If we could reduce $\tau$ by a factor of $k$, we would need $k$-times fewer samples to maintain the same variance.
This leads us to the idea of sparsified sampling.
We partition the Markov noise chain $\{Z_i\}$ into $k$ subchains $\{ Z_{k \cdot i + r} \}$ for $r = 0 \dots k - 1$.
This corresponds to a mixing time of $\lceil \frac{\tau}{k} \rceil$ for each subchain (see \eqref{as:mixing_time}), effectively reducing temporal correlation - a natural consequence of sampling every $k$-th element of the original chain.
Thus, sampling from any single subchain could yield a $\min(k, \tau)$-fold reduction in the number of samples needed (although such procedure would still require all intermediate oracle calls, yielding no computational speedup).

For a concrete illustration of that inefficiency, consider a lazy Markov chain that remains in the same state for (an average of) $\tau$ steps before transitioning uniformly at random.
In such a case, all oracle queries $F(x,Z)$ for a fixed $x$ return the same value for $\tau$ consecutive steps.
Therefore, retaining only every $\tau$-th estimate $\hat{g}$ would yield a mini-batch of equivalent quality.

In summary, we observe that the mini-batching scheme could, in principle, operate just as effectively by retaining only every $k$-th sample and discarding the rest.
This might suggest that better utilization of the samples is possible.
First order methods, nevertheless, are unable to exploit this redundancy (as shown by \cite{beznosikov2024first}'s lower bound) and are effectively forced to wait out the $\tau$-step mixing window.
In contrast, we can exploit this structure by querying finite differences along different directions to estimate the gradient better. 
Specifically, we construct $d$ subchains, where $r$-th subchain $Z_{d \cdot i + r}$ is used for oracle calls along $r$-th coordinate: $\frac{F(x + t e_r, Z) - F(x - t e_r, Z)}{2t}$. Thus the full gradient is restored coordinate-wise.

Let us estimate the resulting variance reduction.
First, we achieve a $d$-fold reduction by reconstructing all $d$ gradient coordinates.
Second, each coordinate now operates on a chain with mixing time $\lceil \frac{\tau}{d} \rceil$, yielding an additional factor of $\min(d, \tau)$.
However, because batches are now split across $d$ coordinates, each batch is $d$ times smaller than before, introducing a factor of $d$ loss. 
The net variance reduction is therefore $\min(d, \tau)$, and the final scaling becomes $d \cdot \frac{d \tau}{\min(d, \tau)} = d \cdot \max(d, \tau) \simeq d (d + \tau)$.

\textbf{Random directions}

This insight can be extended to a simpler yet equally effective method. Instead of assigning directions deterministically, we associate each sample with a random direction $e \in RS^d_2$, forming the estimator:
\begin{align*}
    \hat{g}_{rd}[n](x, Z, e) = \frac{1}{n}\sum\limits_{i=1}^{n}\hat{g}(x, Z_i, e_i).
\end{align*}

While the above discussion was intuitive, we now outline a more formal approach (see \Cref{lem:main_ineq} for details).
As lazy Markov chain is effectively equivalent to stochastic i.i.d. $\tau$-point feedback setting, we follow Corollary 2 of \cite{duchi2015optimal}, which decomposes the total variance into two terms:
\begin{equation*}
    \E \norm{\hat{g}_{rd} - \nabla f(x)}^2 
    \leq
    2 \E \norm{\hat{g}_{rd} - \E_e \hat{g}_{rd}}^2 + 2 \E \norm{\E_e \hat{g}_{rd} - \nabla f(x)}^2.
\end{equation*}
Each of the two terms individually eliminates one factor from the $d^2 \tau$ dependence.

The first term:
\begin{align*}
    \E \norm{\hat{g}_{rd} - \E_e \hat{g}_{rd}}^2 
    = 
    \E_Z \E_e \norm{\frac{1}{n} \sum\limits_{i=1}^{n} \underbrace{\qty[\hat{g}(x, Z_i, e_i) - E_{e_i} \hat{g}(x, Z_i, e_i)]}_{\E_e [\cdot] = 0, \text{ independent w.r.t. } e}}^2
    =
    \frac{1}{n^2} \sum\limits_{i=1}^{n} \E \norm{\hat{g}(x, Z_i, e_i) - \E_{e_i} \hat{g}(x, Z_i, e_i)}^2
\end{align*}
is independent of $\tau$ since \Cref{as:noise} bounds each term directly.

For the second term, we observe that $\E_e \hat{g}_{rd} = \E_e \hat{g}_{mb}$, and thus the bound involves $\E \norm{\E_e \hat{g}_{mb} - \nabla f(x)}^2$. This is crucially different from the $d^2 \tau$ dependence that appeared in the mini-batch case, when we considered $\E \norm{\hat{g}_{mb} - \nabla f(x)}^2$.
Intuitively, the expectation over directions helps recover the full gradient rather than a directional component, thereby reducing variance with respect to $d$.

\textbf{Multilevel Monte Carlo}

The estimator $\hat{g}_{rd}$ is not our final construction.
While it controls variance, the temporal correlation in noise may introduce significant bias.
A well-established approach to mitigating this is MLMC, widely used in the statistical literature \citep{glynn2014exact, giles08multilevel}, and more recently in gradient optimization \citep{dorfman2022adapting, beznosikov2024first}.
Here is our interpretation.

With parameters $J, l, M, B$ from \Cref{tab:sgd_params}, $\{Z_i\}$ - $2^{J} l$ samples from $Z$ and $\{e_i\}$ - random directions we introduce MLMC estimator:
\begin{gather*}
    \hat{g}_{ml}(x) = \hat{g}_{rd}[l](x) +
    \begin{cases}
        \textstyle{2^{J} \left[ \hat{g}_{rd}\qty[2^J l](x)  - \hat{g}_{rd}\qty[2^{J - 1} l](x) \right]}, & \text{ if } 2^{J} \leq \batchbound \\
        0, & \text{ otherwise.}
    \end{cases}
\end{gather*}

To easy understanding of the formula above consider an example with $l = 1, M = \infty, B = 1$ and enumerate base estimates as $g_1, g_2, \dots$. Then, the MLMC estimate will be $\hat{g}_{ml} = g_1$ with prob. $1/2$, $g_1 + (g_3 - g_2)$ with prob. $1/4$, $g_1 + (g_4 + g_5 - g_2 - g_3)$ with prob. $1/8$ and so on. Parameter $M$ is the upper bound on the number of estimates used. Parameter $l$ transforms the base estimator into a sequence of $l$ base estimators, effectively stretching everything $l$ times. Finally, $B$ serves as a hyperparameter that can multiplicatively increase $l$.
$\hat{g}_{ml}$ is our final gradient estimator, with the following guarantees:

\begin{lemma} [for one-point]
    \label{lem:expect_bound_grad}
    Let \Cref{as:lipsh_grad,as:mixing_time,as:noise} hold. For any initial distribution\footnote{
        Note that $\hat{g}_{ml}$ (specifically $Z_1$) indirectly depends on the chain's initial distribution.
        As our algorithm is going to repeatedly call $\hat{g}_{ml}$, next iteration's initial distribution is current iteration's final distribution.
        This fact makes the estimates correlated.
        We sidestep this problem by assuming any initial distribution.
    }
    $\xi$ on $(\Zset,\Zsigma)$ the gradient estimates $\hat{g}_{ml}$ satisfy $\E[\hat{g}_{ml}] = \E\qty[\hat{g}_{rd}\qty[2^{\lfloor \log_2 \batchbound \rfloor} l]]$. Moreover,
    \begin{align*}
        \E\norm{ \nabla f_t(x) - \hat{g}_{ml}(x)}^2 
        \lesssim
        \frac{d\norm{ \nabla f(x) }^2}{B} +
        \frac{d ^2 L^2 t^2}{B} +
        \frac{d \left( d + \tau \right) \omax}{Bt^2}
        \,,\quad
        \norm{ \nabla \avgf(x) - \E[\hat{g}_{ml}(x)]}^2 
        \lesssim
        \frac{d \tau \omax}{t^2 BM}
        \,.
    \end{align*}
\end{lemma}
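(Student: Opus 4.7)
The plan is to split the proof into three ingredients matching the three assertions: an expectation identity, a bias estimate, and a variance estimate. Throughout, let $J_{\max} := \lfloor \log_2 M \rfloor$ and $\bar{g}_n := \mathbb{E}[\hat{g}_{rd}[n](x)]$. \textbf{Expectation identity.} The random level $J$ in the MLMC sampler is geometric, $\mathbb{P}(J = j) = 2^{-j}$ for $j \geq 1$, and the indicator $\mathbbm{1}\{2^J \leq M\}$ truncates the correction at $j = J_{\max}$. Conditioning on $J$ and integrating out the Markov samples and random directions, for $j > J_{\max}$ the contribution is $\bar{g}_l$ and for $j \leq J_{\max}$ it is $\bar{g}_l + 2^{j}(\bar{g}_{2^j l} - \bar{g}_{2^{j-1}l})$. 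Weighting by $2^{-j}$ telescopes the correction:
$$\mathbb{E}[\hat{g}_{ml}] = \bar{g}_l + \sum_{j=1}^{J_{\max}} \bigl(\bar{g}_{2^j l} - \bar{g}_{2^{j-1}l}\bigr) = \bar{g}_{2^{J_{\max}} l},$$
which is the first claim.

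\textbf{Bias bound.} Combining the identity above with the standard smoothing relation $\mathbb{E}_{Z\sim\pi}\mathbb{E}_e[\hat{g}(x,Z,e)] = \nabla f_t(x)$ (which follows from $\mathbb{E}_e[e\cdot p(x,Z,e)] = d^{-1}\nabla f_t(x)$), the bias reduces to the discrepancy between a Markovian sample average and its stationary expectation. I would bound this by applying \Cref{lem:tech_markov} to each coordinate of the averaged zero-order estimator and then integrating over $e$. Since the top level effectively uses $N \simeq BMl$ consecutive samples, the announced $d\tau\omax/(t^2 BM)$ rate follows.

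\textbf{Variance bound.} Write $\mathbb{E}\|\nabla f_t - \hat{g}_{ml}\|^2 = \|\nabla f_t - \mathbb{E}\hat{g}_{ml}\|^2 + \mathbb{V}[\hat{g}_{ml}]$; the first term is controlled by the bias estimate. For the variance, conditioning on $J$ and using independence of the base samples from those used in the corrections,
$$\mathbb{V}[\hat{g}_{ml}] \lesssim \mathbb{V}[\hat{g}_{rd}[l]] + \sum_{j=1}^{J_{\max}} 2^{-j}\, \mathbb{E}\|Y_j\|^2, \qquad Y_j := 2^j\bigl(\hat{g}_{rd}[2^j l] - \hat{g}_{rd}[2^{j-1}l]\bigr).$$
In the coupling where the coarse estimator reuses the first half of the fine samples, $Y_j$ collapses to $2^{j-1}$ times the difference of two averages over $2^{j-1}l$ consecutive Markov samples. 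I would control the variance of each such average via the key decomposition from \Cref{sec:batching},
$$\mathbb{E}\|\hat{g}_{rd}[n] - \nabla f_t\|^2 \leq 2\,\mathbb{E}\|\hat{g}_{rd}[n] - \mathbb{E}_e\hat{g}_{rd}[n]\|^2 + 2\,\mathbb{E}\|\mathbb{E}_e\hat{g}_{rd}[n] - \nabla f_t\|^2,$$
in which the first summand is $\tau$-free (the $e_i$ are i.i.d., so the direction-randomness averages as in the i.i.d. setting) and the second is $d$-free (integrating over $e$ recovers $\nabla f_t$ up to a Markov remainder handled by \Cref{lem:tech_markov}). Bounding the per-sample $e$-variance of $\hat{g}$ via $\mathbb{E}_e ee^\top = d^{-1}I$ yields the $d\|\nabla f(x)\|^2$ contribution, the second-order error of the symmetric finite difference under \Cref{as:lipsh_grad} yields the $d^2 L^2 t^2$ term, and the Markov piece yields the $d(d+\tau)\omax/t^2$ term. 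Summing the level contributions and collecting the factor $1/B$ from the base batch being stretched by $B$ produces the announced three-term bound.

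\textbf{Main obstacle.} The delicate step is controlling $\mathbb{E}\|Y_j\|^2$ uniformly in $j$ so that $\sum_j 2^{-j}\mathbb{E}\|Y_j\|^2$ collapses to the same order as the base-level variance. A naive application of \Cref{lem:tech_markov} to $Y_j$ picks up an extra $\tau$ factor at every level; the direction-versus-sample splitting, together with the coupled-halves structure of the MLMC, is precisely what converts the $d^2\tau$ scaling suggested by the hypothesis into the $d(d+\tau)$ scaling claimed in the lemma.
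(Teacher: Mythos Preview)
Your proposal is correct and follows essentially the same route as the paper: the telescoping expectation identity, the bias bound at the top level $2^{J_{\max}}l \gtrsim BM$ via (the extended) \Cref{lem:tech_markov}, and the level-wise second-moment bound using the direction-vs-noise splitting (packaged in the paper as \Cref{lem:main_ineq}, specifically \eqref{eq:markov_variance} and \eqref{eq:total_bias}), with the $\lfloor \log_2 M\rfloor$ many levels absorbed by $l = (\lfloor \log_2 M\rfloor+1)B$. One caveat: your appeal to ``independence of the base samples from those used in the corrections'' is not correct---in the paper's construction the levels share samples, and even disjoint blocks would be Markov-correlated---but your displayed inequality for $\mathbb{V}[\hat{g}_{ml}]$ holds regardless via $\|a+b\|^2 \le 2\|a\|^2+2\|b\|^2$, which is exactly how the paper proceeds (it centers at $\hat{g}^0$ rather than at $\mathbb{E}\hat{g}_{ml}$, giving the same sum $\sum_j 2^j\,\mathbb{E}\|\hat{g}^j-\hat{g}^{j-1}\|^2$).
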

One can note that although $\hat{g}_{ml}$ requires on average $l\log_2 M \simeq \log^2_2 M \cdot B$ oracle calls, the variance is only reduced by a factor of $B$. In contrast, the bias is reduced significantly -- by a factor of $BM$.

\subsection{Algorithm}
\begin{table}[htp]
    \centering
    \caption{Parameters of \Cref{alg:ZOM_ASGD}}
    \begin{tabularx}{\textwidth}{llXllXll}
        \toprule
        \multicolumn{2}{c}{Hyperparameters} & & \multicolumn{2}{c}{Momentums} & & \multicolumn{2}{c}{Batch hidden parameters} \\
        \cmidrule{1-2}
        \cmidrule{4-5}
        \cmidrule{7-8}
        $\gamma$ & Stepsize, $\in (0; \tfrac{3}{4L}]$ & &
        $\beta$ & $\sqrt{\frac{4 p^2 \mu \gamma}{3}}$ & &
        $2^Jl$ & Batch size. If $2^J > M$, then $0$
        \\ \addlinespace[0.1em]

        $t$ & Approximation step & &
        $\eta$ & $
        \sqrt{\frac{3}{\mu \gamma}}$ & &
        $J$ & Random, $J\sim\text{Geom}(\nicefrac{1}{2})$
        \\ \addlinespace[0.1em]

        $B$ & Batch size multiplier & &
        $\theta$ & $\frac{p \eta^{-1} - 1}{\beta p \eta^{-1} - 1}$ & &
        $M$ & Batch size limit, $M = \frac{1}{p} + \frac{2}{\beta}$
        \\ \addlinespace[0.1em]

        $N$ & Number of iterations & &
        $p$ & const or\footnotemark[2] $\frac{B}{B + d}$ & &
        $l$ & $\left( \lfloor\log_2 M\rfloor + 1 \right) \cdot B$
        \\ 
        \bottomrule
    \end{tabularx}
    \label{tab:sgd_params}
\end{table}
\footnotetext[2]{in one- or two-point regime respectively}

We now present the full version of \Cref{alg:ZOM_ASGD}, which incorporates the gradient estimators discussed in the previous section and uses a slightly modified variant of Nesterov’s Accelerated Gradient Descent at its core.

\begin{wrapfigure}{r}{0.55\textwidth}
    \begin{minipage}[]{0.55\textwidth}
        \begin{algorithm}{Randomized Accelerated ZO GD}
            \label{alg:ZOM_ASGD}
            \begin{algorithmic}[1]
                \State {\bf Initialization:} $x^0_f = x^0$; see \Cref{tab:sgd_params}.
                \For{$k = 0, 1, 2, \dots, \nbiter-1$}
                \State $x^k_g = \theta x^k_f + (1 - \theta) x^k$ \label{line_acc_1}
                \State Sample $J, \{e_i\}, \left\{F(x^k_g \pm t e_i, Z_i^{(\pm)})\right\}$
                \State Calculate $\hat{g}^k = \hat{g}_{ml}(x^k_g)$
                \State    $\textstyle{x^{k+1}_f = x^k_g - p\gamma \hat{g}^k}$ \label{line_acc_2}
                \State    $\textstyle{x^{k+1} = \eta x^{k+1}_f + (p - \eta)x^k_f} +$
                \newline \hspace*{4em} $\textstyle{+ (1- p)(1 - \beta) x^k +(1 - p) \beta x^k_g}$ \label{line_acc_3}
                \EndFor
            \end{algorithmic}
        \end{algorithm}
    \end{minipage}
\end{wrapfigure}

While technically we prove four separate upper bounds covering both one- and two-point feedback under smooth and non-smooth assumptions, they follow the same scheme which we will illustrate in the one-point smooth case.

\Cref{lem:smoothing_properties} establishes key properties of the smoothed objective function.
\Cref{lem:main_ineq} provides bounds on the bias and variance of the baseline estimator $\hat{g}_{rd}$.
\Cref{lem:expect_bound_grad} then quantifies how the MLMC scheme amplifies or reduces these statistics.
Finally, in \Cref{sec:proof:th:strongly_convex_upper_bound}, we combine the results of these lemmas to prove the first part of \Cref{th:acc}, bounding \Cref{alg:ZOM_ASGD}'s error.
By tuning the parameters appropriately, we obtain the following iteration complexity bound:


\begin{theorem}
    \label{th:acc}
    Let \Crefrange{as:lipsh_grad}{as:noise} hold, and consider problem \eqref{eq:erm} solved by \Cref{alg:ZOM_ASGD}. Then, for any target accuracy $\varepsilon$ and batch size multiplier $B$ (see \Cref{tab:definitions,tab:sgd_params} for notation), and for a suitable choice of $\gamma, t, p$, the number of oracle calls required to ensure $\E \|x^N - x^*\|^2 \leq \varepsilon$ is bounded by
    \begin{equation*}
        {
            B \cdot \mathcal{\tilde O} \left(\max\Bigl[1, \frac{d}{B}\Bigr] \sqrt{\frac{L}{\mu}}\log \frac{1}{\varepsilon} + \frac{L d \left( d + \tau \right) \omax}{B \mu^{3} \varepsilon^2}  \right)
        } \quad \text{one-point oracle calls}\,.
    \end{equation*}
\end{theorem}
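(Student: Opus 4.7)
The plan is to run a Lyapunov analysis of \Cref{alg:ZOM_ASGD} on the smoothed surrogate $f_t$, plug in the gradient-estimator bounds of \Cref{lem:expect_bound_grad}, and tune $(\gamma, t, p, N)$ to match the claimed complexity. The first conceptual move is that $\hat g_{ml}$ concentrates around $\nabla f_t$ and not $\nabla f$, so the descent argument is carried out with $f_t$ as the target, and conversion back to $f$ at the end uses \Cref{lem:smoothing_properties}, which guarantees that $f_t$ inherits $L$-smoothness and $\mu$-strong convexity up to constants and satisfies $\|\nabla f_t - \nabla f\| \lesssim L t$ and $|f_t - f| \lesssim L t^2$.

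The single-iteration analysis follows the standard accelerated estimating-sequence template, adapted to a biased oracle. Writing $\mathcal F_k$ for the filtration before step $k$ and $\bar g^k := \E[\hat g^k \mid \mathcal F_k]$, the momentum identities encoded by $(\beta,\eta,\theta)$ in \Cref{tab:sgd_params} should yield a recursion
\begin{equation*}
    \E[\Psi^{k+1}\mid \mathcal F_k] \le (1-\rho)\,\Psi^k + a_1\gamma^2\,\|\hat g^k-\bar g^k\|^2 + a_2\gamma\,\|\bar g^k-\nabla f_t(x^k_g)\|^2 + a_3\,\|\nabla f_t - \nabla f\|^2,
\end{equation*}
with $\Psi^k$ a convex combination of $\|x^k-x^*\|^2$ and $f_t(x^k_f)-f_t(x^*)$ and contraction rate $\rho\simeq\beta\simeq p\sqrt{\mu/L}$. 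The delicate point is that the variance bound of \Cref{lem:expect_bound_grad} contains a term proportional to $\|\nabla f(x^k_g)\|^2$, which under smoothness is $\lesssim L(f(x^k_g)-f^*)$; this term must be absorbed into the Lyapunov decrement rather than treated as pure noise. Routing it through the small stepsize $p\gamma$ on line \ref{line_acc_2} is precisely what is enabled by the choice $p\simeq B/(B+d)$ — it buys a $1/p$-reduction of the offending term at the price of shrinking the contraction by the same factor.

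Substituting \Cref{lem:expect_bound_grad} and unrolling over $N$ steps gives
\begin{equation*}
    \E\Psi^N\lesssim (1-\rho)^N\Psi^0 + \frac{1}{\rho}\Bigl(\frac{\gamma^2}{B}\bigl(d^2L^2t^2+\tfrac{d(d+\tau)\omax}{t^2}\bigr) + \frac{\gamma\,d\tau\omax}{B M t^2} + L^2t^2\Bigr).
\end{equation*}
Taking $\gamma\simeq 1/L$ makes $\rho\simeq p\sqrt{\mu/L}$, so $N\simeq (1/\rho)\log(1/\varepsilon) \simeq \max(1, d/B)\sqrt{L/\mu}\log(1/\varepsilon)$ iterations kill the initial-condition term. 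Balancing $d^2 L^2 t^2$ against $d(d+\tau)\omax/t^2$ selects $t^2\simeq\sqrt{(d+\tau)\omax/(d L^2)}$ and produces a stationary floor of order $\sqrt{d(d+\tau)L\omax/B}$; requiring this, after division by $\rho$, to be $\lesssim \mu\varepsilon$ yields the stationary complexity term $L d(d+\tau)\omax/(B\mu^3\varepsilon^2)$. The MLMC bias is smaller by a factor of $M\simeq 1/\sqrt{\mu\gamma}$ and is absorbed into $\tilde{\mathcal O}$, together with the $\log^2 M$ average per-iteration oracle-cost factor. Multiplying $N$ by $B$ then gives the stated bound.

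The main obstacle is the single-iteration descent inequality. Standard accelerated-SGD analyses presume an unbiased oracle whose variance is pure noise; here both assumptions fail, so one must (i) route the bias $\|\bar g-\nabla f_t\|$ through Young's inequality without destroying the contraction and (ii) absorb the $\|\nabla f\|^2$-component of the variance into the $f_t$-part of the Lyapunov function, which is what the particular $p$-weighted momentum schedule is engineered to accommodate. Making the three-sequence bookkeeping close with these two modifications while preserving the accelerated rate $\beta\simeq \sqrt{\mu/L}$ is the calculational heart of the proof.
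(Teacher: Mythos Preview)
Your high-level plan matches the paper: run the accelerated Lyapunov analysis on the surrogate $f_t$, insert the bias/variance bounds of \Cref{lem:expect_bound_grad}, kill the $\|\nabla f\|^2$-dependent part of the variance via the choice $p\simeq B/(B+d)$, unroll, convert back to $f$ with \Cref{lem:smoothing_properties}, and tune. The paper does exactly this, borrowing two one-step inequalities from \citet{beznosikov2024first} for the three-sequence bookkeeping; the $\|\nabla f\|^2$-absorption is implemented through $\|\nabla f\|^2\le 2\|\nabla f_t\|^2+2L^2t^2$ and your observation that the $p$-weighted step annihilates the net $\|\nabla f_t\|^2$ coefficient.

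The gap is in the tuning of $t$. Converting the Lyapunov from $f_t$ to $f$ (equivalently, comparing $\arg\min f_t$ with $x^*=\arg\min f$) contributes a one-time additive term $Lt^2/\mu$ that is \emph{not} accumulated over iterations, not divided by $\rho$, and not scaled by $\gamma$; it is a hard floor that no stepsize or iteration count can remove. Forcing this floor below $\varepsilon$ requires $t^2\lesssim \mu\varepsilon/L$, and for small $\varepsilon$ this constraint is binding. Your balanced choice $t^2\simeq\sqrt{(d+\tau)\omax/(dL^2)}$ is \emph{independent of $\varepsilon$} and violates the constraint once $\varepsilon$ is small enough, so the algorithm would stall above a fixed level and the claimed $1/\varepsilon^2$ rate would have no source. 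The paper instead sets $t^2\simeq\mu\varepsilon/L$; then $d^2L^2t^2/B=O(\varepsilon)$ while $d(d+\tau)\omax/(Bt^2)\simeq Ld(d+\tau)\omax/(B\mu\varepsilon)$ dominates, and a Stich-type stepsize-tuning lemma (with $\gamma$ allowed to be $\ll 1/L$, not fixed at $1/L$ as in your sketch) converts this per-step noise level into the $Ld(d+\tau)\omax/(B\mu^3\varepsilon^2)$ iteration count. In short: the $1/\varepsilon^2$ comes from the smoothing bias forcing $t\to 0$ with $\varepsilon$, which in turn blows up the $\omax/t^2$ variance term --- not from balancing two variance contributions.
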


\begin{theoremprime}{th:acc}
    \label{th:acc_two}
    Let \Crefrange{as:lipsh_grad_two}{as:noise_two} hold, and consider problem \eqref{eq:erm} solved by \Cref{alg:ZOM_ASGD}. Then, for any target accuracy $\varepsilon$ and batch size multiplier $B$ (see \Cref{tab:definitions,tab:sgd_params} for notation), and for a suitable choice of $\gamma, t, p$, the number of oracle calls required to ensure $\E \| x^N - x^* \|^2 \leq \varepsilon$ is bounded by
    \begin{equation*} 
        {B \cdot \mathcal{\tilde{O}}\left(\max\Bigl[1, \frac{d}{B}\Bigr] \sqrt{\frac{L}{\mu}}\log \frac{1}{\varepsilon} + \frac{(d + \tau) \tmax}{B\mu ^ {2} \varepsilon} \right)} \quad \text{two-point oracle calls}\,.
    \end{equation*}
\end{theoremprime}

\textbf{Remark.} The \emph{iteration complexity} of the algorithm, i.e., the number of iterates $x^{k}$ generated (equal to the oracle complexity divided by $B$), is bound by $\mathcal{\tilde{O}} \left(\sqrt{\tfrac{L}{\mu}}\log \frac{1}{\varepsilon}\right)$ as the batch size multiplier $B$ goes to infinity. This matches the optimal convergence rates for optimization with \emph{exact} gradients \citep{nesterov1983method}.

\subsection{Lower bounds}
\label{sec:lower_bounds}
Here we present theorems demonstrating that no algorithm can asymptotically outperform \Cref{alg:ZOM_ASGD} in the smooth, strongly convex setting with either one- or two-point feedback.
\begin{theorem} (Lower bounds)
    For any (possibly randomized) algorithm that solves the problem \eqref{eq:erm}, there exists a function $f$ that satisfies \Crefrange{as:lipsh_grad}{as:noise} (\ref{as:lipsh_grad_two} to \ref{as:noise_two}), s.t.
    in order to achieve $\varepsilon$-approximate solution in expectation $\E \| x^N - x^* \| ^2 \leq \varepsilon$, the algorithm needs at least
    \begin{equation*}
        \Omega\left(\frac{d(d + \tau)\omax}{\mu^2 \varepsilon^2}  \right) \quad \text{one-point or} \quad \Omega \left( \frac{(d + \tau)\tmax}{\mu^2 \varepsilon} \right) \quad \text{two-point oracle calls}.
    \end{equation*}
\end{theorem}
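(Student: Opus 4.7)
The plan is to prove the two claimed rates simultaneously by exhibiting two separate hard instances that force the dimension-dependent and mixing-time-dependent terms respectively, and then combining them via $\max(a,b) \geq (a+b)/2$ to obtain the additive behavior $d+\tau$ and $d(d+\tau)$.

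\textbf{Dimension-only lower bound.} First I would observe that any i.i.d. noise sequence trivially satisfies \Cref{as:mixing_time} with $\tau=1$, so existing information-theoretic lower bounds for stochastic zero-order optimization transfer to our setting at no cost: the two-point bound $\Omega(d\tmax/(\mu^2\varepsilon))$ of \citet{duchi2015optimal} and the one-point bound $\Omega(d^2\omax/(\mu^2\varepsilon^2))$ of \citet{akhavan2020exploiting}. These yield the first half of each claimed rate.

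\textbf{Mixing-time lower bound.} Next I would construct a ``lazy'' Markov chain that stays in its current state for roughly $\tau$ consecutive steps before transitioning uniformly, exactly as in the motivating example of \Cref{sec:batching}. One checks that this chain satisfies \Cref{as:mixing_time} with the prescribed mixing time $\tau$, while a trajectory of $N$ queries contains only $\sim N/\tau$ statistically distinct noise realizations. For the two-point case, coupling this chain with a one-dimensional strongly convex quadratic---where two-point feedback is information-theoretically equivalent to first-order access---reduces to the Markov first-order lower bound $\Omega(\tau/(\mu^2\varepsilon))$ of \citet{beznosikov2024first}. For the one-point case, I would use a product of $d$ one-dimensional hard instances driven by coupled lazy chains, so that resolving each coordinate to the required accuracy demands $\Omega(\tau/\varepsilon^2)$ distinct samples, totaling $\Omega(d\tau/(\mu^2\varepsilon^2))$.

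\textbf{Main obstacle.} The delicate step is turning the informal ``effective sample count is $N/\tau$'' into a rigorous bound that tolerates \emph{adaptive} algorithms. I would couple the lazy chain to an auxiliary Bernoulli process marking the transition events; within each stuck segment the queries are deterministic functions of a single underlying noise sample, so a data-processing inequality caps the Kullback-Leibler divergence between posteriors over a finite packing of hard instances by $O(1)$ per segment, independent of segment length. Plugging this contraction into Fano's or Le Cam's inequality yields the desired factor-of-$\tau$ slowdown. Once this reduction is established, the remaining arithmetic follows the i.i.d. zero-order lower-bound derivations of \citet{duchi2015optimal} and \citet{akhavan2020exploiting} essentially verbatim, and taking the maximum of the two bounds completes the proof.
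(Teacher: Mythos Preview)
Your decomposition into a dimension-only term (via i.i.d.\ noise and the existing bounds of \citet{duchi2015optimal,akhavan2020exploiting}) and a mixing-time term (via a lazy chain), combined by taking the maximum, is exactly the paper's strategy; the two-point $\tau$-term reduction to \citet{beznosikov2024first} also matches.

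Two points of comparison are worth making. First, for the one-point $d\tau$ term you sketch a ``product of $d$ one-dimensional hard instances'' and invoke Fano or Le Cam. The paper instead applies Assouad's lemma directly to a single $d$-dimensional family $f_\omega(x)=\tfrac{\mu}{2}\|x\|^2+\langle S(x),\omega\rangle$, $\omega\in\{\pm1\}^d$, with a lazy Gaussian chain $Z_{t+1}=\xi_{t+1}$ w.p.\ $1/\tau$ and $Z_t$ otherwise; conditioning on the (Bernoulli) transition times yields exactly the ``$N/\tau$ effective samples'' KL bound you anticipate, without a separate data-processing step. Your per-coordinate accounting (``each coordinate needs $\Omega(\tau/\varepsilon^2)$'') is intuitively right but not by itself rigorous, since a single zero-order query mixes all coordinates through $\langle S(x),Z+\omega\rangle$; Assouad is precisely the tool that makes the product intuition valid here. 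The paper also has to construct $S$ explicitly (a smoothed-bump, separable map) to simultaneously meet the smoothness, strong convexity, and variance constraints---this is routine but not automatic.

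Second, you do not mention the gap between the second-moment noise bounds that underlie all the cited i.i.d.\ lower bounds and the \emph{uniform} bounds in \Cref{as:noise,as:noise_two}. The paper handles this separately by clipping the oracle output at level $\sigma\log N$, arguing the induced bias is polynomially negligible; this costs only log factors but is needed for the statement as written.
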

\textbf{Remark.} These results assume bounded second moments rather than uniform noise bounds. We explain how to adapt them to our setting, incurring only logarithmic overheads, in \Cref{sec:clipping_appendix}.

\textbf{Discussion.} 
We now compare our results to existing work. \citet{akhavan2024gradient} analyze a special case of the one-point setting where the noise is independent of the query points. This aligns with our one-point oracle model and allows i.i.d. sampling as a Markov chain with fixed mixing time $\tau = 1$. The only factor they do not consider is $\omax$, which, however, appears in their proof with additional $\mu^2$ factor if used with scaled Gaussian noise. We discuss this further in \Cref{sec:lower_bound_appendix}.

In the work of \citet{beznosikov2024first}, a first-order Markovian oracle is considered, but the hard instance problem is a one-dimensional quadratic function, which makes first-order and zero-order information equivalent. Their result therefore corresponds to the $d = 1$ case in the two-point regime. \citet{duchi2015optimal} provide tight lower bounds for general convex functions under two-point feedback. Their techniques can be extended to the strongly convex case by incorporating a shared quadratic component across the hard instances, as detailed in \Cref{sec:lower_bound_appendix}, \Cref{th:lower_bound_2p_high_dim}, yielding the bound we state for the two-point oracle with $\tau = 1$.

Our novel contribution lies in establishing a lower bound that scales as $d \tau$ in the one-point regime for large $\tau$; see \Cref{th:lower_bound_1p_high_cor}. While our analysis relies on classical tools such as multidimensional hypothesis testing, the Markovian structure requires new bound on distances between joint distributions and the use of clipping. Detailed proofs, discussions, and further remarks on clipping appear in \Cref{sec:lower_bound_appendix}.

\section{Experiments}
\label{sec:experiments}

This section empirically supports our theoretical convergence rates and lower bounds, with particular focus on the stochastic component where we claim linear scaling in $d + \tau$ instead of $d\tau$.

\begin{figure}[H]
    \begin{minipage}{0.33\textwidth}
        \begin{tikzpicture}
            \draw (0, 0) node[inner sep=0]{\includegraphics[width=\linewidth,trim={2cm 0cm 0cm 2cm},clip]{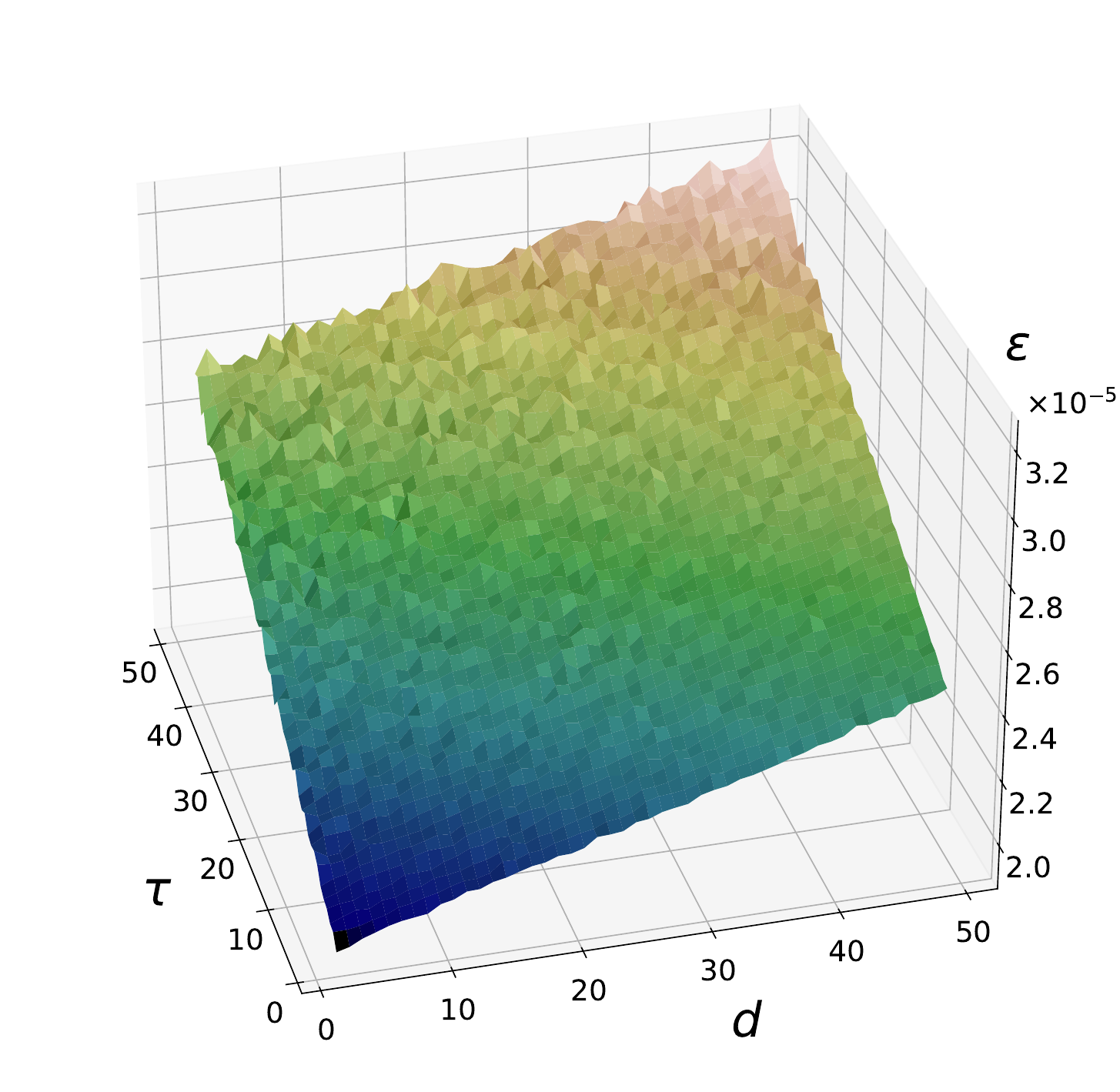}};
            \draw (-1.7, 3) node {$\tmax = 10^{-3}$};
        \end{tikzpicture}
        \vspace*{-0.7cm}
        \captionof*{figure}{(a)}
    \end{minipage}%
    \hspace{-0.1cm}
    \begin{minipage}{0.33\textwidth}
        \begin{tikzpicture}
            \draw (0, 0) node[inner sep=0]{\includegraphics[width=\linewidth,trim={2cm 0cm 0cm 2cm},clip]{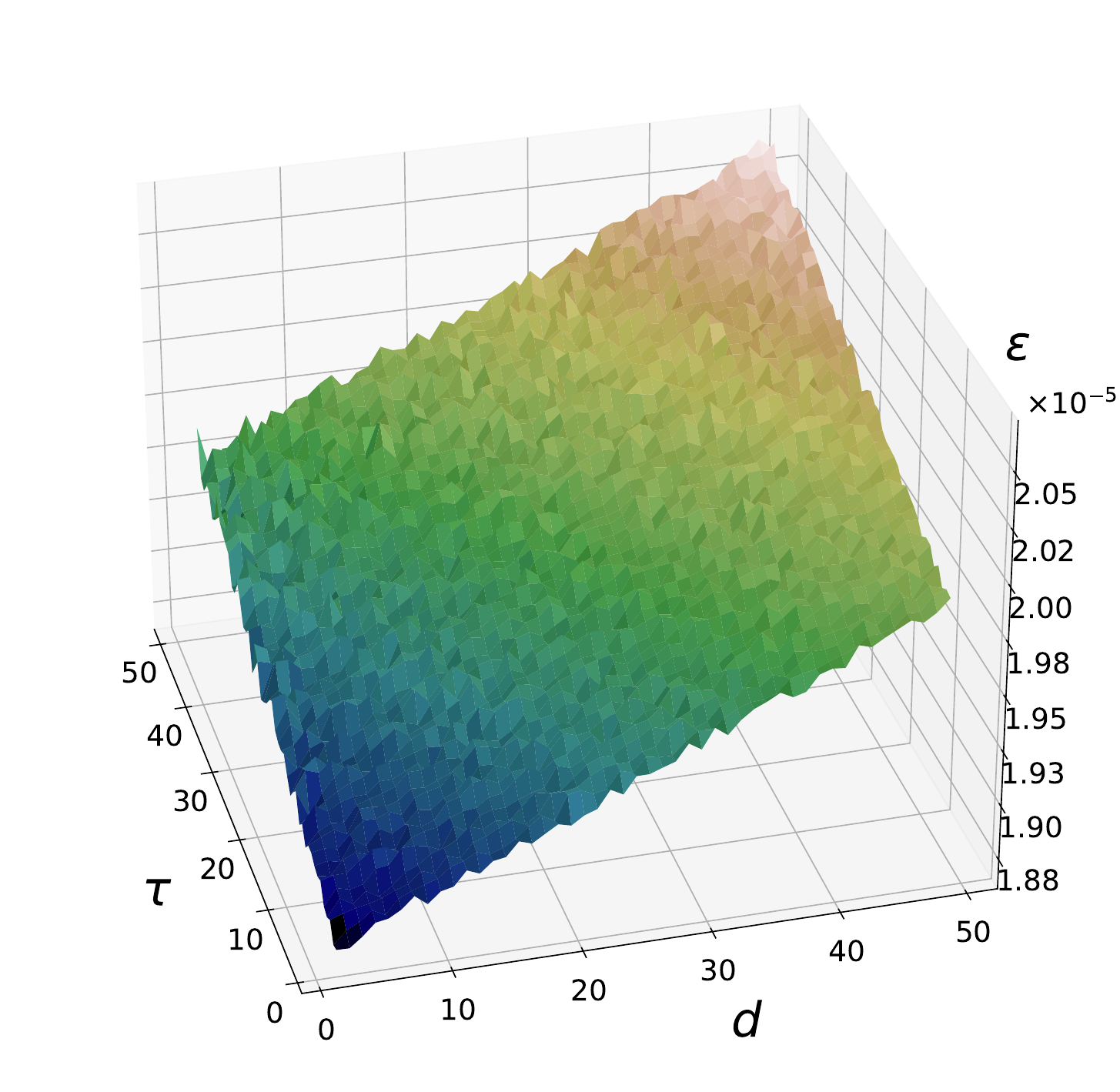}};
            \draw (-1.7, 3) node {$\tmax = 10^{-4}$};
        \end{tikzpicture}
        \vspace*{-0.7cm}
        \captionof*{figure}{(b)}
    \end{minipage}%
    \hspace{-0.1cm}
    \begin{minipage}{0.33\textwidth}
        \begin{tikzpicture}
            \draw (0, 0) node[inner sep=0]{\includegraphics[width=\linewidth,trim={2cm 0cm 0cm 2cm},clip]{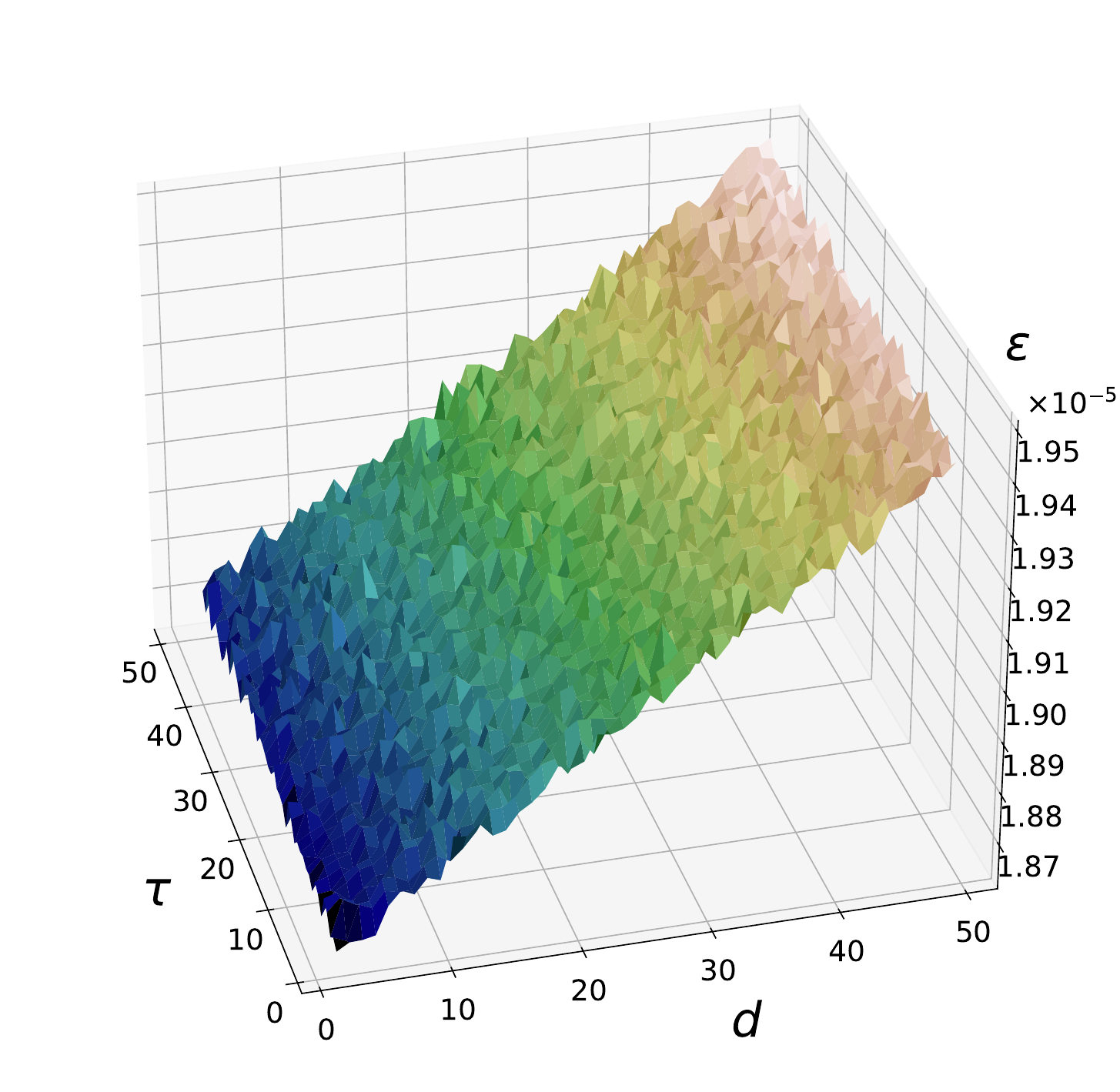}};
            \draw (-1.7, 3) node {$\tmax = 10^{-5}$};
        \end{tikzpicture}
        \vspace*{-0.7cm}
        \captionof*{figure}{(c)}
    \end{minipage}
    \captionof{figure}{Optimization error $\varepsilon = \|x^N - x^*\|^2$ after $N = 10^3$ iterations. Starting point error $\norm{x_0 - x^*}^2$ = $10^{-2}$. Stepsize $\gamma = 10^{-3}$, $t = 10^{-5}$. The results are averaged over $10^4$ runs.}
    \label{fig:convergence}
\end{figure}

\textbf{Setup.} Our setup repeats the problem we used to prove the lower bounds (see \Cref{sec:lower_bound_appendix} and \citep{shapiro2009lectures}). We consider a quadratic objective $f(x) = \tfrac{1}{2}\norm{x}^2$ and a two-point Markovian oracle $F(x, Z) = f(x) + \<x, Z>$. The noise sequence $\{Z_i\}$ is a lazily updated standard Gaussian vector with variance $\tmax$. \Cref{fig:convergence} illustrates how the optimization error of \Cref{alg:ZOM_ASGD} scales with mixing time, problem dimension, and different values of $\tmax$.

\textbf{Discussion.} The results confirm the linear dependence of the error on both the problem dimension $d$ and the mixing time $\tau$. The noise parameter $\sigma^2$ controls the influence of the stochastic part. In Fig. (a), where $\tmax = 10^{-3}$, the stochastic component dominates, while in Fig. (c), with $\tmax = 10^{-5}$, it is negligible. Fig. (b) shows an intermediate regime that smoothly interpolates between the two, yet maintains the linear scaling. The deterministic part (c) shows no dependence on mixing time, but grows linearly with $d$, which aligns with our theory (\Cref{th:acc_two}). The stochastic part (a) scales as $(d + \tau)$, also matching the bound from the \Cref{th:acc_two}.

}

\end{mainpart}

\bibliographystyle{plainnat}
\bibliography{main}

\begin{appendixpart}

{
\section{Appendix overview}
\label{sec:structure_appendix}
In this section, the overall structure of the technical appendices is presented.

In \Cref{sec:additional_appendix}, we introduce the additional adversarial robustness of the \Cref{alg:ZOM_ASGD} and present a formal statement of our results in the non-smooth case.

In \Cref{sec:notations_appendix}, we define the shorthanded notation used in the proof of upper bounds.

In \Cref{sec:1p_appendix,sec:2p_appendix}, we gradually introduce all lemmas and proofs of our theorems in one-point and two-point setting respectively, for both smooth and non-smooth problems.

In \Cref{sec:lower_bound_appendix} we present our lower bounds and provide a more detailed overview of the related results.

Finally, in \Cref{sec:basics_appendix}, we formally state the common-knowledge facts that we use.
\section{Additional results}
\label{sec:additional_appendix}
\subsection{Adversarial noise}
\label{sec:adversarial_appendix}
In addition to the main results that show optimal scaling with the stochastic noise, we also prove a \emph{robustness} of our algorithm. Precisely, the oracle $F$ considered in this paper may return its values with an additive, non-random, potentially adversarial error $\Delta(x) \leq \Delta$.
\begin{equation}
    \label{def:delta}
    \hat{F}(x, Z) = F(x, Z) + \Delta(x).
\end{equation}
We will prove that this have no effect of the convergence guarantees of our algorithm for any $\Delta$ within a tolerable threshold. This threshold varies between smooth and non-smooth case, but not between one-point and two-point settings. The precise bounds for $\Delta$ are presented in the theorems in \Cref{sec:1p_appendix,sec:2p_appendix}.

\subsection{Non-smooth}
\label{sec:nonsmooth_appendix}
In the non-smooth case, we consider a similar set of assumptions, however $f$ is no longer necessarily smooth or even differentiable.
\begin{assumption}
    \label{as:strong_conv_ns}
    The function $f$ is $\mu$-strongly convex on $\R^d$, i.e., there is a constant $\mu > 0$ such that the following inequality holds for all $x,y \in \R^d$ and $\lambda \in [0; 1]$:
    \begin{equation*}
        f(\lambda x + (1 - \lambda) y) \leq \lambda f(x) + (1 - \lambda) f(y) - \lambda (1 - \lambda) \frac{\mu}{2} \norm{x - y}^2
    \end{equation*}
\end{assumption}
\begin{assumption}
    \label{as:lipsh_f}
    The function $f$ is $G$-Lipschitz on $\R^d$, i.e., there is a constant $G > 0$ such that the following inequality holds for all $x,y \in \R^d$:
    \begin{equation*}
        |f(x) - f(y)| \leq G \norm{x-y}.
    \end{equation*}
\end{assumption}
Again, for the two-point case, we need the generalization:
\begin{assumptionprime}{as:lipsh_f}    
    \label{as:lipsh_f_two}
    For all $Z \in \Zset$ the function $F(\cdot, Z)$ is $G$-Lipschitz on $\R^d$.
\end{assumptionprime}

Regarding the noise levels, we keep \Cref{as:noise} for the one-point case.

For the two-point case, however, we cannot keep \Cref{as:noise_two}, as $f$ is no longer differentiable.
Instead, we will also use function unbiasedness.
In that case, we will not use any additional assumptions on noise variance, as gradient of the smoothed function is already bounded by $G$ as it is Lipschitz and differentiable.
\begin{assumption}
    \label{as:noise_two_ns}
    For all $x \in \rset^{d}$ it holds that $\E_{\pi}F(x, Z) = f(x)$. 
\end{assumption}

%

\begin{theorem}
    \label{th:acc_ns}
    Let \Cref{as:strong_conv_ns,as:lipsh_f,as:mixing_time,as:noise} hold, and consider problem \eqref{eq:erm} solved by \Cref{alg:ZOM_ASGD}.
    Then, for any target accuracy $\varepsilon$ and batch size multiplier $B$ (see \Cref{tab:definitions,tab:sgd_params} for notation), and for a suitable choice of $\gamma, t, p$, the number of oracle calls required to ensure $\E \|x^N - x^*\|^2 \leq \varepsilon$ is bounded by
    \begin{equation*}
        B \cdot \mathcal{\tilde{O}}\left(\sqrt{\frac{\sqrt{d}G^2}{\mu ^ 2 \varepsilon }}\log \frac{1}{\varepsilon} + \frac{d \left( d + \tau \right) \omax G^2}{B \mu ^ {4} \varepsilon^3 } + \frac{dG^2}{ B \mu^2 \varepsilon}\right) \quad \text{one-point oracle calls}\,.
    \end{equation*} 
\end{theorem}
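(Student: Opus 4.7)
The plan is to reduce the non-smooth problem \eqref{eq:erm} to a smooth surrogate via randomized smoothing, apply the machinery of \Cref{th:acc} to the surrogate, and translate the resulting guarantee back to $f$. Concretely, I would work with the smoothed objective $f_{t}(x) = \E_{r \sim RB^{d}_{2}}[f(x + tr)]$, which is exactly the function that the one-point estimator $\hat{g}$ already approximates (as noted in \Cref{sec:lb}) and which \Cref{lem:expect_bound_grad} is already phrased in terms of.

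The first block of work is collecting the properties of $f_{t}$: $f_{t}$ inherits $\mu$-strong convexity from $f$ since strong convexity is preserved under averaging; $f_{t}$ is $L$-smooth with $L \simeq \sqrt{d}\,G/t$ via the standard Stokes-type identity $\nabla f_{t}(x) = (d/t)\,\E_{e \sim RS^{d}_{2}}[f(x+te)\,e]$ combined with $G$-Lipschitzness; and $|f_{t}(x) - f(x)| \leq Gt$, which via strong convexity of $f$ and $f_t$ gives
\begin{equation*}
\|x - x^{*}\|^{2} \lesssim \|x - x^{*}_{t}\|^{2} + \frac{Gt}{\mu},
\end{equation*}
where $x^{*}_{t} = \arg\min f_{t}$. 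This translation dictates $t \simeq \mu\varepsilon/G$, so that an $\mathcal{O}(\varepsilon)$-approximate minimizer of $f_{t}$ is automatically an $\mathcal{O}(\varepsilon)$-approximate minimizer of $f$.

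With $f_{t}$ playing the role of the smooth objective, I would then re-run the analysis behind \Cref{th:acc}, using \Cref{alg:ZOM_ASGD} unchanged but reading $\hat{g}_{ml}$ as an estimator of $\nabla f_{t}$. The single structural change in \Cref{lem:expect_bound_grad} is that the gradient-magnitude term $\|\nabla f(x)\|^{2}$, which vanishes near the optimum in the smooth case, can now only be bounded uniformly as $\|\nabla f_{t}(x)\|^{2} \leq G^{2}$ from $G$-Lipschitzness. This introduces a persistent variance contribution of order $dG^{2}/B$. Substituting $L \simeq \sqrt{d}\,G/t$ and $t \simeq \mu\varepsilon/G$ into the smooth bound of \Cref{th:acc} turns the iteration term into $\sqrt{L/\mu}\log(1/\varepsilon) \simeq \sqrt{\sqrt{d}\,G^{2}/(\mu^{2}\varepsilon)}\,\log(1/\varepsilon)$ and the stochastic term into $d(d+\tau)\omax G^{2}/(B\mu^{4}\varepsilon^{3})$, while the persistent $dG^{2}/B$ variance propagates through the accelerated recursion to the extra $dG^{2}/(B\mu^{2}\varepsilon)$ summand appearing in the theorem.

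I expect the main obstacle to be exactly this non-contracting variance term $G^{2}$: the descent-lemma recursion in the smooth proof (\Cref{sec:proof:th:strongly_convex_upper_bound}) relies on $\|\nabla f(x)\|^{2} \to 0$ at the optimum of $f$, so closing the recursion under a persistent bounded-gradient noise requires choosing $p$ and $\gamma$ so that this noise contributes only an $\tilde{\mathcal{O}}(dG^{2}/(B\mu^{2}\varepsilon))$ oracle-call overhead without destroying the acceleration. The remaining pieces — a non-smooth analogue of \Cref{lem:main_ineq} for the baseline estimator $\hat{g}_{rd}$, the adversarial-noise robustness discussed in \Cref{sec:adversarial_appendix}, and the simultaneous tuning of $t$, $B$, $p$, $\gamma$ — are mechanical once this is in place, and follow the template of the smooth one-point proof.
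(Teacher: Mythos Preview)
Your overall plan—replace $f$ by the smoothed surrogate $f_t$, use that the one-point estimator is already unbiased for $\nabla f_t$, run the accelerated recursion of \Cref{sec:proof:th:strongly_convex_upper_bound} on $f_t$, and translate back via $|f_t-f|\le Gt$ with $t\simeq\mu\varepsilon/G$—is exactly what the paper does. The differences are in where you locate the difficulty and in one shortcut that does not quite work.

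First, the ``main obstacle'' you anticipate does not materialise. The paper does not carry a $\|\nabla f_t(x)\|^2$ term through the recursion and fight it with $p$ and $\gamma$; instead, the non-smooth analogue of \Cref{lem:main_ineq} bounds $\E\|g_i\|^2\lesssim dG^2$ \emph{directly}, via the spherical concentration inequality $\E_e|f(x+te)-\E_e f(x+te)|^2\lesssim G^2t^2/d$ (as in \citet[Lemma~9]{shamir2017optimal}). This single estimate replaces \emph{both} the $d^2L^2t^2$ term and the $d\|\nabla f\|^2$ term of the smooth \Cref{lem:main_ineq} by the constant $dG^2$, so the variance bound \eqref{eq:estimate_variance_ns} has no gradient-magnitude term at all. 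Consequently the cancellation that forced $p\simeq B/(B+d)$ in the smooth proof is unnecessary, and the paper simply takes $p\simeq 1$; the persistent $dG^2/B$ term then propagates as an ordinary additive variance, yielding the $dG^2/(B\mu^2\varepsilon)$ summand with no further work. What you call ``mechanical'' is in fact the one non-obvious ingredient, and what you flag as the obstacle is the part that disappears once that ingredient is in place.

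Second, your shortcut ``substitute $L\simeq\sqrt{d}G/t$ and $t\simeq\mu\varepsilon/G$ into the smooth bound of \Cref{th:acc}'' would not give the stated rate: plugging those values into $Ld(d+\tau)\omax/(B\mu^3\varepsilon^2)$ yields $d^{3/2}(d+\tau)\omax G^2/(B\mu^4\varepsilon^3)$, an extra $\sqrt{d}$. The reason is that the smooth theorem already optimised $t$ under the constraint $Lt^2/\mu\lesssim\varepsilon$, whereas here $t$ is fixed by the different constraint $Gt/\mu\lesssim\varepsilon$; and the $d^2L^2t^2/B$ piece of the smooth variance bound came from a Taylor expansion of $f$ itself, which is unavailable when $f$ is merely Lipschitz. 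You must re-derive the recursion from the non-smooth variance bound $\frac{d(d+\tau)\omax}{t^2B}+\frac{dG^2}{B}$ rather than post-hoc substituting into the smooth end result.
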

We present the following theorems.
\begin{theoremprime}{th:acc_ns}
    \label{th:acc_ns_two}
    Assume \Cref{as:strong_conv_ns}, \ref{as:lipsh_f_two}, \ref{as:mixing_time} and \ref{as:noise_two_ns} hold, and consider problem \eqref{eq:erm} solved by \Cref{alg:ZOM_ASGD}.
    Then, for any target accuracy $\varepsilon$ and batch size multiplier $B$ (see \Cref{tab:definitions,tab:sgd_params} for notation), and for a suitable choice of $\gamma, t, p$, the number of oracle calls required to ensure $\E \|x^N - x^*\|^2 \leq \varepsilon$ is bounded by
    \begin{equation*}
        \label{eq:convergence_two_point_ns}
        B \cdot \mathcal{\tilde{O}}\left( \sqrt{\frac{\sqrt{d} G^2}{\mu^2 \varepsilon}}\log \frac{1}{\varepsilon} + \frac{(d + \tau) G^2}{B \mu ^ {2} \varepsilon}  \right)\quad \text{two-point oracle calls}\,.
    \end{equation*}
\end{theoremprime}

As we can see, there is no dependence on the mixing time as long as it is less then the dimension of the problem. Our results coincide with previous work under i.i.d. noise when applied with $\tau = 1$, as previously claimed in \Cref{tab:optimal_rate}.

\subsection{Oracle complexity bound}

In the main part we focused on \textit{expected} number of oracle calls to achieve accuracy $\varepsilon$. It is a common measure of oracle complexity for many algorithms, but one may ask for a stronger \textit{high-probability} bound. Usually, high-probability bounds easily follow from CLT as number of iterations grow. However, we should be more careful as our batch size distribution depends on the number of iterations $N$. We recall that the batch size $b_i$ comes from truncated log-geometric distribution
\begin{gather*}
    b_i = \begin{cases} 2^{J_i} l, & 2^{J_i} < M \\ l, & \text{else} \end{cases}, \quad J_i \sim \text{Geom}(1/2)
\end{gather*}
and $M$ depends on the number of iterations $N$ as $M \lesssim \frac{N}{\log N}$. With that, we apply Bernstein's inequality to the sum $S_N = \sum b_i$:
\begin{gather*}
    P(S_N > \alpha \mathbb{E}[S_N]) \leq \exp(-\frac{\alpha^2N^2 [\mathbb{E}b_1]^2}{2N \mathbb{E}[b_1^2] + \frac{2\alpha}{3}MN[\mathbb{E}b_1]}) \leq \\ \exp({-c\frac{\alpha^2 N^2 l^2 (\log M)^2}{N M l^2 + \alpha M N l(\log M)}}) \leq e^{-c (\log M)^2 \alpha}.
\end{gather*}
It shows the subexponential behavior of the normalized deviation from the mean, thus confirming that the expectation is typical in the high-probability sense.

\section{Notations and definitions.}
\label{sec:notations_appendix}
In this section we define the shorthanded notation used in the proof of upper bounds. For general notations and definitions, see \Cref{tab:definitions,tab:sgd_params}.

Markovian error:
\begin{align}
    \label{def:h}
    &h(x, Z) := F(x, Z) - f(x)
\end{align}

Single sample gradient estimators:
\begin{align}
    \label{def:hat_g}
    \hat{g}_i 
    &:=
    d \frac{\hat{F}(x + te_i, Z_{i}^{(+)}) - \hat{F}(x - te_i, Z_{i}^{(-)})}{2t} e_i
    \\
    \label{def:tilde_g}
    \tilde{g}_i
    &:=
    d \frac{F(x + te_i, Z_i^{(+)}) - F(x - te_i, Z_i^{(-)})}{2t} e_i
    \\
    &\notag\overset{\eqref{def:h}}{=}
    d \frac{f(x+te_i) + h(x + te_i, Z_i^{(+)}) - f(x - te_i) - h(x - te_i, Z_i^{(-)})}{2t} e_i
    \\
    \label{def:g_i}
    g_i &:= d \frac{f(x+te_i) - f(x-te_i)}{2t} e_i
\end{align}
Batched gradient estimators:
\begin{align}
    \label{def:avg_hat_g}
    &\hat{g}^{j} := \hat{g}_{rd}[2^j l] = \frac{1}{2^j l} \sum\limits_{i=1}^{2^j l} \hat{g}_i 
    \\ \notag
    \text{(Not to be confused with } &\text{$\hat{g}^k$, which is $\hat{g}_{ml}$ calculated on $k$-th iteration)}
    \\
    \label{def:avg_tilde_g}
    &\tilde{g}^{j} := \frac{1}{2^j l} \sum\limits_{i=1}^{2^j l} \tilde{g}_i
    \\
    \label{def:g}
    &g^j := \frac{1}{2^jl} \sum\limits_{i=1}^{2^jl} g_i
\end{align}
Directional gradients:
\begin{align}
    \label{def:directional_gradient}
    &\nabla_{e_i} f(x_0) := d \langle \nabla f(x_0) , e_i \rangle e_i 
    \\
    \label{def:hat_F_directional_gradient} 
    &\nabla_{e_i} F_i := d \langle \nabla F(x, Z_i), e_i \rangle e_i
\end{align}
Misc:
\begin{align}
    \label{def:E_e}
    &\E_e := \E_{e_1, e_2, \dots, e_{2^j l}}
    \\ \notag
    &\E_Z := \E_{Z_1, Z_2, \dots, Z_{2^j l}} \text{, where $Z_1 \sim \xi$ - arbitrary initial distribution on $(\Zset,\Zsigma)$}
    \\ \notag
    &\E := \E_Z \E_e
    \\ \notag
    &\mathcal{F}_k := \sigma(x^1, x^2, \dots, x^k) \text{ - sigma algebra of first $k$ iterations}
    \\ \notag
    &\E_k[\cdot] := \E [\cdot | \mathcal{F}_k] 
\end{align}
\begin{equation}
    \label{def:rn}
    r^N := \frac{1}{\mu}(f(x^N_f) - f(x^*)) + \norm{x^N - x^*}^2 
\end{equation}

\section{Proofs of one-point results}
\label{sec:1p_appendix}
\subsection{Markov variance reduction}
\label{sec:markov_var_reduction}
\begin{lemma}[Extended version of \Cref{lem:tech_markov}]
    Let \Cref{as:mixing_time,as:noise}(\ref{as:noise_two}) hold. Then for any $n \geq 1$ and $x \in \rset^{d}$ and any initial distribution $\xi$ on $(\Zset,\Zsigma)$, we have
    \begin{align}
        \label{eq:var_bound_any_app}
        &
        \E_{Z} \left[\left(\frac{1}{n}\sum\limits_{i=1}^{n} \E_{e_i} \left[h (x + te_i, Z_{i}) e_i \right] \right)^2\right] \lesssim \frac{\tau}{d n}\omax,
        \\
        \label{eq:var_bound_any_app_two}
        & \E_{Z} \left[\norm{ \frac{1}{n}\sum\limits_{i=1}^{n}\nabla F(x, Z_i) - \nabla f(x)}^2 \right] \lesssim \frac{\tau}{n} \tmax,
    \end{align}
\end{lemma}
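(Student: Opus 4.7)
The plan is to handle the two inequalities by combining a pointwise bound on the summand with the standard Markov-chain variance argument already established in \Cref{lem:tech_markov}. I will treat \eqref{eq:var_bound_any_app_two} first, since it is essentially a vector-valued instance of \Cref{lem:tech_markov}'s second claim. Setting $W_i := \nabla F(x, Z_i) - \nabla f(x)$, \Cref{as:noise_two} gives $\E_{Z \sim \pi}[W_i] = 0$ and $\|W_i\|^2 \leq \tmax$ uniformly in $Z$. Expanding $\E_{\xi} \|\tfrac{1}{n}\sum_{i} W_i\|^2$ as a double sum $\tfrac{1}{n^2}\sum_{i,j}\E_{\xi}\langle W_i, W_j\rangle$, bounding each cross term via \Cref{as:mixing_time} (with an $O(1)$ overhead to absorb the arbitrary initial distribution $\xi$), and summing the resulting geometric tails in $(1/4)^{\lfloor |i-j|/\tau\rfloor}$ delivers the claimed $\lesssim \frac{\tau}{n}\tmax$.

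For \eqref{eq:var_bound_any_app}, the improved $1/d$ factor forces a pointwise refinement before the same Markov-chain machinery is applied. Define $V(Z) := \E_{e \sim RS^d_2}[h(x+te, Z)\,e]$ with $h(x, Z) := F(x, Z) - f(x)$. The key observation is that the rescaled coordinate functions $\{\sqrt{d}\,[e]_k\}_{k=1}^{d}$ are orthonormal in $L^2(RS^d_2)$, since $\E_e\bigl[[e]_j [e]_k\bigr] = \delta_{jk}/d$. Bessel's inequality applied to the function $e \mapsto h(x+te, Z)$ against this orthonormal system then yields, for each fixed $Z$,
\begin{equation*}
\|V(Z)\|^2 \;=\; \sum_{k=1}^d \bigl(\E_e[h(x+te, Z)\,[e]_k]\bigr)^2 \;=\; \frac{1}{d}\sum_{k=1}^d \bigl\langle h(x+t\cdot, Z),\, \sqrt{d}\,[e]_k\bigr\rangle_{L^2(e)}^2 \;\leq\; \frac{1}{d}\,\E_e\bigl[h^2(x+te, Z)\bigr] \;\leq\; \frac{\omax}{d},
\end{equation*}
while Fubini combined with \Cref{as:noise} gives $\E_{Z \sim \pi}[V(Z)] = 0$ (since $\E_{Z \sim \pi}[h(x+te, Z)] = 0$ for every $e$). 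With this pointwise bound and centering in hand, I would replay the Markov-chain variance argument of the previous paragraph with $V_i := V(Z_i)$ in place of $W_i$ and $\omax/d$ in place of $\tmax$, producing $\E_{\xi}\|\tfrac{1}{n}\sum_{i} V_i\|^2 \lesssim \frac{\tau}{nd}\omax$ as desired.

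The main obstacle I anticipate is the pointwise bound $\|V(Z)\|^2 \leq \omax/d$: applying coordinate-wise Cauchy–Schwarz directly gives $[V(Z)]_k^2 \leq \omax/d$ per coordinate, but this degrades to $\|V(Z)\|^2 \leq \omax$ once one sums over $k$, losing exactly the factor we are trying to save. The $1/d$ improvement depends crucially on Bessel's inequality, which replaces the sum of $d$ separate Cauchy–Schwarz bounds by a single copy of $\|h(x+t\cdot, Z)\|_{L^2(e)}^2$. Everything downstream of this refinement is a routine adaptation of the technique already used for the original \Cref{lem:tech_markov}.
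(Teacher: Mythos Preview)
Your proposal is correct, and the overall reduction is identical to the paper's: for both inequalities you verify that the summands are centered under $\pi$ and uniformly bounded in norm, then invoke the black-box Markov-chain variance lemma (the paper cites Lemma~1 of \citet{beznosikov2024first}) with the appropriate variance proxy.

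The one place your argument genuinely differs is the proof of the pointwise bound $\|V(Z)\|^2 \leq \omax/d$. You observe that the rescaled coordinate functions $\{\sqrt{d}\,[e]_k\}$ are orthonormal in $L^2(RS^d_2)$ and apply Bessel's inequality to $e\mapsto h(x+te,Z)$, obtaining $d\|V(Z)\|^2 \leq \E_e[h^2]\leq\omax$ in one line. The paper instead writes $\|V\|^2 = \E_e[h\,\langle e,V\rangle]$, applies Cauchy--Schwarz, and uses the projection identity $\E_e\langle e,V\rangle^2 = \|V\|^2/d$ to get the self-bounding inequality $\|V\|^2 \leq \sqrt{\omax}\cdot\|V\|/\sqrt{d}$, which is then squared. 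Both are short; your Bessel route is arguably cleaner since it avoids the self-bounding manipulation and makes transparent why naive coordinatewise Cauchy--Schwarz loses a factor of $d$ (it replaces a single $L^2$ norm by $d$ copies of it), whereas the paper's version stays closer to elementary tools already catalogued in its \Cref{sec:basics_appendix}.
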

\begin{proof}
    The proof of \eqref{eq:var_bound_any_app_two} can be found in Lemma 1 of \citet{beznosikov2024first}.

    The proof under \Cref{as:noise} relies on the fact that aforementioned Lemma 1 requires just the two following conditions from the stochastic realizations $\nabla F(x, Z_i)$:
    \[ \begin{cases} \E_{\pi} \nabla F(x, Z_i) = \nabla f(x) 
        \\
    \norm{ \nabla F(x, Z_i) - \nabla f(x) }^2 \leq \tmax \end{cases} \]

    Denote $h_t(x, Z_i) := \E_{e} \left[h (x + te, Z_i) e \right], e \sim RS^d_2(1)$.
    \\
    Thus $\eqref{eq:var_bound_any_app} \Leftrightarrow \E_{Z} \left[\left(\frac{1}{n}\sum\limits_{i=1}^{n} h_t(x, Z_i) \right)^2\right] \lesssim \frac{\tau}{n} \frac{\omax}{d} \Leftrightarrow \begin{cases}
        \E_{\pi} h_t(x, Z_i) = 0
        \\
        \norm{ h_t(x, Z_i) }^2 \lesssim \frac{\omax}{d}
    \end{cases}$ 

    Let’s prove both of these equations, starting with unbiasedness:
    \[\E_{\pi} h_t(x, Z_i) = \E_{\pi} \E_{e} \left[h (x + te, Z_i) e \right] = \E_{e} \E_{\pi} \left[h (x + te, Z_i) e \right] \overset{\eqref{as:noise}}{=} \E_{e} 0 = 0\]
    \begin{eqnarray*}
        \norm{ h_t(x, Z_i) } ^ 2 
        & = &
        \norm{ \E_{e} \left[h (x + te, Z_i) e \right] }^2  
        \\
        & = &
        \langle \E_e \left[h (x + te, Z_i) e \right],  h_t(x, Z_i) \rangle
        \\
        & \overset{\circledOne}{=}&
        \E_{e} \left[h (x + te, Z_i) \cdot \langle e,  h_t(x, Z_i) \rangle\right]
        \\
        & \overset{\circledTwo}{\leq}&
        \sqrt{\E_{e} h (x + te, Z_i)^2} \cdot \sqrt{\E_{e} \langle e,  h_t(x, Z_i) \rangle ^ 2}
        \\
        & \overset{\eqref{eq:random_projection_norm}}{=}&
        \sqrt{\E_{e} h (x + te, Z_i)^2} \cdot \sqrt{\frac{1}{d}\norm{ h_t(x, Z_i) } ^ 2}
        \\
        &\overset{\eqref{as:noise}}{\leq}&
        \sqrt{\omax} \cdot \sqrt{\frac{1}{d} \norm{ h_t(x, Z_i) } ^ 2},
    \end{eqnarray*}
    where $\circledOne$ holds since $h_t(x, Z_i)$ does not depend on $e$, and 
    $\circledTwo$ is a Cauchy-Shwartz inequality for the following dot product: $\langle x(e), y(e) \rangle := \E_{e} \left[ x \cdot y \right]$.
    \\
    To conclude the proof we square the inequality we got:
    \\
    \[\norm{ h_t(x, Z_i) }^2 \leq \frac{\sqrt{\omax}}{\sqrt{d}} \cdot \sqrt{\| h_t(x, Z_i) \|^2} \Rightarrow \| h_t(x, Z_i) \|^2 \leq \frac{\omax}{d}. \]
\end{proof}
\subsection{Properties of smoothed function}
The following lemma establishes key properties of the $l_2$-ball smoothed function 
\begin{lemma}
    \label{lem:smoothing_properties}
    Assume $f$ is convex. Then the following holds for all $x \in \R^d$
    \begin{gather}
        \label{eq:smooth_property_preservation}
        \text{
            If $f$ is $L$-smooth / $G$-Lipschitz / $\mu$-strongly convex [\Cref{as:lipsh_grad,as:lipsh_f,as:strong_conv}],
        }
        \\
        \notag
        \text{
            then $\avgf$ from \eqref{tab:definitions} is also $L$-smooth / $G$-Lipschitz / $\mu$-strongly convex.
        }
    \end{gather}
    \begin{equation}
        \label{eq:finite_diff_unbiased_estimator}
        \textstyle{
            \nabla \avgf(x) = \E_e \left[ g(x) \right]
        }\,,
    \end{equation}
    \begin{equation}
        \label{eq:smooth_not_less}
        \textstyle{
            \avgf (x) \ge f(x)
        }\,,
    \end{equation}
    If $f$ is additionally $G$-Lipschitz:
    \begin{equation}
        \label{eq:smooth_lipsh_deviation}
        \textstyle{
            \avgf (x) \le f (x) + G t
        }\,,
    \end{equation}
    \begin{equation}
        \label{eq:smooth_is_smooth}
        \text{$f_t$ is $L$-smooth with $L = \frac{\sqrt{d} G}{t}$}\,,
    \end{equation}

    If $f$ is additionally $L$-smooth:
    \begin{equation}
        \label{eq:smooth_deviation}
        \textstyle{
            \avgf (x) \le f (x) + L t^2   
        }\,,
    \end{equation}
    \begin{equation}
        \label{eq:smooth_grad_deviation}
        \textstyle{
            \norm{ \nabla f(x) - \nabla \avgf(x) }^2 \le  L^2 t^2   
        }\,,
    \end{equation}
    \begin{equation}
        \label{eq:smooth_gradient_norm_lower_bound}
        \textstyle{
            \norm{ \nabla \avgf (x) }^2 \geq
            \frac{1}{2} \norm{ \nabla f(x) } ^ 2 - L^2 t^2 
        }\,.
    \end{equation}
    \begin{proof}
        Proving \eqref{eq:smooth_property_preservation}, we start with $G$-Lipschitzness:
        \begin{align*}
            | \avgf(x) - \avgf(y)|
            & =
            |\E_r \left[ f(x+tr) -  f(y+tr)\right]|
            \\
            & \overset{\eqref{eq:norm_jensen}}{\le} 
            \E_r | f(x+tr) -  f(y+tr) |
            \\
            & \overset{\eqref{as:lipsh_f}}{\le}
            \E_r G \norm{ x - y } 
            = 
            G \norm{ x - y }.
        \end{align*}
        Next, $L$-smoothness is analogous.
        Finally, $\mu$-strong convexity of $\avgf$, \eqref{eq:finite_diff_unbiased_estimator}, \eqref{eq:smooth_not_less} and \eqref{eq:smooth_deviation} are proven in Lemmas A2-A3 of \citep{akhavan2020exploiting}.

        \eqref{eq:smooth_lipsh_deviation} and \eqref{eq:smooth_is_smooth} can be seen in section 4.1 of \citet{gasnikov2022RandomizedGM}.

        We prove the rest of inequalities in order.

        Proof of \eqref{eq:smooth_grad_deviation}:
        \begin{align*}
            \norm{ \nabla f(x) - \nabla \avgf(x) }^2
            & =
            \norm{ \nabla f(x) -\E_r\nabla f(x+tr) }^2
            \\
            & = 
            \norm{\E_r\left[\nabla f(x) - \nabla f(x+tr)\right] }^2
            \\
            & \overset{\eqref{eq:norm_jensen}}{\le} 
            \E_r\norm{\nabla f(x) - \nabla f(x+tr)}^2 \\
            & \overset{\eqref{as:lipsh_grad}}{\le}
            \E_r L^2 t^2
            =
            L^2 t^2.
        \end{align*}
        \\
        Proof of \eqref{eq:smooth_gradient_norm_lower_bound}:
        \begin{align*}
            \norm{ \nabla \avgf (x) }^2
            & = 
            \norm{ \nabla f(x) + \left[\nabla \avgf (x) - \nabla f(x) \right]}^2 \\
            & \overset{\circledOne}{\geq} 
            \frac{1}{2} \norm{ \nabla f(x) } ^ 2 - \| \nabla \avgf (x) - \nabla f(x) \|^2 \\
            & \overset{\eqref{eq:smooth_grad_deviation}}{\geq} 
            \frac{1}{2} \norm{ \nabla f(x) } ^ 2 - L^2 t^2,
        \end{align*}
        where $\circledOne$ uses that $\norm{a + b}^2 \ge 1/2 \|a\|^2 - \|b\|^2$.
    \end{proof}
    \subsection{Inequalities for gradient approximation}
    \label{sec:grad_approx_ineq_appendix}
\end{lemma}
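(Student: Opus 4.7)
The plan is to verify each of the eight sub-claims by working directly from the definition $f_t(x) = \E_{r \sim RB_2^d}[f(x+tr)]$, with most items reducing to Jensen's inequality or pointwise application of the corresponding property of $f$. For the property preservation claim \eqref{eq:smooth_property_preservation}, I would write the defining inequality (Lipschitz, $L$-smooth, or $\mu$-strongly-convex) at each pair $(x+tr, y+tr)$ and integrate over $r$; because the shift by $tr$ acts uniformly on both arguments, linearity of expectation transfers the constant unchanged. For the pointwise deviation bounds, \eqref{eq:smooth_not_less} follows from Jensen's inequality combined with $\E r = 0$; \eqref{eq:smooth_lipsh_deviation} follows from $|f(x+tr)-f(x)| \le Gt\|r\| \le Gt$ and taking expectation; and \eqref{eq:smooth_deviation} follows from the descent lemma $f(x+tr) \le f(x) + \langle \nabla f(x), tr\rangle + \tfrac{L}{2}t^2\|r\|^2$ together with $\E r = 0$ and $\E\|r\|^2 \le 1$.

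For the gradient identity \eqref{eq:finite_diff_unbiased_estimator}, I would invoke the standard Stokes/divergence-theorem argument that gives $\nabla f_t(x) = \E_{e\sim RS_2^d}\bigl[\tfrac{d}{t} f(x+te)\, e\bigr]$; combining this identity with the same identity evaluated using the symmetry $e \sim -e$ on the sphere yields the symmetric finite-difference form $\E_e[g(x)]$ with $g$ as defined in \eqref{def:g_i}. The smoothness bound \eqref{eq:smooth_is_smooth} follows from this surface-integral representation: for any $x,y$ one bounds $\|\nabla f_t(x) - \nabla f_t(y)\|$ by using that $f(x+te)-f(y+te)$ is $G$-Lipschitz in $(x,y)$ and then applying Cauchy--Schwarz together with $\E_e\|e\|^2 = 1$; the $\sqrt{d}$ factor emerges from the dimension dependence in the ratio of sphere surface measure to ball volume.

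The last two items are the cleanest. For \eqref{eq:smooth_grad_deviation} I write $\nabla f_t(x) - \nabla f(x) = \E_r[\nabla f(x+tr) - \nabla f(x)]$, apply Jensen's inequality to $\|\cdot\|^2$, and bound each integrand by $L^2 t^2 \|r\|^2 \le L^2 t^2$ using \Cref{as:lipsh_grad}. Then \eqref{eq:smooth_gradient_norm_lower_bound} is a one-line consequence of the elementary inequality $\|a+b\|^2 \ge \tfrac{1}{2}\|a\|^2 - \|b\|^2$ applied with $a = \nabla f(x)$ and $b = \nabla f_t(x) - \nabla f(x)$.

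The main obstacles are the Stokes-type identity underlying \eqref{eq:finite_diff_unbiased_estimator} and the $\sqrt{d}G/t$ smoothness bound \eqref{eq:smooth_is_smooth}; both are nontrivial calculations but are standard in the zero-order optimization literature and can be cited from Akhavan et al. and Gasnikov et al., whose notation the paper already tracks. Everything else is a short direct computation, and the main care needed is simply to check that when $f$ satisfies a hypothesis (convexity, Lipschitzness, smoothness, strong convexity), the translated function $y \mapsto f(y+tr)$ satisfies it with the same constant, so that linearity of expectation hands the property to $f_t$.
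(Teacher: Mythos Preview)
Your proposal is correct and matches the paper's approach essentially line for line: the paper proves \eqref{eq:smooth_grad_deviation} and \eqref{eq:smooth_gradient_norm_lower_bound} exactly as you do, handles property preservation \eqref{eq:smooth_property_preservation} by the same pointwise-then-average argument, and defers \eqref{eq:finite_diff_unbiased_estimator}, \eqref{eq:smooth_not_less}, \eqref{eq:smooth_deviation}, \eqref{eq:smooth_lipsh_deviation}, \eqref{eq:smooth_is_smooth} to the same external references (Akhavan et al., Gasnikov et al.) that you identify. The only cosmetic difference is that you sketch self-contained arguments for several of the cited items, whereas the paper simply cites.
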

\begin{lemma}
    \label{lem:main_ineq}
    Assume \Cref{as:lipsh_grad}, \Cref{as:mixing_time} and \Cref{as:noise}.
    Then the following inequalities hold for any initial distribution $\xi$ on $(\Zset,\Zsigma)$ and for all $x \in \R^d$:
    \begin{align}
        &\label{eq:adversarial_bias}
        \textstyle{
            \norm{ \hat{g}^j - \tilde{g}^j } ^2 \leq \frac{d^2 \Delta^2}{t^2}
        }\,,
        \\
        &\label{eq:markov_noise_single_direction}
        \textstyle{
            \E \norm{ \tilde{g}_{i} - g_i }^2 \leq \frac{d^2 \omax}{t^2} 
        }\,,
        \\
        &\label{eq:amogus4}
        \textstyle{
            \E\norm{\E_e \left[ \tilde{g}^j - g^j \right] }^2 \leq \frac{d C_1 \tau \omax}{t^2 2^j l}
        }\,,
        \\
        &\label{eq:finite_diff_variance_fixed_direction}
        \textstyle{
            \E \norm{ g_i - \nabla_{e_i} f }^2 \le \frac{d ^2 L^2 t^2}{4}
        }\,,
        \\
        &\label{eq:estimator_directional_variance}
        \textstyle{
            \E \norm{ \tilde{g}^j -\E_e \tilde{g}^j }^2 \leq \frac{3}{2^{j} l} \left[\frac{d^2 \omax}{t^2} + \frac{d ^2 L^2 t^2}{4} + d\norm{ \nabla f }^2 \right]
        }\,,
        \\
        &\label{eq:markov_variance}
        \textstyle{
            \E\norm{\tilde{g}^j - \E_e g^j }^2 \lesssim \frac{d \left( d + \tau \right) \omax}{t^2 2^j l} + \frac{d^2 L^2 t^2}{2^j l} + \frac{d \norm{ \nabla f } ^ 2}{2^j l}
        }\,,
        \\
        &\label{eq:total_variance}
        \textstyle{
            \E\norm{\hat{g}^j - \nabla f_t }^2 \lesssim \frac{d^2 \Delta ^ 2}{t^2} + \frac{d \left( d + \tau \right) \omax}{t^2 2^j l} + \frac{d ^2 L^2 t^2}{2^j l} + \frac{d \norm{ \nabla f }^2}{2^j l}
        }\,,
        \\
        &\label{eq:total_bias}
        \textstyle{
            \norm{ \E \hat{g}^j - \nabla \avgf }^2 \leq \frac{2 d^2 \Delta^2}{t^2} + \frac{2 d C_1 \tau \omax}{t^2 2^j l}
        }\,.
        \\
    \end{align}
\end{lemma}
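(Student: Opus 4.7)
The eight bounds form a hierarchy, so I would prove them in the order stated, each peeling off one source of error. Inequality \eqref{eq:adversarial_bias} is immediate from \eqref{def:hat_g} and \eqref{def:tilde_g}: the difference equals $\tfrac{d}{2t}[\Delta(x+te_i)-\Delta(x-te_i)]\,e_i$, of norm at most $d\Delta/t$, and averaging preserves the bound by the triangle inequality. Identically, \eqref{eq:markov_noise_single_direction} uses the uniform bound $|h(x,Z)|\le\sqrt{\omax}$ from \Cref{as:noise}. Estimate \eqref{eq:finite_diff_variance_fixed_direction} is the standard $L$-smooth finite-difference inequality $|f(x+te)-f(x-te)-2t\langle\nabla f,e\rangle|\le Lt^2$ for unit $e$, giving $\norm{g_i-\nabla_{e_i}f}\le dLt/2$. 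For \eqref{eq:amogus4} I would note that $\E_{e_i}[\tilde g_i - g_i] = \tfrac{d}{2t}\bigl(\E_{e_i}[h(x+te_i,Z_i^+)e_i]-\E_{e_i}[h(x-te_i,Z_i^-)e_i]\bigr)$, then apply the extended \Cref{lem:tech_markov}, specifically \eqref{eq:var_bound_any_app}, separately to the $+$ and $-$ half-chains; the $\tau\omax/(d\cdot 2^jl)$ from each piece combines with the $d^2/t^2$ prefactor to yield the claimed $d\tau\omax/(t^2\cdot 2^jl)$.

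For \eqref{eq:estimator_directional_variance} I would first condition on $\{Z_i\}$. The centered summands $\tilde g_i - \E_{e_i}\tilde g_i$ are then conditionally independent with mean zero, so the average has variance $(2^jl)^{-1}$ times the per-sample variance. I control each per-sample variance by decomposing $\tilde g_i = (\tilde g_i - g_i) + (g_i - \nabla_{e_i}f) + \nabla_{e_i}f$; after centering this gives three terms whose variances are bounded respectively by \eqref{eq:markov_noise_single_direction}, \eqref{eq:finite_diff_variance_fixed_direction}, and the sphere identity $\E_e\norm{\nabla_e f - \nabla f}^2 \le d\norm{\nabla f}^2$ (itself following from $\E_e\langle\nabla f,e\rangle^2 = \norm{\nabla f}^2/d$). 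Summing with the elementary $\norm{a+b+c}^2 \le 3(\norm{a}^2+\norm{b}^2+\norm{c}^2)$ produces the stated coefficient $3/(2^jl)$. Then \eqref{eq:markov_variance} follows by splitting $\tilde g^j - \E_e g^j = (\tilde g^j - \E_e\tilde g^j) + \E_e[\tilde g^j - g^j]$ and applying $\norm{a+b}^2\le 2\norm{a}^2+2\norm{b}^2$ together with \eqref{eq:estimator_directional_variance} and \eqref{eq:amogus4}.

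Finally, \eqref{eq:total_variance} and \eqref{eq:total_bias} are wrap-ups using $\E_e g^j = \nabla\avgf$, which is \eqref{eq:finite_diff_unbiased_estimator}. For the variance I split $\hat g^j - \nabla\avgf = (\hat g^j - \tilde g^j) + (\tilde g^j - \E_e g^j)$ and apply \eqref{eq:adversarial_bias} and \eqref{eq:markov_variance}; for the bias I take full expectation, so the variance term from the decomposition of $\tilde g^j - \E_e g^j$ vanishes and only the Markov-chain bias \eqref{eq:amogus4} survives alongside \eqref{eq:adversarial_bias}. The main obstacle I anticipate is the directional variance \eqref{eq:estimator_directional_variance}: bounding $\norm{\nabla_{e_i}f}^2$ pointwise by $d^2\norm{\nabla f}^2$ (via Cauchy--Schwarz on $d\langle\nabla f,e_i\rangle$) would yield a spurious extra factor of $d$ that propagates through \eqref{eq:markov_variance}--\eqref{eq:total_variance} and ultimately destroys the $(d+\tau)$ scaling in \Cref{th:acc}; the correct $d\norm{\nabla f}^2$ coefficient is recovered only by keeping the expectation over $e_i$ symbolic until the sphere projection identity can be applied, which requires careful bookkeeping of which expectations are evaluated inside versus outside the squared norm.
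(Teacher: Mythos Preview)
Your plan is correct and mirrors the paper's proof essentially step for step: the same pairwise splits, the same invocation of \eqref{eq:var_bound_any_app} for \eqref{eq:amogus4}, the same independence-in-$e$ argument plus the three-term decomposition of $\tilde g_i$ for \eqref{eq:estimator_directional_variance}, and the same chaining for \eqref{eq:markov_variance}--\eqref{eq:total_bias}. The only cosmetic difference is that the paper bounds $\E_e\norm{\tilde g_i - \E_{e_i}\tilde g_i}^2 \le \E_e\norm{\tilde g_i}^2$ first and then splits the second moment, whereas you split and then center each piece; both routes land on the same $d\norm{\nabla f}^2$ term via the sphere identity you flagged.
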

\begin{proof}
    We prove all estimates one by one, starting with \eqref{eq:adversarial_bias}:
    \begin{eqnarray*}
        \norm{ \hat{g}^j - \tilde{g}^j } ^2
        &\overset{\eqref{def:avg_hat_g}, \eqref{def:avg_tilde_g}}{=}&
        \norm{ \frac{1}{2^j l} \sum\limits_{i=1}^{2^j l} \left [ \hat{g}_i - \tilde{g}_{i} \right]} ^ 2
        \\ 
        &\overset{\eqref{def:hat_g}, \eqref{def:tilde_g}}{=}&
        \frac{d^2}{4t^2} \bigg\| \frac{1}{2^j l} \sum\limits_{i=1}^{2^j l} [
        \hat{F}(x + te_i, Z_{i}^{+}) - \hat{F}(x - te_i, Z_{i}^{-})
        \\ && \hspace{1.8cm} - F(x + te_i, Z_{i}^{+}) + F(x - te_i, Z_{i}^{-})] e_i \bigg\|^2
        \\ 
        &\overset{\eqref{def:delta}}{=}&
        \frac{d^2}{4t^2} \norm{ \frac{1}{2^j l} \sum\limits_{i=1}^{2^j l} \left[\Delta(x + t e_i) -  \Delta(x - t e_i) \right] e_i} ^ 2
        \\ 
        &\overset{\eqref{eq:sum_sqr}}{\leq}&
        \frac{d^2}{4t^2 2^j l} \sum\limits_{i=1}^{2^j l} \norm{ \left[\Delta(x + t e_i) -  \Delta(x - t e_i) \right] e_i} ^ 2
        \\ 
        &\overset{\norm{ e_i } = 1}{=}&
        \frac{d^2}{4t^2 2^j l} \sum\limits_{i=1}^{2^j l} \left| \Delta(x + t e_i) -  \Delta(x - t e_i)  \right | ^ 2
        \\ 
        &\overset{\eqref{def:delta}}{\leq}&
        \frac{d^2}{4 t^2} 4 \Delta^2
        \\
        &=&
        \frac{d^2 \Delta^2}{t^2}.
    \end{eqnarray*}
    Proof of \eqref{eq:markov_noise_single_direction}:
    \begin{eqnarray*}
        \E \norm{ \tilde{g}_{i} - g_i }^2 
        &\overset{\eqref{def:tilde_g}, \eqref{def:g_i}}{=}&
        \E \norm{ d \frac{h(x + te_i, Z_{i}^{+}) - h(x - te_i, Z_{i}^{-})}{2t} e_i}^2 
        \\
        &\overset{\norm{ e_i } = 1}{=}&
        \frac{d^2}{4t^2}\E \left[  h(x + te_i, Z_i^{+}) - h(x - te_i, Z_i^{-}) \right]^2
        \\
        &\overset{\eqref{eq:sum_sqr}, \eqref{as:noise}}{\leq} &
        \frac{d^2 \omax}{t^2}.
    \end{eqnarray*}
    Proof of \eqref{eq:amogus4}:
    \begin{eqnarray*}
        &&\E\norm{\E_e \left[ \tilde{g}^j - g^j \right] }^2
        \\
        &\overset{\eqref{def:tilde_g}, \eqref{def:g}}{=}&
        \E \norm{ \frac{1}{2^j l} \sum\limits_{i = 1}^{2^j l}\E_e \left [ d \frac{h(x + t e_i, Z_i^{+}) - h(x - te_i, Z_i^{-})}{2t} e_i \right]} ^ 2 
        \\
        &=&
        \frac{d^2}{t^2} \E \norm{ \frac{1}{2^j l} \sum\limits_{i = 1}^{2^j l}\E_e \left [ \frac{h(x + te_i, Z_i^{+}) e_i - h(x - te_i, Z_i^{-}) e_i}{2} \right]} ^ 2 
        \\
        &\overset{\eqref{eq:sum_sqr}}{\le}&
        \frac{d^2}{t^2} \frac{1}{2} \left [ \E \norm{ \frac{1}{2^j l} \sum\limits_{i = 1}^{2^j l}\E_e \left [ h(x + te_i, Z_i^{+}) e_i \right]} ^ 2 + \E \norm{ \frac{1}{2^j l} \sum\limits_{i = 1}^{2^j l}\E_e \left [ h(x-te_i, Z_i^{-}) e_i \right]} ^ 2 \right] 
        \\
        & \overset{\eqref{eq:var_bound_any_app}}{\le}&
        \frac{d C_1 \tau \omax}{t^2 2^j l}.
    \end{eqnarray*}
    Proof of \eqref{eq:finite_diff_variance_fixed_direction}: 
    \begin{eqnarray*}
        &&\E\norm{ g_i - \nabla_{e_i} f }^2
        \\
        & \overset{\eqref{def:g_i}, \eqref{def:directional_gradient}}{=}&
        \E \norm{ d \frac{f(x+te_i) - f(x - te_i)}{2t} e_i - d \langle \nabla f(x), e_i \rangle e_i}^2
        \\
        &  =& 
        d^2\E \left| \frac{f(x+te_i) - f(x) + f(x) - f(x - te_i) - 2t \langle \nabla f(x), e_i \rangle}{2t}\right|^2
        \\
        &  = &
        d^2\E \left| \frac{f(x+te_i) - f(x) - \langle \nabla f(x), te_i \rangle}{2t} + \frac{f(x) - f(x - te_i) + \langle \nabla f(x), -te_i \rangle}{2t} \right|^2 
        \\
        & \overset{\circledOne}{\le}&
        \frac{2 d^2}{4 t^2} (\frac{L^2 t^4}{4} + \frac{L^2 t^4}{4}) 
        \\
        & = &
        \frac{d ^2 L^2 t^2}{4}, 
    \end{eqnarray*}
    where $\circledOne$ uses \Cref{as:lipsh_grad}, \eqref{eq:smothness} and \eqref{eq:sum_sqr}.
    \\
    Proof of \eqref{eq:estimator_directional_variance}:
    \begin{eqnarray*}
        \E \norm{ \tilde{g}^j -\E_e \tilde{g}^j }^2
        & \overset{\eqref{def:avg_tilde_g}}{=} &
        \E_Z\E_e \norm{ \frac{1}{2^j l} \sum\limits_{i=1}^{2^j l} \left [\tilde{g}_{i} -\E_{e_i} \tilde{g}_{i} \right]}^2
        \\
        & \overset{\circledOne}{=} &
        \E_Z\E_e \frac{1}{2^{2j} l^2} \sum\limits_{i=1}^{2^j l} \norm{ \tilde{g}_{i} -\E_{e_i} \tilde{g}_{i}  }^2
        \\
        &\overset{\eqref{eq:var_2nd_moment}}{\leq} &
        \frac{1}{2^{2j} l^2} \sum\limits_{i=1}^{2^j l}  \E_Z\E_e \norm{ \tilde{g}_{i} }^2
        \\
        &\overset{\eqref{eq:sum_sqr}}{\leq} &
        \frac{3}{2^{2j} l^2} \sum\limits_{i=1}^{2^j l} \E \left[\norm{ \tilde{g}_{i} - g_{i}  }^2 + \|  g_i - \nabla_{e_i} f \|^2 + \| \nabla_{e_i} f \|^2 \right]
        \\
        &\overset{\eqref{eq:markov_noise_single_direction}, \eqref{eq:finite_diff_variance_fixed_direction}, \eqref{eq:random_projection_norm}}{\leq} &
        \frac{3}{2^{j} l} \left[\frac{d^2 \omax}{t^2} + \frac{d ^2 L^2 t^2}{4} + d\norm{ \nabla f }^2 \right],
    \end{eqnarray*}
    where $\circledOne$ holds, since $\tilde{g}_{i}$ are independent w.r.t. $e_i$ and $\E_{e} \left[ \tilde{g}_{i} -\E_{e_i} \left[ \tilde{g}_{i} \right] \right] = 0$.
    \\
    Proof of \eqref{eq:markov_variance}:
    \begin{eqnarray*}
        \E\norm{\tilde{g}^j - \E_e g^j }^2
        & \overset{\eqref{eq:sum_sqr}}{\leq}&
        2\E \left[ \norm{ \tilde{g}^j - \E_e \tilde{g}^j }^2 + \|\E_e  \tilde{g}^j - \E_e g^j \|^2\right]
        \\
        & \overset{\eqref{eq:estimator_directional_variance}, \eqref{eq:amogus4}}{\leq}&
        2 \left[ \frac{3}{2^{j} l} \left[\frac{d^2 \omax}{t^2} + \frac{d ^2 L^2 t^2}{4} + d\norm{ \nabla f }^2 \right] + \frac{d C_1 \tau \omax}{t^2 2^j l} \right]
        \\
        & \lesssim &
        \frac{d \left( d + \tau \right) \omax}{t^2 2^j l} + \frac{d^2 L^2 t^2}{2^j l} + \frac{d \norm{ \nabla f } ^ 2}{2^j l}.
    \end{eqnarray*}
    Proof of \eqref{eq:total_variance}:
    \begin{eqnarray*}
        \E\norm{\hat{g}^j - \nabla f_t }^2
        &\overset{\eqref{eq:sum_sqr}}{\leq} &
        2\E \left[\norm{ \hat{g}^j - \tilde{g}^j}^2 \|  + \| \tilde{g}^j -\E_e g^j\|^2 \right]
        \\
        & \overset{\eqref{eq:adversarial_bias}, \eqref{eq:markov_variance}}{\lesssim}&
        \frac{d^2 \Delta ^ 2}{t^2} + \frac{d \left( d + \tau \right) \omax}{t^2 2^j l} + \frac{d ^2 L^2 t^2}{2^j l} + \frac{d \norm{ \nabla f }^2}{2^j l}.
    \end{eqnarray*}
    Proof of \eqref{eq:total_bias}:
    \begin{eqnarray*}
        \norm{ \E \hat{g}^j - \nabla \avgf }^2
        &\overset{\eqref{eq:sum_sqr}}{\leq}&
        2\norm{ \E \hat{g}^j -\E \tilde{g}^j }^2 + 2 \|\E \tilde{g}^j - \nabla \avgf \|^2
        \\
        &\overset{\eqref{eq:finite_diff_unbiased_estimator}}{=} &
        2\norm{ \E \hat{g}^j -\E \tilde{g}^j }^2 + 2 \| \E_Z\E_e\tilde{g}^j -\E_e g^j\|^2
        \\
        &\overset{\eqref{eq:norm_jensen}}{\leq}&
        2\norm{ \E \hat{g}^j -\E \tilde{g}^j }^2 + 2\E_Z\| \E_e\tilde{g}^j -\E_e g^j\|^2
        \\
        & \overset{\eqref{eq:adversarial_bias}, \eqref{eq:amogus4}}{\leq} &
        \frac{2 d^2 \Delta^2}{t^2} + \frac{2 d C_1 \tau \omax}{t^2 2^j l}.
    \end{eqnarray*}
\end{proof}

\begin{lemma}
    \label{lem:main_ineq_ns}
    Assume \Cref{as:lipsh_f}, \Cref{as:strong_conv}, \Cref{as:noise}.
    Then the following inequalities hold for any initial distribution $\xi$ on $(\Zset,\Zsigma)$ and for all $x \in \R^d$:
    \begin{flalign}
        &\label{eq:finite_diff_norm}
        \textstyle{
            \E \norm{g_i}^2 \lesssim d G^2
        }\,,
        \\
        &\label{eq:estimator_directional_variance_ns}
        \textstyle{
            \E \norm{ \tilde{g}^j -\E_e \tilde{g}^j }^2 \lesssim \frac{2}{2^{j} l} \left[\frac{d^2 \omax}{t^2} + d G^2 \right]
        }\,,
        \\
        &\label{eq:markov_variance_ns}
        \textstyle{
            \E\norm{\tilde{g}^j - \E_e g^j }^2 \lesssim \frac{d C_1 \left( d + \tau \right) \omax}{t^2 2^j l} + \frac{d G^2}{2^j l}
        }\,.
        \\
        &\label{eq:total_variance_ns}
        \textstyle{
            \E\norm{\hat{g}^j - \nabla f_t }^2 \lesssim \frac{d^2 \Delta ^ 2}{t^2} + \frac{d C_1 \left( d + \tau \right) \omax}{t^2 2^j l} + \frac{d G^2}{2^j l}
        }\,.
    \end{flalign}
\end{lemma}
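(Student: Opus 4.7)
The statement is the non-smooth counterpart of \Cref{lem:main_ineq}, and my plan is to preserve the same four-step architecture, substituting the single ingredient that genuinely required smoothness. Namely, \eqref{eq:finite_diff_variance_fixed_direction} in the smooth proof was used to control $g_i$ via its approximation to the directional gradient; here I will replace it with a direct moment bound on $\|g_i\|$ coming purely from $G$-Lipschitzness. Once such a replacement is in place, the remaining three inequalities follow by the very same triangle/variance decompositions and reuse of \eqref{eq:markov_noise_single_direction}, \eqref{eq:amogus4}, \eqref{eq:adversarial_bias} and \eqref{eq:finite_diff_unbiased_estimator}, exactly as in the smooth case.

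The main obstacle is \eqref{eq:finite_diff_norm}. The naive Lipschitz bound $|f(x+te_i)-f(x-te_i)|\le 2tG$ only gives the deterministic estimate $\|g_i\|\le dG$, hence $\E\|g_i\|^2\le d^2G^2$, which is a factor of $d$ too weak. To close the gap I would invoke concentration of measure on the sphere: the scalar map $F(e):=f(x+te)-f(x-te)$ is $2Gt$-Lipschitz on $S^{d-1}$ and antisymmetric ($F(-e)=-F(e)$), so under $e\sim RS^d_2$ it has mean zero. L\'evy's inequality for Lipschitz functions on the sphere then yields $\E_e[F(e)^2]\lesssim G^2t^2/d$, and multiplying by $d^2/(4t^2)$ gives $\E\|g_i\|^2\lesssim dG^2$. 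This dimension-free improvement is specific to sphere sampling and is exactly what drives the linear (rather than quadratic) dependence on $d$ in the deterministic term of \Cref{th:acc_ns}.

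With \eqref{eq:finite_diff_norm} established, I would prove \eqref{eq:estimator_directional_variance_ns} by noting that the $e_i$ are i.i.d.\ and independent of $\{Z_i\}$, so conditionally on $Z$ the summands $\tilde g_i-\E_{e_i}\tilde g_i$ are mean-zero and pairwise uncorrelated; the variance of the average therefore collapses to a single-sample average, which I split via $\|\tilde g_i\|^2\le 2\|\tilde g_i-g_i\|^2+2\|g_i\|^2$ and bound term-by-term using \eqref{eq:markov_noise_single_direction} and the fresh \eqref{eq:finite_diff_norm}. For \eqref{eq:markov_variance_ns} I decompose $\tilde g^j-\E_e g^j=(\tilde g^j-\E_e\tilde g^j)+\E_e(\tilde g^j-g^j)$, apply $\|a+b\|^2\le 2\|a\|^2+2\|b\|^2$, and use \eqref{eq:estimator_directional_variance_ns} together with \eqref{eq:amogus4} (which only needs \Cref{as:mixing_time,as:noise} and so carries over verbatim). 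Finally, \eqref{eq:total_variance_ns} follows from the split $\hat g^j-\nabla f_t=(\hat g^j-\tilde g^j)+(\tilde g^j-\E_e g^j)$, where $\nabla f_t=\E_e g^j$ by \eqref{eq:finite_diff_unbiased_estimator}, the first piece is absorbed by the adversarial bound \eqref{eq:adversarial_bias}, and the second piece by \eqref{eq:markov_variance_ns}. The rest of the computation is mechanical rearrangement and will not require any further ideas.
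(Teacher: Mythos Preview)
Your proposal is correct and follows essentially the same route as the paper: the paper also isolates \eqref{eq:finite_diff_norm} as the only genuinely new ingredient, establishes it via concentration of a $Gt$-Lipschitz function on $S^{d-1}$ (citing \citep[Lemma~9]{shamir2017optimal}, which is exactly the Poincar\'e/L\'evy bound you invoke), and then reuses \eqref{eq:markov_noise_single_direction}, \eqref{eq:amogus4}, \eqref{eq:adversarial_bias}, \eqref{eq:finite_diff_unbiased_estimator} with the same triangle decompositions you describe. The only cosmetic difference is that the paper centers $f(x+te)$ about its mean before applying concentration, whereas you use the antisymmetry $F(-e)=-F(e)$ to get mean zero directly; both lead to the identical estimate.
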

\begin{proof}
    \hfill\break
    Proof of \eqref{eq:finite_diff_norm}:
    \begin{eqnarray*}
        \E \norm{g_i}^2
        &\overset{\eqref{def:hat_g}}{=}&
        \frac{d^2}{4 t^2} \E \left|f(x + te_i) - f(x - te_i) \right|^2
        \\
        &\overset{\eqref{eq:sum_sqr}}{\leq}&
        \frac{d^2}{2 t^2} \E \qty[ \left|f(x + te_i) - \E_{e_i} f(x + te_i)\right|^2 + \left|\E_{e_i} f(x + te_i) - f(x - te_i) \right|^2]
        \\
        &\overset{\circledOne}{\leq}&
        \frac{d^2}{t^2} \E \left|f(x + te_i) - \E_{e_i} f(x + te_i)\right|^2 
        \\
        &\overset{\circledTwo}{\lesssim} &
        d G^2,
    \end{eqnarray*}
    where $\circledOne$ uses that the distribution of $e_i$ is symmetric, and \\
    $\circledTwo$ uses the fact that for $f$ which is $G$-Lipshitz and $e \in RS^d_2(1)$ it holds that $\E \qty[f(e) - \E_e f(e)]^2 \lesssim \frac{G^2}{d}$ [same reasoning as \citep{shamir2017optimal}, Lemma 9].

    Proof of \eqref{eq:estimator_directional_variance_ns}:
    \begin{eqnarray*}
        \E \norm{ \tilde{g}^j -\E_e \tilde{g}^j }^2
        &\overset{\circledOne}{\leq} &
        \frac{1}{2^{2j} l^2} \sum\limits_{i=1}^{2^j l}  \E_Z\E_e \norm{ \tilde{g}_{i} }^2
        \\
        &\overset{\eqref{eq:sum_sqr}}{\leq} &
        \frac{2}{2^{2j} l^2} \sum\limits_{i=1}^{2^j l} \E \left[\norm{ \tilde{g}_{i} - g_{i}  }^2 + \|  g_i \| ^2 \right]
        \\
        &\overset{\eqref{eq:markov_noise_single_direction}, \eqref{eq:finite_diff_norm}}{\leq} &
        \frac{2}{2^{j} l} \left[\frac{d^2 \omax}{t^2} + d G^2 \right],
    \end{eqnarray*}
    where $\circledOne$ is analogous to \eqref{eq:estimator_directional_variance}.
    \\
    Proof of \eqref{eq:markov_variance_ns}:
    \begin{eqnarray*}
        \E\norm{\tilde{g}^j - \E_e g^j }^2
        & \overset{\eqref{eq:sum_sqr}}{\leq}&
        2\E \left[ \norm{ \tilde{g}^j - \E_e \tilde{g}^j }^2 + \|\E_e  \tilde{g}^j - \E_e g^j \|^2\right]
        \\
        & \overset{\eqref{eq:estimator_directional_variance_ns}, \eqref{eq:amogus4}}{\leq}&
        2 \left[ \frac{2}{2^{j} l} \left[\frac{d^2 \omax}{t^2} + d G^2 \right] + \frac{d C_1 \tau \omax}{t^2 2^j l} \right]
        \\
        & \lesssim &
        \frac{d \left( d + \tau \right) \omax}{t^2 2^j l} + \frac{d G^2}{2^j l}.
    \end{eqnarray*}
    Proof of \eqref{eq:total_variance_ns}:
    \begin{eqnarray*}
        \E\norm{\hat{g}^j - \nabla f_t }^2
        &\overset{\eqref{eq:sum_sqr}}{\leq} &
        2\E \left[\norm{ \hat{g}^j - \tilde{g}^j}^2 + \norm{\E_e \tilde{g}^j - \nabla f_t}^2\right]
        \\
        & &
        \\
        & \overset{\eqref{eq:adversarial_bias}, \eqref{eq:markov_variance_ns}}{\lesssim} &
        \frac{d^2 \Delta ^ 2}{t^2} + \frac{d \left( d + \tau \right) \omax}{t^2 2^j l} + \frac{d G^2}{2^j l}.
    \end{eqnarray*}
\end{proof}

\begin{lemma}[\Cref{lem:expect_bound_grad}]
    \label{lem:expect_bound_grad_appendix}
    Let \Cref{as:mixing_time,as:noise} hold.
    For any initial distribution $\xi$ on $(\Zset,\Zsigma)$ the gradient estimates $\hat{g}_{ml}$ satisfy $\E[\hat{g}_{ml}] = \E\qty[\hat{g}_{rd}\qty[2^{\lfloor \log_2 \batchbound \rfloor} l]]$.
    Moreover,
    \begin{equation}
        \label{eq:estimate_bias}
        \norm{ \nabla \avgf(x) - \E[\hat{g}_{ml}]}^2 
        \lesssim
        \frac{d^2 \Delta^2}{t^2} + \frac{d \tau \omax}{t^2 M B}\,.
    \end{equation}
    Moreover, under assumption \Cref{as:lipsh_grad}
    \begin{equation}
        \label{eq:estimate_variance}
        \E[\norm{ \nabla f_t(x) - \hat{g}_{ml}}^2] 
        \lesssim 
        \frac{d^2 \Delta ^ 2}{t^2} + \frac{d \left( d + \tau \right) \omax}{t^2 B} + \frac{d ^2 L^2 t^2}{B} + \frac{d}{B} \norm{ \nabla f }^2
        \,.
    \end{equation}
    While under assumption \Cref{as:lipsh_f}
    \begin{equation}
        \label{eq:estimate_variance_ns}
        \E[\norm{ \nabla f_t(x) - \hat{g}_{ml}}^2]
        \lesssim 
        \frac{d^2 \Delta ^ 2}{t^2} + \frac{d \left( d + \tau \right) \omax}{t^2 B} + \frac{d G^2}{B}\,.
    \end{equation}
\end{lemma}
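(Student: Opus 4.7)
The statement has three distinct parts: the expectation identity for the MLMC estimator, the bias bound \eqref{eq:estimate_bias}, and the two variance bounds \eqref{eq:estimate_variance}--\eqref{eq:estimate_variance_ns}. All three reduce to applying the base-estimator guarantees \eqref{eq:total_variance}, \eqref{eq:total_bias} (respectively \eqref{eq:total_variance_ns}) at appropriately chosen levels, together with the standard multilevel bookkeeping. I would therefore split the argument into three short steps.

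\textbf{Expectation identity.} Unfold the definition of $\hat{g}_{ml}$ and condition on $J$ to obtain
\begin{equation*}
\E[\hat{g}_{ml}] \;=\; \E\hat{g}_{rd}[l] \;+\; \sum_{j=1}^{\lfloor \log_2 M\rfloor}\PP(J=j)\cdot 2^{j}\bigl(\E\hat{g}_{rd}[2^{j}l] - \E\hat{g}_{rd}[2^{j-1}l]\bigr).
\end{equation*}
Since $J\sim\mathrm{Geom}(1/2)$ was chosen so that $\PP(J=j)\cdot 2^{j}=1$ for every level, each increment carries weight one and the sum telescopes to $\E\hat{g}_{rd}[2^{\lfloor \log_2 M\rfloor}l] - \E\hat{g}_{rd}[l]$, giving the claimed identity.

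\textbf{Bias bound.} Plug this identity into $\nabla f_t - \E[\hat{g}_{ml}]$ and invoke \eqref{eq:total_bias} at $n = 2^{\lfloor \log_2 M\rfloor}\cdot l$. Using $2^{\lfloor\log_2 M\rfloor}\geq M/2$ together with $l \geq B$, we obtain $n\gtrsim MB$, and the stochastic part of \eqref{eq:total_bias} becomes $\lesssim d\tau\omax/(t^{2}MB)$, yielding \eqref{eq:estimate_bias}.

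\textbf{Variance bounds.} Decompose
\begin{equation*}
\E\|\hat{g}_{ml} - \nabla f_t\|^{2} \;=\; \|\E\hat{g}_{ml} - \nabla f_t\|^{2} \;+\; \mathrm{Var}(\hat{g}_{ml}),
\end{equation*}
bound the first summand by the bias already established, and exploit independence of the base $B_{0}=\hat{g}_{rd}[l]$ from the correction $D_{J}\mathbbm{1}_{2^{J}\le M}$ to write $\mathrm{Var}(\hat{g}_{ml}) = \mathrm{Var}(B_{0}) + \mathrm{Var}(D_{J}\mathbbm{1}) \le \E\|B_{0}-\nabla f_t\|^{2} + \sum_{j=1}^{\lfloor\log_2 M\rfloor} 2^{-j}\,\E\|D_{j}\|^{2}$. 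The first term is controlled by \eqref{eq:total_variance} at $n=l$. For each $D_{j}=2^{j}(\hat{g}_{rd}[2^{j}l]-\hat{g}_{rd}[2^{j-1}l])$, the nested construction expresses the difference as an antithetic half-average over $n_{j}=2^{j-1}l$ fresh samples, whose second moment is $\lesssim C/n_{j}$ with $C = \tfrac{d^{2}\Delta^{2}}{t^{2}} + \tfrac{d(d+\tau)\omax}{t^{2}} + d^{2}L^{2}t^{2} + d\|\nabla f\|^{2}$ (the analogous $C$ for the non-smooth case comes from \eqref{eq:total_variance_ns}). Multiplying by $2^{2j}$, weighting by $2^{-j}$, and summing the $\lfloor\log_2 M\rfloor+1$ levels gives $\sum_{j}2^{-j}\E\|D_{j}\|^{2}\lesssim \lfloor\log_2 M\rfloor\cdot C/l$, and the parameter choice $l \asymp B(\lfloor\log_2 M\rfloor+1)$ collapses this to $\lesssim C/B$, delivering \eqref{eq:estimate_variance} (respectively \eqref{eq:estimate_variance_ns}).

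\textbf{Main obstacle.} The delicate point is the adversarial perturbation $\Delta(x)$, which contributes a \emph{non-vanishing} $d^{2}\Delta^{2}/t^{2}$ to every individual $\E\|\hat{g}_{rd}[n]\|^{2}$. A crude triangle-inequality bound on $\E\|\hat{g}_{rd}[2^{j}l]-\hat{g}_{rd}[2^{j-1}l]\|^{2}$ would propagate this term into $D_{j}$ with a factor $2^{2j}$, after which the geometric sum $\sum_{j}2^{j}$ would blow up as $M$. The fix is to observe that in the nested construction the deterministic shifts $\epsilon_{i}=\hat{g}_{i}-\tilde{g}_{i}$ share samples between $\hat{g}_{rd}[2^{j}l]$ and $\hat{g}_{rd}[2^{j-1}l]$, so in the difference they reduce to a centred antithetic average whose second moment is $\lesssim d^{2}\Delta^{2}/(t^{2}n_{j})$ by independence of the fresh $e_{i}$'s --- restoring the correct $1/B$ scaling.
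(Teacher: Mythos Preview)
Your plan is essentially the paper's: telescope for the expectation identity, plug \eqref{eq:total_bias} at level $2^{\lfloor\log_2 M\rfloor}l$ for the bias, and for the second moment expand $\E\|\hat g_{ml}-\hat g^{0}\|^{2}=\sum_{j}2^{j}\E\|\hat g^{j}-\hat g^{j-1}\|^{2}$, bound each increment by centering at $\nabla f_t$ and invoking \eqref{eq:markov_variance} (resp.\ \eqref{eq:markov_variance_ns}), then absorb the resulting $\lfloor\log_2 M\rfloor$ factor into $l\asymp B\log_2 M$.

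Two remarks. First, in the paper's nested construction the base $\hat g^{0}=\hat g_{rd}[l]$ and the correction $D_{J}$ share the first $l$ samples $(Z_i,e_i)$, so they are \emph{not} independent; your variance-additivity step is unjustified as written. The repair is the crude split $\|a+b\|^{2}\le 2\|a\|^{2}+2\|b\|^{2}$, which is exactly what the paper uses, and only costs a constant. Second, for the adversarial contribution the paper argues directly that $\hat g^{j}-\hat g^{j-1}=\tilde g^{j}-\tilde g^{j-1}$ and then works with the $\tilde g$'s; your route---observing that the $\epsilon_i=\hat g_i-\tilde g_i$ enter the level-$j$ increment as a centered antithetic average over i.i.d.\ directions $e_i$, hence with second moment $\lesssim d^{2}\Delta^{2}/(t^{2}2^{j}l)$---is a different (and valid) way to obtain the required $1/(2^{j}l)$ decay for that piece.
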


\begin{proof}
    Recall that $\hat{g}_{ml}$ is a sum of a baseline estimate $\hat{g}_{rd}[l] \overset{\eqref{def:avg_hat_g}}{=} \hat{g}^0$ and a refining term $2^{J} [\hat{g}^{J}  - \hat{g}^{J - 1}]$.
    To show that $\E[\hat{g}_{ml}] = \E \hat{g}^{\lfloor \log_2 M \rfloor}$, then, we use the law of total expectation:
    \begin{align}
        \label{eq:good_on_avg}
        \E[\hat{g}_{ml}] &
        =
        \E\left[\E_{J}[\hat{g}_{ml}]\right] 
        =
        \E[\hat{g}^0] + \sum\limits_{j=1}^{\lfloor \log_2 \batchbound \rfloor} \Prob\{J = j\} \cdot 2^j \E[\hat{g}^j - \hat{g}^{j-1}]
        \\ \notag
        &=
        \E[\hat{g}^0] + \sum\limits_{j=1}^{\lfloor \log_2 \batchbound \rfloor} \E[\hat{g}^j  - \hat{g}^{j-1}] 
        =
        \E \hat{g}^{\lfloor \log_2 M \rfloor}\,.
    \end{align}

    This immediately helps us prove the statement \eqref{eq:estimate_bias}:
    \[
        \norm{ \nabla \avgf(x) - \E\hat{g}_{ml}}^2
        = 
        \norm{ \nabla \avgf(x) - \E\qty[\hat{g}^{\lfloor \log_2 \batchbound \rfloor}]}^2
        \overset{\eqref{eq:total_bias}}{\le} 
        \frac{2 d^2 \Delta^2}{t^2} + \frac{2 d C_1 \tau \omax}{t^2 2^{\lfloor \log_2 \batchbound \rfloor} l}
        \overset{l \geq B}{\lesssim} 
        \frac{d^2 \Delta^2}{t^2} + \frac{d \tau \omax}{t^2 M B}.
    \]

    Proving the statement of \eqref{eq:estimate_variance} we also start with total expectation:
    \begin{eqnarray*}
        &&\E[\norm{ \nabla f(x) - \hat{g}_{ml}}^2]
        \\
        &\overset{\eqref{eq:sum_sqr}}{\leq}&
        2\E[\norm{ \nabla f(x) - \hat{g}^0}^2] + 2\E[\| \hat{g}_{ml} - \hat{g}^0\|^2]
        \\
        &=&
        2\E[\norm{ \nabla f(x) - \hat{g}^0}^2] + 2 \sum\nolimits_{j=1}^{\lfloor \log_2 \batchbound \rfloor} \Prob\{J = j\} \cdot 4^j \E[\|\hat{g}^j  - \hat{g}^{j-1}\|^2] \\
        &=&
        2\E[\norm{ \nabla f(x) - \hat{g}^0}^2] + 2\sum\nolimits_{j=1}^{\lfloor \log_2 \batchbound \rfloor} 2^j \E[\|\hat{g}^j  - \hat{g}^{j-1}\|^2] 
        \\
        &\overset{\circledOne}{=}&
        2\E[\norm{ \nabla f(x) - \hat{g}^0}^2] + 2\sum\nolimits_{j=1}^{\lfloor \log_2 \batchbound \rfloor} 2^j \E[\|\tilde{g}^{j}  - \tilde{g}^{j-1}\|^2]
        \\
        &\overset{\eqref{eq:sum_sqr}}{\leq}&
        2\E[\norm{ \nabla f(x) - \hat{g}^0}^2] + 4\sum\nolimits_{j=1}^{\lfloor \log_2 \batchbound \rfloor} 2^j \left(\E\|\tilde{g}^j - \E_e g^j\|^2 + \E\|\E_e g^{j-1}  - \tilde{g}^{j-1}\|^2 \right)
        \\
        &\leq&
        2\E[\norm{ \nabla f(x) - \hat{g}^0}^2] + 16\sum\nolimits_{j=0}^{\lfloor \log_2 \batchbound \rfloor} 2^j \E[\| \E_e g^{j}  - \tilde{g}^{j}\|^2]
        \\
        &\overset{\eqref{eq:total_variance}, \eqref{eq:markov_variance}}{\lesssim}&
        2 \left[\frac{d^2 \Delta ^ 2}{t^2} + \frac{d \left( d + \tau \right) \omax}{t^2 l} + \frac{d ^2 L^2 t^2}{l} + \frac{d}{l} \cdot \norm{ \nabla f }^2\right] +
        \\ 
        && 16\sum\nolimits_{j=0}^{\lfloor \log_2 \batchbound \rfloor} 2^j \left [ \frac{d \left( d + \tau \right) \omax}{t^2 2^j l} + \frac{d^2 L^2 t^2}{2^j l} + \frac{d \norm{ \nabla f } ^ 2}{2^j l} \right ]
        \\
        & \overset{l \ge \log_2 M \cdot B}{\lesssim} &
        2 \left[\frac{d^2 \Delta ^ 2}{t^2} + \frac{d \left( d + \tau \right) \omax}{t^2 B} + \frac{d ^2 L^2 t^2}{B} + \frac{d}{B} \cdot \norm{ \nabla f }^2\right] +
        \\
        && 16 \left [ \frac{d \left( d + \tau \right) \omax}{t^2 B} + \frac{d^2 L^2 t^2}{B} + \frac{d \| \nabla f \| ^ 2}{B} \right ]
        \\
        & \lesssim &
        \frac{d^2 \Delta ^ 2}{t^2} + \frac{d \left( d + \tau \right) \omax}{t^2 B} + \frac{d ^2 L^2 t^2}{B} + \frac{d}{B} \norm{ \nabla f }^2,
    \end{eqnarray*}
    where $\circledOne$ uses that $\hat{g}^j  - \hat{g}^{j-1} = \tilde{g}^{j}  - \tilde{g}^{j-1}$, since $\tilde{g}^{j} - \hat{g}^{j} \overset{\eqref{eq:adversarial_bias}}{=} \tilde{g}^{j-1} - \hat{g}^{j-1}$.

    The proof of \eqref{eq:estimate_variance_ns} is exactly the same, replacing \eqref{eq:total_variance} and \eqref{eq:markov_variance} with \eqref{eq:total_variance_ns} and \eqref{eq:markov_variance_ns}.
    \begin{eqnarray*}
        \E[\norm{ \nabla f(x) - \hat{g}_{ml}}^2] 
        \lesssim
        \frac{d^2 \Delta ^ 2}{t^2} + \frac{d \left( d + \tau \right) \omax}{t^2 B} + \frac{d G^2}{B}.
    \end{eqnarray*}
\end{proof}

\subsection{Proof of \texorpdfstring{\Cref{th:acc}}{Theorem 1}}
\label{sec:proof:th:strongly_convex_upper_bound}
The proof of \Cref{th:acc} requires two technical Lemmas. 
\begin{lemma}
    \label{lem:tech_lemma_acc_gd}
    Assume \Cref{as:lipsh_grad,as:strong_conv}. Then for the iterates of \Cref{alg:ZOM_ASGD} with $\theta = (p \eta^{-1} - 1) / (\beta p \eta^{-1} - 1)$, $\theta > 0$, $\eta \geq 1$, $p > 0$ and arbitrary $\alpha > 0$ it holds that
    \begin{align}
        \label{eq:acc_temp4}
        \E_k[\norm{x^{k+1} - x^*}^2]
        \leq&
        (1 + \alpha p \gamma \eta)( 1 - \beta) \norm{ x^k - x^*}^2 + (1 + \alpha p \gamma \eta) \beta\|x^k_g - x^*\|^2 
        \notag\\
        &
        + (1 + \alpha p \gamma \eta) (\beta^2 - \beta )\norm{x^k - x^k_g}^2
        + p^2 \eta^2 \gamma^2 \EE_{k}[\norm{ \hat{g}^k }^2] 
        \notag\\
        &
        - 2 \eta^2 \gamma \langle \nabla f(x^k_g), x^k_g + (p \eta^{-1} - 1)  x^k_f - \eta^{-1} p x^*\rangle 
        \notag\\
        &
        + \frac{p \eta \gamma}{\alpha} \norm{\EE_k\qty[\hat{g}^k] - \nabla f(x^k_g)}^2\,.
    \end{align}

\end{lemma}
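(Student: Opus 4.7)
The plan is first to collapse the three-line update of \Cref{alg:ZOM_ASGD} into a simpler momentum-like recursion of the form $x^{k+1} = \beta x^k_g + (1-\beta) x^k - \eta p \gamma \hat{g}^k$, and then to perform a textbook bias-variance expansion of $\|x^{k+1} - x^*\|^2$. The specific choice $\theta = (p\eta^{-1}-1)/(\beta p \eta^{-1} - 1)$ appears for no geometric reason inside the proof; it is used precisely to make this collapse happen cleanly.

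First, I would substitute $x^{k+1}_f = x^k_g - p\gamma \hat{g}^k$ from line~\ref{line_acc_2} into the $x^{k+1}$ update on line~\ref{line_acc_3}, obtaining a linear combination $(\eta + (1-p)\beta)\, x^k_g + (p-\eta)\, x^k_f + (1-p)(1-\beta)\, x^k$ minus the stochastic term $\eta p \gamma \hat{g}^k$. Then, using $x^k_f = (x^k_g - (1-\theta) x^k)/\theta$ derived from line~\ref{line_acc_1}, I eliminate $x^k_f$. The tuning of $\theta$ gives the key identity $(p-\eta)/\theta = \beta p - \eta$, which is exactly what is needed to collapse the coefficient of $x^k_g$ down to $\beta$ and that of $x^k$ down to $1-\beta$, yielding the reduced recursion.

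With the clean recursion in hand, I would subtract $x^*$ and expand $\|x^{k+1} - x^*\|^2 = \|A_k\|^2 - 2\eta p \gamma \langle A_k, \hat{g}^k\rangle + \eta^2 p^2 \gamma^2 \|\hat{g}^k\|^2$ with $A_k := \beta(x^k_g - x^*) + (1-\beta)(x^k - x^*)$. The algebraic identity $\|\beta u + (1-\beta) v\|^2 = \beta\|u\|^2 + (1-\beta)\|v\|^2 - \beta(1-\beta)\|u-v\|^2$ immediately produces the first three terms of the lemma, noting that $\beta^2 - \beta = -\beta(1-\beta)$. For the cross term I would take $\E_k$, use that $A_k$ is $\mathcal{F}_k$-measurable, and split $\E_k \hat{g}^k = \nabla f(x^k_g) + (\E_k \hat{g}^k - \nabla f(x^k_g))$. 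On the gradient part, reusing the same algebra from the first step lets me rewrite $\beta x^k_g + (1-\beta)x^k$ as $\tfrac{\eta}{p}x^k_g + \tfrac{p-\eta}{p}x^k_f$, converting $-2\eta p\gamma \langle A_k, \nabla f(x^k_g)\rangle$ into exactly the $-2\eta^2\gamma \langle \nabla f(x^k_g), x^k_g + (p\eta^{-1} - 1)x^k_f - \eta^{-1} p x^*\rangle$ term of the statement. On the stochastic residual, Young's inequality $-2\langle u, v\rangle \leq \alpha \|u\|^2 + \alpha^{-1}\|v\|^2$ applied with weight $\alpha p\gamma\eta$ inflates $\|A_k\|^2$ by the factor $(1 + \alpha p\gamma\eta)$ and produces the penalty $\tfrac{p\gamma\eta}{\alpha}\|\E_k\hat{g}^k - \nabla f(x^k_g)\|^2$.

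The main obstacle will be the algebraic collapse in step one: verifying that the prescribed $\theta$ makes $(p-\eta)/\theta = \beta p - \eta$, and that this single identity simultaneously forces the coefficients of $x^k_g$ and $x^k$ to become $\beta$ and $1-\beta$ while remaining consistent with the rewriting $\beta x^k_g + (1-\beta)x^k = \tfrac{\eta}{p}x^k_g + \tfrac{p-\eta}{p}x^k_f$ needed to match the inner-product term in the statement. Once that bookkeeping is done, the remainder is a routine Young/convex-combination expansion that reproduces the six terms on the right-hand side of the lemma in order.
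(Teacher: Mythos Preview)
Your proof plan is correct. The paper itself does not give an independent proof of this lemma; it simply cites Lemmas~5 and~6 of \citet{beznosikov2024first} and notes the change of notation. Your argument---collapsing lines~\ref{line_acc_1}--\ref{line_acc_3} into $x^{k+1} = \beta x^k_g + (1-\beta)x^k - \eta p\gamma \hat g^k$ via the identity $(p-\eta)/\theta = \beta p - \eta$, then expanding $\|x^{k+1}-x^*\|^2$ with the convex-combination identity and Young's inequality---is exactly the standard route and matches what one finds in that reference. The algebraic verifications you flag as the ``main obstacle'' (namely $(p-\eta)/\theta = \beta p - \eta$ and $p[\beta x^k_g + (1-\beta)x^k] = \eta x^k_g + (p-\eta)x^k_f$) both go through directly from the definition of $\theta$, so there is no hidden difficulty.
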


\begin{lemma}
    \label{lem:tech_lemma_acc_gd_2}
    Assume \Cref{as:lipsh_grad,as:strong_conv}. Let problem \eqref{eq:erm} be solved by \Cref{alg:ZOM_ASGD}. Then for any $u \in \R^d$, we get
    \begin{align*}
        \EE_k\qty[f(x^{k+1}_f)]
        \leq&
        f(u) - \langle \nabla f(x^k_g), u - x^k_g \rangle - \frac{\mu}{2} \norm{ u - x^k_g}^2  - \frac{\gamma}{2} \|\nabla f(x^k_g)\|^2 
        \\
        &
        + \frac{\gamma}{2} \norm{\EE_k\qty[\hat{g}^k] - \nabla f(x^k_g) }^2 + \frac{L \gamma^2 }{2}\EE_k\qty[\| \hat{g}^k\|^2].
    \end{align*}
\end{lemma}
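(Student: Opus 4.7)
The plan is to combine three ingredients: the descent inequality from smoothness applied to the $x_f$-update, the strong-convexity inequality at $x^k_g$ against an arbitrary comparison point $u$, and Young's inequality to split off the bias of the MLMC gradient estimator. This is the standard pattern for accelerated biased-SGD analyses, adapted here so that $u$ can later be chosen freely (e.g.\ $u = x^*$ or an interpolation) when this lemma is plugged into the main recursion alongside Lemma~\ref{lem:tech_lemma_acc_gd}.

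First, I would apply the descent lemma implied by \Cref{as:lipsh_grad} to the step $x^{k+1}_f = x^k_g - p\gamma\,\hat{g}^k$ from line~\ref{line_acc_2} of \Cref{alg:ZOM_ASGD}, giving
\begin{equation*}
f(x^{k+1}_f) \le f(x^k_g) - p\gamma\,\langle \nabla f(x^k_g),\,\hat{g}^k\rangle + \tfrac{L p^2 \gamma^2}{2}\,\|\hat{g}^k\|^2 .
\end{equation*}
Taking $\EE_k[\cdot]$ and using that $x^k_g$ is $\mathcal{F}_k$-measurable turns the cross term into $-p\gamma\,\langle \nabla f(x^k_g), \EE_k[\hat{g}^k]\rangle$. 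Next, I would invoke \Cref{as:strong_conv} at the anchor point $x^k_g$, which for any $u \in \R^d$ yields
\begin{equation*}
f(x^k_g) \le f(u) - \langle \nabla f(x^k_g),\, u - x^k_g\rangle - \tfrac{\mu}{2}\,\|u - x^k_g\|^2 ,
\end{equation*}
and substitute this into the previous inequality.

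Finally, I would handle the stochastic inner product by writing
\begin{equation*}
-\langle \nabla f(x^k_g),\,\EE_k[\hat{g}^k]\rangle = -\|\nabla f(x^k_g)\|^2 + \langle \nabla f(x^k_g),\, \nabla f(x^k_g) - \EE_k[\hat{g}^k]\rangle ,
\end{equation*}
and applying Young's inequality $2\langle a,b\rangle \le \|a\|^2 + \|b\|^2$ to the residual inner product. The $\tfrac{1}{2}\|\nabla f(x^k_g)\|^2$ produced by Young cancels half of the leading $\|\nabla f(x^k_g)\|^2$, leaving the negative $-\tfrac{\gamma}{2}\|\nabla f(x^k_g)\|^2$ term, while the other half of Young's yields the bias term $\tfrac{\gamma}{2}\,\|\EE_k[\hat{g}^k] - \nabla f(x^k_g)\|^2$; the smoothness residual becomes the $\tfrac{L\gamma^2}{2}\EE_k[\|\hat{g}^k\|^2]$ term. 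Collecting gives the displayed inequality. The main obstacle is pure bookkeeping: one must carry the $p$-factor from the $x_f$-update through the Young step consistently with the parametrization in \Cref{tab:sgd_params} so that the coefficients in the conclusion end up as $\gamma/2$ and $L\gamma^2/2$ exactly as stated; everything else is a direct application of the two assumptions.
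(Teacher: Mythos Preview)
Your proposal is correct and is exactly the standard argument; the paper does not supply its own proof of this lemma but simply cites \citet{beznosikov2024first} (their Lemma~6), whose proof is precisely the three-step pattern you describe: smoothness descent at the $x_f$-update, strong convexity at $x^k_g$ against $u$, and Young's inequality to isolate the bias $\|\EE_k[\hat{g}^k]-\nabla f(x^k_g)\|^2$.

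Your closing remark about the $p$-factor is well taken and worth making explicit rather than leaving as ``bookkeeping'': as stated in the paper, the lemma has coefficients $\gamma/2$ and $L\gamma^2/2$, but line~\ref{line_acc_2} of \Cref{alg:ZOM_ASGD} reads $x^{k+1}_f = x^k_g - p\gamma\,\hat{g}^k$, so carrying your computation through gives $p\gamma/2$ and $Lp^2\gamma^2/2$. That is in fact exactly what appears when the lemma is \emph{applied} in \eqref{eq:tmp_x_star}--\eqref{eq:tmp_x_f}. In other words, the lemma is quoted from \citet{beznosikov2024first} with a generic stepsize called $\gamma$, and the substitution $\gamma\mapsto p\gamma$ is made silently at the point of use; your derivation matches the used form, not the displayed one. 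There is no missing idea here---just this notational mismatch between the lemma statement and the algorithm---and you have already spotted it.
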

These are proven in \citet{beznosikov2024first} as Lemmas 5 and 6, with a slightly different notation: $\hat{f}$ corresponds to $f$ and $\hat{g}$ to $g$.
\begin{lemma}[stepsize tuning]
    \label{lem:solving_recursion}
    Given an optimization error after $N$ iterations bounded by
    \[r^N \le \exp(-N \Gamma a) r^0 + \Gamma b\]
    and an upper bound on stepsize $\Gamma \leq \frac{1}{u}$ there exists a constant stepsize $\Gamma_0 \leq \frac{1}{u}$, such that 
    \begin{equation*}
        r^N = \tilde{\mathcal{O}}\left(\exp\left( -\frac{Na}{u}\right) r^0 + \frac{b}{aN}\right)
    \end{equation*} 
    Equivalently, the number of iterations to get $r^N \lesssim \varepsilon$:
    \begin{equation}
        \label{eq:lem_num_iter}
        N = \tilde{\mathcal{O}}\left( \frac{u}{a} \ln \varepsilon^{-1} + \frac{b}{a \varepsilon} \right)
    \end{equation}
\end{lemma}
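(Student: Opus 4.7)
The plan is to choose the constant stepsize $\Gamma_0$ so as to balance the exponentially decaying term $\exp(-N\Gamma a)r^0$ against the constant-floor term $\Gamma b$, subject to the hard cap $\Gamma \le 1/u$. A natural candidate is
\[
\Gamma_0 := \min\!\left\{\tfrac{1}{u},\; \tfrac{1}{Na}\ln\!\bigl(\max\{2,\, r^0 a N / b\}\bigr)\right\},
\]
which by construction respects the stepsize cap, so the given per-iteration bound applies unchanged.

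The argument then splits into two cases according to which branch of the minimum is active. In the \emph{interior} case $\Gamma_0 = \tfrac{1}{Na}\ln(r^0 aN/b)$, direct substitution yields $\exp(-N\Gamma_0 a)r^0 = b/(aN)$ and $\Gamma_0 b = b\ln(r^0 aN/b)/(aN)$, both of which are $\tilde{\mathcal{O}}(b/(aN))$. In the \emph{capped} case $\Gamma_0 = 1/u$, the exponential term is exactly $\exp(-Na/u)r^0$, matching the target bound, while the floor term $b/u$ is dominated by $\tfrac{b}{Na}\ln(r^0 aN/b) = \tilde{\mathcal{O}}(b/(aN))$ precisely because $1/u \le \tfrac{1}{Na}\ln(r^0 aN/b)$ in this regime. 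Taking the pointwise maximum over the two cases yields the asserted bound $r^N = \tilde{\mathcal{O}}(\exp(-Na/u)r^0 + b/(aN))$.

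For the iteration complexity, I would simply invert this bound: requiring $r^N \lesssim \varepsilon$ forces each of the two summands to be $\lesssim \varepsilon$ separately, which gives $N \gtrsim \tfrac{u}{a}\ln(r^0/\varepsilon)$ from the exponential piece and $N \gtrsim b/(a\varepsilon)$ from the additive piece; summing them recovers the stated $\tilde{\mathcal{O}}\bigl(\tfrac{u}{a}\ln\varepsilon^{-1} + \tfrac{b}{a\varepsilon}\bigr)$. The only bookkeeping subtlety -- and the closest thing to an obstacle in what is otherwise a routine balancing argument -- is verifying that the logarithmic factor $\ln(r^0 aN/b)$ absorbed inside $\tilde{\mathcal{O}}$ really is polylogarithmic in its argument, as required by the definition in \Cref{tab:definitions}. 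Substituting the target $N$ back into that logarithm turns it into $\operatorname{polylog}(1/\varepsilon)$, which is polylogarithmic in $1/\varepsilon$ and hence in $b/(aN) \simeq \varepsilon$, confirming the absorption and completing the argument.
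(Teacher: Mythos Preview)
Your proposal is correct and follows essentially the same approach as the paper: the identical stepsize choice $\Gamma_0 = \min\{1/u,\ \tfrac{1}{Na}\ln\max(2,\,r^0 aN/b)\}$ and the same two-case split between the capped and interior regimes. The only item the paper adds that you omit is a brief remark that the $N$-dependence of $\Gamma_0$ can be removed by a doubling restart schedule ($N=1,2,4,\dots$), but this is a practical footnote rather than something required by the lemma statement.
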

\begin{proof}
    This setup is a simpler version of the one considered in Section 4 of \citet{stich2019unified} and so we will tune $\Gamma$ similarly to their Lemma 2:
    \[\Gamma := \min\left( \frac{\ln \max (2, a r^0 N / b)}{a N}, \frac{1}{u} \right)\]
    If $\frac{1}{u} < \frac{\ln \max (2, a r^0 N / b)}{a N}$, then 
    $\Gamma := \frac{1}{u}$. 
    \[r^N \leq \exp(\frac{-N a}{u})r^0 + \frac{b}{u} \leq \exp(\frac{-N a}{u})r^0 + \frac{b \ln (\dots)}{aN} = \tilde{\mathcal{O}}\left(\exp\left( -\frac{Na}{u}\right) r^0 + \frac{b}{aN}\right) \]

    Otherwise $\frac{\ln \max (2, a r^0 N / b)}{a N} \leq \frac{1}{u}$ and $\Gamma := \frac{\ln \max (2, a r^0 N / b)}{a N}$, with $\Gamma b = \tilde{\mathcal{O}} (\frac{b}{aN})$ immediately.
    \[\exp(-N \Gamma a) r^0 = \exp(-\ln \max (2, a r^0 N / b)) = \frac{1}{\max (2, a r^0 N / b)}\]
    If $a r^0 N / b > 2$, we also get $\tilde{\mathcal{O}} (\frac{b}{aN})$, else $\frac{1}{2} \leq \frac{b}{a N r^0}$ and we get $\tilde{\mathcal{O}} (\frac{b}{aN})$ as well.

    To conclude the proof we should mitigate the fact that the stepsize currently depends on the number of iterations. This can easily be done via a restart procedure which would run the algorithm for $N = 1, 2, 4, \dots$ iterations with a stepsize $\Gamma(N)$.
\end{proof}

\begin{theorem}[\Cref{th:acc}]
    Let \Crefrange{as:lipsh_grad}{as:noise} hold, and consider problem \eqref{eq:erm} solved by \Cref{alg:ZOM_ASGD}.
    Then, for a suitable choice of hidden parameters (with $p \simeq \frac{B}{B+d}$) and arbitrary choice of free parameters (see \Cref{tab:sgd_params}), it holds that:
    \begin{equation*}
        \EE r^N
        \lesssim
        \exp\left( - \sqrt{\frac{p^2 \mu \gamma N^2}{3}}\right) r^0
        + \frac{p \sqrt{\gamma}}{\mu^{3/2}} \cdot \left[\omax \frac{d \left( d + \tau \right)}{t^2 B}
        + t^2 \frac{L^2 d^2}{B} \right]
        \notag 
        + \frac{\Delta^2 d^2}{\mu^2 t^2} + \frac{Lt^2}{\mu}
    \end{equation*}
    Moreover, for arbitrary $\varepsilon \gtrsim \frac{d \Delta \sqrt{L}}{\mu^{3/2}}$ and an appropriate choice of $t$ and $\gamma$, the number of oracle calls required to ensure $r^N \lesssim \varepsilon$ is bounded by
    \begin{equation*}
        {
            B \cdot \mathcal{\tilde O} \left(\max\Bigl[1, \frac{d}{B}\Bigr] \sqrt{\frac{L}{\mu}}\log \frac{1}{\varepsilon} + \frac{L d \left( d + \tau \right) \omax}{B \mu^{3} \varepsilon^2}  \right)
        } \quad \text{one-point oracle calls}\,.
    \end{equation*}
\end{theorem}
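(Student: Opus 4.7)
Since $\hat{g}_{ml}$ is a biased estimator of $\nabla f_t$ (not of $\nabla f$), the natural plan is to run the Nesterov-style recursion provided by \Cref{lem:tech_lemma_acc_gd,lem:tech_lemma_acc_gd_2} with the function $f$ replaced by the smoothed surrogate $\avgf$: by \eqref{eq:smooth_property_preservation} the surrogate inherits $L$-smoothness and $\mu$-strong convexity, so both lemmas apply verbatim after swapping $\nabla f(x_g^k)$ for $\nabla \avgf(x_g^k)$. I then combine the two inequalities with weights so that the cross term $-2\eta^2\gamma\langle\nabla \avgf(x_g^k),\cdot\rangle$ in \eqref{eq:acc_temp4} is exactly matched by the descent gain from \Cref{lem:tech_lemma_acc_gd_2}, producing a one-step bound on the Lyapunov potential $r^k$ from \eqref{def:rn} (built using $\avgf$) of the form
\begin{equation*}
\E_k[r^{k+1}] \;\le\; (1-c_1\beta)\,r^k \;+\; c_2\,\gamma^2\,\E_k\!\norm{\hat{g}^k}^2 \;+\; c_3\,\gamma\,\norm{\E_k\hat{g}^k-\nabla \avgf(x_g^k)}^2,
\end{equation*}
for explicit universal $c_i$, exactly as in the first-order analysis of \citet{beznosikov2024first}.

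\textbf{Plugging in the MLMC estimates.} The variance $\E_k\|\hat{g}^k\|^2$ is decomposed as $2\,\E_k\|\hat{g}^k-\nabla \avgf\|^2+2\|\nabla \avgf\|^2$ and then \eqref{eq:estimate_variance} from \Cref{lem:expect_bound_grad_appendix} is used together with \eqref{eq:smooth_gradient_norm_lower_bound} to re-express the gradient factor in terms of $\|\nabla f\|^2$ up to an $L^2 t^2$ additive error. The bias term is handled by \eqref{eq:estimate_bias}. The crucial cancellation is the $\frac{d}{B}\|\nabla f\|^2$ term appearing in \eqref{eq:estimate_variance}: after multiplication by $p^2\eta^2\gamma^2$, choosing $p \simeq B/(B+d)$ balances this stochastic term against the deterministic progress contribution $-\gamma\|\nabla \avgf\|^2$ coming from \Cref{lem:tech_lemma_acc_gd_2}, so that it can be absorbed on the left-hand side. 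After this absorption the recursion simplifies to
\begin{equation*}
\E[r^{k+1}] \;\le\; (1-c\beta)\E[r^k] \;+\; \gamma^2\,V \;+\; \gamma\,\mathrm{Bias}^2,
\end{equation*}
where $V$ collects $\frac{d(d+\tau)\omax}{t^2 B}$, $\frac{d^2L^2 t^2}{B}$ and adversarial $d^2\Delta^2/t^2$ contributions, and $\mathrm{Bias}^2$ collects $d\tau\omax/(t^2 MB)$ together with the smoothing gap $L^2 t^2$ from \eqref{eq:smooth_grad_deviation} and $d^2\Delta^2/t^2$.

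\textbf{Unrolling and tuning.} Unrolling the geometric recursion gives $\E r^N \lesssim \exp(-c\beta N)\,r^0 + \gamma V/\beta + \mathrm{Bias}^2/\beta$. Substituting $\beta=\sqrt{4p^2\mu\gamma/3}$ produces the first displayed conclusion of the theorem. I then invoke \Cref{lem:solving_recursion} (with $u=L$, $a=\sqrt{\mu p^2/3}$, and $b$ the collection of noise terms divided by $\sqrt{\gamma}$) to select $\gamma$ optimally; afterwards I optimize $t$ by equalising the smoothing bias $L^2 t^2$ with the variance amplification $\omax/(t^2\cdot\text{batch})$, giving $t^2 \simeq \sqrt{\omax/(LB)}\cdot\sqrt{d+\tau}/\sqrt{d}$ up to log factors. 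The number of iterations then takes the form $\tilde{\mathcal{O}}\bigl(\tfrac{1}{p}\sqrt{L/\mu}\log\tfrac{1}{\varepsilon} + \tfrac{d(d+\tau)L\omax}{p^2 B\mu^3\varepsilon^2}\bigr)$, and with $1/p \simeq 1+d/B$ this becomes the claimed iteration count. Multiplying by the expected per-iteration oracle cost of $\tilde{\mathcal{O}}(B)$ (noted right after \Cref{lem:expect_bound_grad}) yields the oracle bound.

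\textbf{Main obstacle.} The delicate step is the absorption of the stochastic gradient-magnitude term $\tfrac{d}{B}\|\nabla f\|^2$: this is precisely the reason the conventional hypothesis predicted a $d\tau$ scaling, and it is only neutralised by the non-obvious choice $p\simeq B/(B+d)$, which has to be tracked carefully through the coupled momentum identities $\theta = (p\eta^{-1}-1)/(\beta p\eta^{-1}-1)$, $\beta^2 \simeq p^2\mu\gamma$, $\eta^2\simeq 1/(\mu\gamma)$ so that the $(1+\alpha p\gamma\eta)$ prefactor in \eqref{eq:acc_temp4} does not destroy the contraction. A secondary but routine obstacle is justifying that MLMC's data-dependent initial distribution $\xi$ between consecutive iterations does not spoil the telescoping: \Cref{lem:expect_bound_grad_appendix} is stated uniformly in $\xi$, which lets me condition on $\mathcal{F}_k$ and apply the bounds inside $\E_k[\cdot]$ without additional coupling arguments.
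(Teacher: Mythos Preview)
Your high-level strategy---run the accelerated recursion on the smoothed surrogate $\avgf$, insert the MLMC bias/variance bounds from \Cref{lem:expect_bound_grad_appendix}, and choose $p\simeq B/(B+d)$ to cancel the $\tfrac{d}{B}\|\nabla f\|^2$ contribution---is exactly the paper's approach, and the discussion of the ``main obstacle'' is on point. Two places in the parameter tuning, however, do not go through as written.

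First, once you commit to running \Cref{lem:tech_lemma_acc_gd,lem:tech_lemma_acc_gd_2} on $\avgf$, the bias you control is $\|\E_k\hat{g}^k-\nabla \avgf(x_g^k)\|^2$, and by \eqref{eq:estimate_bias} this contains \emph{no} $L^2t^2$ term; the gradient-level smoothing gap from \eqref{eq:smooth_grad_deviation} never enters. What you are missing is the function-level gap: the recursion bounds the potential $r_t^N$ built from $\avgf$, and the theorem is stated for $r^N$ built from $f$. The paper closes this via \eqref{eq:smooth_not_less}--\eqref{eq:smooth_deviation}, yielding the additive constant $Lt^2/\mu$ that appears in the statement. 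That constant is independent of $\gamma$ and $N$, so it is a hard floor on the achievable accuracy and forces $t^2\lesssim \mu\varepsilon/L$. Your proposed $t$, obtained by equalising the two stochastic terms, does not depend on $\varepsilon$; with that choice $Lt^2/\mu$ is a fixed constant and the algorithm cannot reach arbitrarily small $\varepsilon$. The paper instead sets $t$ at the upper end of the admissible window, $t^2\simeq \mu\varepsilon/L$, which is what produces the $L/(\mu^3\varepsilon^2)$ scaling in the final bound.

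Second, the $1/p^2$ in your iteration count is a slip: in \Cref{lem:solving_recursion} both $a\simeq p\sqrt{\mu}$ and $b\simeq \tfrac{p}{\mu^{3/2}}[\cdots]$ carry a factor $p$, so $b/(a\varepsilon)$ is $p$-free. With the erroneous $1/p^2$ and $p\simeq B/(B+d)$ you would overshoot the stochastic term by $(1+d/B)^2$, which for $B=1$ is a spurious extra $d^2$.
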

\begin{proof}
    Applying \Cref{lem:tech_lemma_acc_gd_2} with $u = x^*$ (for arbitrary $x^*$) and $u = x^k_f$ to $f_t$, we get:
    \begin{align}
        \label{eq:tmp_x_star}
        \EE_k\qty[f_t(x^{k+1}_f)]
        \leq&
        f_t(x^*) - \langle \nabla f_t(x^k_g), x^* - x^k_g \rangle - \frac{\mu}{2} \norm{x^* - x^k_g}^2  - \frac{p \gamma}{2} \|\nabla f_t(x^k_g)\|^2 
        \\ \notag
        &+ \frac{p \gamma}{2} \norm{\EE_k\qty[\hat{g}^k] - \nabla f_t(x^k_g) }^2 + \frac{L p^2 \gamma^2 }{2}\EE_k\qty[\| \hat{g}^k\|^2],
    \end{align}
    \begin{align}
        \label{eq:tmp_x_f}
        \EE_k\qty[f_t(x^{k+1}_f)]
        \leq&
        f_t(x^k_f) - \langle \nabla f_t(x^k_g), x^k_f - x^k_g \rangle - \frac{\mu}{2} \norm{ x^k_f - x^k_g}^2  - \frac{p \gamma}{2} \|\nabla f_t(x^k_g)\|^2 
        \\ \notag
        &+ \frac{p \gamma}{2} \norm{\EE_k\qty[\hat{g}^k] - \nabla f_t(x^k_g) }^2 + \frac{L p^2 \gamma^2 }{2}\EE_k\qty[\| \hat{g}^k\|^2].
    \end{align}
    Combining $2 p\gamma \eta \cdot \eqref{eq:tmp_x_star} + 2 \gamma \eta (\eta - p) \cdot \eqref{eq:tmp_x_f} + \eqref{eq:acc_temp4}$ we get:
    \begin{align*}
        \E_k[&\norm{x^{k+1} - x^*}^2 + 2 \gamma \eta^2 f_t(x^{k+1}_f)]
        \\
        \notag \leq&
        (1 + \alpha p \gamma \eta)( 1 - \beta) \norm{ x^k - x^*}^2 + (1 + \alpha p \gamma \eta) \beta\|x^k_g - x^*\|^2 
        \\
        &+ (1 + \alpha p \gamma \eta) (\beta^2 - \beta )\norm{x^k - x^k_g}^2 - 2 \eta^2 \gamma \langle \nabla f_t(x^k_g), x^k_g + (p \eta^{-1} - 1)  x^k_f - \eta^{-1} p x^*\rangle 
        \\
        &+ p^2 \eta^2 \gamma^2 \EE_{k}[\norm{ \hat{g}^k }^2] + \frac{p \eta \gamma}{\alpha} \|\EE_k\qty[\hat{g}^k] - \nabla f_t(x^k_g)\|^2
        \\
        & + 2 p \gamma \eta \Big (f_t(x^*) - \langle \nabla f_t(x^k_g), x^* - x^k_g \rangle - \frac{\mu}{2} \norm{x^* - x^k_g}^2  - \frac{p\gamma}{2} \|\nabla f_t(x^k_g)\|^2 
            \\
        &+ \frac{p\gamma}{2} \norm{\EE_k\qty[\hat{g}^k] - \nabla f_t(x^k_g) }^2 + \frac{L p^2 \gamma^2 }{2}\EE_k\qty[\| \hat{g}^k\|^2]\Big)
        \\
        & + 2 \gamma \eta (\eta - p) \Big ( f_t(x^k_f) - \langle \nabla f_t(x^k_g), x^k_f - x^k_g \rangle - \frac{\mu}{2} \norm{ x^k_f - x^k_g}^2  - \frac{p \gamma}{2} \|\nabla f_t(x^k_g)\|^2 
            \\
            &+ \frac{p \gamma}{2} \norm{\EE_k\qty[\hat{g}^k] - \nabla f_t(x^k_g) }^2 + \frac{L p^2\gamma^2 }{2}\EE_k\qty[\| \hat{g}^k\|^2]  
        \Big)
        \\
        \notag =&
        (1 + \alpha p \gamma \eta)( 1 - \beta) \norm{ x^k - x^*}^2 + 2 \gamma \eta \left( \eta - p\right) f_t(x^{k}_f) + 2 p \gamma \eta f_t(x^*)
        \\
        &\notag
        + \left((1 + \alpha p \gamma \eta) \beta - p\gamma \eta \mu\right)\norm{x^k_g - x^*}^2
        \\
        &\notag
        + (1 + \alpha p \gamma \eta) (\beta^2 - \beta )\norm{x^k - x^k_g}^2 
        - p \gamma^2 \eta^2 \norm{\nabla f_t(x^k_g)}^2 
        \\
        &\notag
        + \left( \frac{p \eta \gamma}{\alpha} + p \gamma^2 \eta^2 \right) \norm{\EE_k\qty[\hat{g}^k] - \nabla f_t(x^k_g) }^2 + \left( p^2 \eta^2 \gamma^2 + p^2 \gamma^3 \eta^2 L \right) \EE_k\qty[\| \hat{g}^k\|^2]
        \\
        \notag \overset{\eqref{eq:inner_prod_and_sqr}}{\leq}&
        (1 + \alpha p \gamma \eta)( 1 - \beta) \norm{ x^k - x^*}^2 + 2 \gamma \eta \left( \eta - p\right)  f_t(x^{k}_f) + 2 p \gamma \eta f_t(x^*)
        \\
        &\notag
        + \left((1 + \alpha p \gamma \eta) \beta - p\gamma \eta \mu\right)\norm{x^k_g - x^*}^2
        \\
        &\notag
        + (1 + \alpha p \gamma \eta) (\beta^2 - \beta )\norm{x^k - x^k_g}^2 
        - p \gamma^2 \eta^2 \norm{\nabla f_t(x^k_g)}^2 
        \\
        &\notag
        + p \eta \gamma \left( \frac{1}{\alpha} + \gamma \eta \right) \norm{\EE_k\qty[\hat{g}^k] - \nabla f_t(x^k_g) }^2 + 2 p^2 \eta^2 \gamma^2 \left(  1 +  \gamma L \right) \EE_k\qty[\| \hat{g}^k - \nabla f_t(x^k_g)\|^2] 
        \\
        &\notag
        + 2 p^2 \eta^2 \gamma^2 \left(  1 +  \gamma L \right) \EE_k\qty[\underbrace{\norm{ \nabla f_t(x^k_g) }^2}_{x^k_g \in \mathcal{F}_k}].
    \end{align*}
    Choosing $\alpha = \frac{\beta}{2p \eta \gamma}$ gives:
    \begin{align*}
        &\beta = \sqrt{4p^2\mu \gamma / 3} \overset{\gamma \leq \tfrac{3}{4L}}{\leq} \sqrt{p^2\mu / L} < 1,\\
        &(1 + \alpha p \eta \gamma) (1 - \beta) = \left(1 + \frac{\beta}{2}\right) \left( 1 - \beta\right) \leq \left( 1 - \frac{\beta}{2}\right), \\
        &\left((1 + \alpha p \eta \gamma) \beta - p\mu \gamma \eta \right) = \left( \beta + \frac{\beta^2}{2} - p \mu \gamma \eta \right) \overset{\beta < 1}{<} \left(\frac{3 \beta}{2} - p\mu \gamma \eta \right) \overset{p \mu \gamma \eta = 3\beta /2}{\leq} 0.
    \end{align*}
    Thus:
    \begin{align*}
        \E_{k}\bigl[\norm{x^{k+1} - x^*}^2 &+ 2\gamma \eta^2 f_t(x^{k+1}_f)\bigr] 
        \\
        \leq&
        (1 - \beta / 2) \norm{ x^k - x^*}^2 + 2 \gamma \eta \left( \eta - p\right)  f_t(x^{k}_f) + 2 p \gamma \eta f_t(x^*)
        \\
        &\notag
        + p \eta^2 \gamma^2 \left( 1 + 2p /\beta \right) \norm{\EE_k\qty[\hat{g}^k] - \nabla f_t(x^k_g) }^2
        \\
        &\notag
        + 2 p^2 \eta^2 \gamma^2 \left(  1 +  \gamma L \right) \EE_k\qty[\norm{ \hat{g}^k - \nabla f_t(x^k_g)}^2] 
        \\
        &\notag
        - p \gamma^2 \eta^2 ( 1 - 2 p (  1 +  \gamma L))  \norm{ \nabla f_t(x^k_g) }^2.
    \end{align*}
    Subtracting $2\gamma \eta^2 f_t(x^*)$ from both sides, we get:
    \begin{align*}
        \E_{k}\bigl[\norm{x^{k+1} - x^*}^2 &+ 2\gamma \eta^2 (f_t(x^{k+1}_f) - f_t(x^*))\bigr]
        \\
        \leq&
        \left( 1 - \beta / 2\right) \norm{ x^k - x^*}^2 + \left( 1 - p/\eta\right) \cdot 2\gamma \eta^2 (f_t(x^k_f) - f_t(x^*))
        \\
        &\notag
        + p \eta^2 \gamma^2 \left( 1 + 2p /\beta \right) \norm{\EE_k\qty[\hat{g}^k] - \nabla f_t(x^k_g) }^2 
        \\
        &\notag
        + 2 p^2 \eta^2 \gamma^2 \left(  1 +  \gamma L \right) \EE_k\qty[\norm{ \hat{g}^k - \nabla f_t(x^k_g)}^2] 
        \\
        &\notag
        - p \gamma^2 \eta^2 ( 1 - 2 p (  1 +  \gamma L))  \norm{ \nabla f_t(x^k_g) }^2
        \\
        \overset{\beta / 2 = p / \eta}{=}&
        \left( 1 - \beta / 2\right) \left[ \norm{ x^k - x^*}^2 + 2\gamma \eta^2 (f_t(x^k_f) - f_t(x^*)) \right]
        \\
        &\notag
        + p \eta^2 \gamma^2 \left( 1 + 2p /\beta \right) \norm{\EE_k\qty[\hat{g}^k] - \nabla f_t(x^k_g) }^2 
        \\
        &\notag
        + 2 p^2 \eta^2 \gamma^2 \left(  1 +  \gamma L \right) \EE_k\qty[\norm{ \hat{g}^k - \nabla f_t(x^k_g)}^2] 
        \\
        &\notag
        - p \gamma^2 \eta^2 ( 1 - 2 p (  1 +  \gamma L))  \norm{ \nabla f_t(x^k_g) }^2.
    \end{align*}

    Applying \Cref{lem:expect_bound_grad_appendix}, one can obtain:
    \begin{align*}
        \E_{k}\bigl[\norm{x^{k+1} - x^*}^2 &+ 2\gamma \eta^2 (\avgf(x^{k+1}_f) - \avgf(x^*))\bigr]
        \\
        \lesssim&
        \left( 1 - \beta / 2\right) \left[ \norm{ x^k - x^*}^2 + 2\gamma \eta^2 (f_t(x^k_f) - f_t(x^*)) \right]
        \\
        &+ p \eta^2 \gamma^2 \left( 1 + 2p /\beta \right) \cdot \left[ \frac{d^2 \Delta^2}{t^2} + \frac{d \tau \omax}{t^2 M B} \right]
        \\
        &+ 2 p^2 \eta^2 \gamma^2 \left(  1 +  \gamma L \right) \cdot \left[\frac{d^2 \Delta ^ 2}{t^2} + \frac{d \left( d + \tau \right) \omax}{t^2 B} + \frac{d ^2 L^2 t^2}{B} + \frac{d}{B} \norm{ \nabla f(x^k_g) }^2\right] 
        \\
        &- p \gamma^2 \eta^2 ( 1 - 2 p (  1 +  \gamma L))  \norm{ \nabla f_t(x^k_g) }^2
        \\
        =& \left/ \frac{1}{M} = p (1 + 2p / \beta)^{-1} \right/
        \\
        =&
        \left( 1 - \beta / 2\right) \left[ \norm{ x^k - x^*}^2 + 2\gamma \eta^2 (f_t(x^k_f) - f_t(x^*)) \right]
        \\
        &+  p^2 \eta^2 \gamma^2 \cdot \left[ \frac{d^2 \Delta^2 M}{t^2} + \frac{d \tau \omax}{t^2 B} \right]
        \\
        &+ 2 p^2 \eta^2 \gamma^2 \left(  1 +  \gamma L \right) \cdot \left[\frac{d^2 \Delta ^ 2}{t^2} + \frac{d \left( d + \tau \right) \omax}{t^2 B} + \frac{d ^2 L^2 t^2}{B} + \frac{d}{B} \norm{ \nabla f(x^k_g) }^2 \right]
        \\
        &\notag
        - p \gamma^2 \eta^2 ( 1 - 2 p (  1 +  \gamma L))  \norm{ \nabla f_t(x^k_g) }^2
        \\
        \overset{\eqref{eq:smooth_gradient_norm_lower_bound}}{\lesssim}&
        \left( 1 - \beta / 2\right) \left[ \norm{ x^k - x^*}^2 + 2\gamma \eta^2 (f_t(x^k_f) - f_t(x^*)) \right]
        \\
        &+  \Delta^2 \cdot \left[ \frac{p^2 \eta^2 \gamma^2 d^2 M + p^2 \eta^2 \gamma^2 (1+ \gamma L) d^2}{t^2} \right]
        \\
        &+ \norm{ \nabla f_t (x_g^k) }^2 \cdot \left[ p^2 \eta^2 \gamma^2 (1 + \gamma L) \frac{d}{B} - p \gamma^2 \eta^2 (1 - 2p(1 + \gamma L)) \right]
        \\
        &+ \omax \cdot \left[ \frac{p^2 \eta^2 \gamma^2 d \tau + p^2 \eta^2 \gamma^2 (1 + \gamma L) d \left( d + \tau \right)}{t^2 B} \right]
        \\
        &\notag 
        + \frac{t^2}{B} \cdot p^2 \eta^2 \gamma^2 (1+ \gamma L) L^2 (d^2 + d)  
        \\
        \overset{\gamma L < 1}{\lesssim}&
        \left( 1 - \beta / 2\right) \left[ \norm{ x^k - x^*}^2 + 2\gamma \eta^2 (f_t(x^k_f) - f_t(x^*)) \right]
        \\
        &+ p^2 \eta^2 \gamma^2 \cdot \left[\omax \frac{d \left( d + \tau \right)}{t^2 B} 
            + t^2 \frac{L^2 d^2}{B}
        + \Delta^2 \frac{d^2 M }{t^2}\right]
        \\
        &+ \norm{ \nabla f(x^k_g) }^2 \cdot p \gamma^2 \eta^2 \underbrace{\left[ -1 + p \left(  1 +  \gamma L \right) \left( 1 + \frac{d}{B} \right) \right]}_{= 0 \text{ for } p \simeq \tfrac{B}{B + d}} 
        \\
        \overset{p \eta \gamma = 3\beta / (2\mu)}{\lesssim}&
        \left( 1 - \beta / 2\right) \left[\norm{ x^k - x^*}^2 + 2\gamma \eta^2 (\avgf(x^k_f) - \avgf(x^*)) \right]
        \\
        &+ \frac{\beta ^ 2}{\mu^2} \cdot \left[\omax \frac{d \left( d + \tau \right)}{t^2 B} 
            + t^2 \frac{L^2 d^2}{B}
        + \Delta^2 \frac{d^2 M }{t^2}\right].
    \end{align*}
    Finally, we perform the recursion and substitute $\beta = \sqrt{4p^2 \mu \gamma /3}$, $\eta = \sqrt{\tfrac{3}{\mu \gamma}}$, ${r^N_t = \norm{x^N - x^*}^2 + \frac{1}{\mu}(f_t(x_f^N) - f_t(x^*))}$:
    \begin{align*}
        \EE r_t^N
        \lesssim&
        \left( 1 - \sqrt{\frac{p^2 \mu \gamma}{3}}\right)^N r_t^0
        \notag\\
        &+ \frac{\beta}{\mu^2} \cdot \left[\omax \frac{d \left( d + \tau \right)}{t^2 B} 
            + t^2 \frac{L^2 d^2}{B}
        + \Delta^2 \frac{d^2 M }{t^2}\right]
        \\
        \lesssim&
        \exp\left( - \sqrt{\frac{p^2 \mu \gamma N^2}{3}}\right) r_t^0
        \notag\\
        &+ \frac{p \sqrt{\gamma}}{\mu^{3/2}} \cdot \left[\omax \frac{d \left( d + \tau \right)}{t^2 B} 
            + t^2 \frac{L^2 d^2}{B}
        + \Delta^2 \frac{d^2 M }{t^2}\right]
        \\
        \overset{\circledOne}{\lesssim}&
        \exp\left( - \sqrt{\frac{p^2 \mu \gamma N^2}{3}}\right) r_t^0
        \\
        &+ \frac{p \sqrt{\gamma}}{\mu^{3/2}} \cdot \left[\omax \frac{d \left( d + \tau \right)}{t^2 B}
        + t^2 \frac{L^2 d^2}{B} \right]
        \\
        &\notag 
        + \frac{\Delta^2 d^2}{\mu^2 t^2},
    \end{align*}
    where $\circledOne$ uses that $M \simeq \frac{1}{p} \left (1 + \frac{1}{\sqrt{\mu \gamma}} \right) \Rightarrow Mp \sqrt{\gamma} \simeq \sqrt{\gamma} + \frac{1}{\sqrt{\mu}} \leq \frac{1}{\sqrt{L}} + \frac{1}{\sqrt{\mu}} \lesssim \frac{1}{\sqrt{\mu}}$

    Recall that $x^*$ is arbitrary. Therefore by setting $x^* = \arg \min f(x)$, we may bound the error for non-smoothed $f$: 
    \begin{align*}
        r^N 
        &=
        \norm{ x^N - x^*}^2 + \frac{6}{\mu} ( f(x^N_f) - f(x^*) )
        \\
        &= 
        \| x^N - x^*\|^2 + \frac{6}{\mu}(\underbrace{f(x^N_f) - \avgf(x^N_f)}_{\le 0 \: \eqref{eq:smooth_deviation}} \underbrace{- f(x^*) + \avgf(x^*)}_{\le L t^2 \:\eqref{eq:smooth_deviation}} ) + \frac{6}{\mu}(\avgf(x^N_f) - \avgf(x^*)) 
        \\
        &\le
        r_t^N+ 6\frac{L t^2}{\mu}
    \end{align*}
    Thus we get 
    \begin{align*}
        \EE r^N
        \lesssim&
        \exp\left( - \sqrt{\frac{p^2 \mu \gamma N^2}{3}}\right) r^0
        \\
        &+ \frac{p \sqrt{\gamma}}{\mu^{3/2}} \cdot \left[\omax \frac{d \left( d + \tau \right)}{t^2 B}
        + t^2 \frac{L^2 d^2}{B} \right]
        \\
        &\notag 
        + \frac{\Delta^2 d^2}{\mu^2 t^2} + \frac{Lt^2}{\mu}
    \end{align*}
    To finish the analysis we need to define $t$ and $\gamma$, as well as the tolerable level of noise $\Delta$. Currently we are left with an expression of form:
    \[\E r^N \lesssim \exp(-N \Gamma a) r^0 + \Gamma b + c, \Gamma \leq \frac{1}{u}\]
    with 
    \begin{align*}
        &\Gamma = \sqrt{\gamma}
        \\
        &u \simeq \sqrt{L}
        \\
        &a \simeq p \sqrt{\mu}
        \\
        &b \simeq \frac{p}{\mu^{3/2}} \cdot \left[\omax \frac{d \left( d + \tau \right)}{t^2 B} + t^2 \frac{L^2 d^2}{B} \right]
        \\
        &c = \frac{\Delta^2 d^2}{\mu^2 t^2} + \frac{Lt^2}{\mu}   
    \end{align*}
    To get $c \lesssim \varepsilon$ we have to bound $t$:
    \begin{equation*}
        \frac{d \Delta}{\mu \sqrt{\varepsilon}} \lesssim t \lesssim \frac{\sqrt{\mu \varepsilon}}{\sqrt{L}}
    \end{equation*}
    Thus we bound the adversarial noise $\varepsilon \gtrsim \frac{d \Delta \sqrt{L}}{\mu^{3/2}} \Leftrightarrow \Delta \lesssim \frac{\varepsilon \mu^{3/2}}{d \sqrt{L}}$.

    Applying \Cref{lem:solving_recursion}, to get $r^N \lesssim \varepsilon$ one would need $N$ iterations:
    \begin{equation}
        N = \mathcal{\tilde{O}}\left( \frac{1}{p} \sqrt{\frac{L}{\mu}}\log \frac{1}{\varepsilon} + \frac{d}{B \mu ^ {2} \varepsilon} \left[ \frac{\left( d + \tau \right) \omax}{t^2} + L^2 t^2 d \right]   \right)
    \end{equation}
    Recalling $p \simeq \frac{B}{B + d}$, as well as setting $t$ to its upper bound, we get the total number of iterations:
    \[N = \mathcal{\tilde{O}}\left( \left[1 + \frac{d}{B}\right] \sqrt{\frac{L}{\mu}}\log \frac{1}{\varepsilon} + \frac{L d \left( d + \tau \right) \omax}{\mu ^ {3} \varepsilon^2 B}\right)\]

    Finally, as noted in \Cref{sec:batching}, each $\hat{g}_{ml}$ uses $\tilde{\mathcal{O}}(B)$ oracle calls, thus the oracle complexity is:
    \[
        B \cdot \mathcal{\tilde O} \left(\max\Bigl[1, \frac{d}{B}\Bigr] \sqrt{\frac{L}{\mu}}\log \frac{1}{\varepsilon} + \frac{L d \left( d + \tau \right) \omax}{B \mu^{3} \varepsilon^2}  \right)
        \quad \text{one-point oracle calls}\,.
    \]
\end{proof}
\subsection{Proof of \texorpdfstring{\Cref{th:acc_ns}}{Theorem 3}}
\begin{theorem}[\Cref{th:acc_ns}]
    Let \Cref{as:lipsh_f,as:strong_conv,as:mixing_time,as:noise} hold, and consider problem \eqref{eq:erm} solved by \Cref{alg:ZOM_ASGD}.
    Then, for a suitable choice of hidden parameters (with $p \simeq 1$) and arbitrary choice of free parameters (see \Cref{tab:sgd_params}), it holds that:
    \begin{equation*}
        \EE r^N
        \lesssim
        \exp\left( - \sqrt{\frac{\mu \gamma N^2}{3}}\right) r^0
        + \frac{\sqrt{\gamma}}{\mu^{3/2}} \cdot \left[\omax \frac{d C_1 \left( d + \tau \right)}{t^2 B} 
        + \frac{G^2 d}{B} \right]
        \notag 
        + \frac{\Delta^2 d^2}{\mu^2 t^2} + \frac{G t}{\mu}
    \end{equation*}
    Moreover, for arbitrary $\varepsilon \gtrsim \qty[\frac{d \Delta G}{\mu^{2}}]^{2/3}$ and an appropriate choice of $t$ and $\gamma$, the number of oracle calls required to ensure $r^N \lesssim \varepsilon$ is bounded by
    \[
        B \cdot \mathcal{\tilde{O}}\left[\sqrt{\frac{\sqrt{d}G^2}{\mu ^ 2 \varepsilon }}\log \frac{1}{\varepsilon} + \frac{d \left( d + \tau \right) \omax G^2}{\mu ^ {4} \varepsilon^3 B} + \frac{G^2 d}{B \mu^2 \varepsilon}\right]
        \quad \text{one-point oracle calls}\,.
    \]
\end{theorem}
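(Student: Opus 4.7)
The plan is to mirror the proof of \Cref{th:acc} with three key modifications reflecting the non-smooth setting: (i) apply the acceleration machinery to the smoothed surrogate $f_t$, which by \eqref{eq:smooth_is_smooth} is $L_t$-smooth with $L_t = \sqrt{d}G/t$ and inherits $\mu$-strong convexity from \eqref{eq:smooth_property_preservation}; (ii) use the non-smooth variance bound \eqref{eq:estimate_variance_ns} rather than \eqref{eq:estimate_variance}; and (iii) convert from $f_t$-error back to $f$-error using the Lipschitz deviation \eqref{eq:smooth_lipsh_deviation}, which costs $Gt/\mu$ instead of $Lt^2/\mu$.

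First I would combine \Cref{lem:tech_lemma_acc_gd} (with $\alpha = \beta/(2p\eta\gamma)$) and two applications of \Cref{lem:tech_lemma_acc_gd_2} (with $u = x^*$ and $u = x^k_f$) applied to $f_t$, producing the same one-step inequality
\begin{align*}
\E_k\bigl[\norm{x^{k+1}-x^*}^2 + 2\gamma \eta^2(f_t(x^{k+1}_f) - f_t(x^*))\bigr]
&\leq (1-\beta/2)\bigl[\norm{x^k - x^*}^2 + 2\gamma \eta^2(f_t(x^k_f) - f_t(x^*))\bigr] \\
&\quad + p\eta^2\gamma^2(1 + 2p/\beta)\norm{\E_k[\hat{g}^k] - \nabla f_t(x^k_g)}^2 \\
&\quad + 2p^2\eta^2\gamma^2(1+\gamma L_t)\E_k\norm{\hat{g}^k - \nabla f_t(x^k_g)}^2 \\
&\quad - p\gamma^2\eta^2(1 - 2p(1+\gamma L_t))\norm{\nabla f_t(x^k_g)}^2
\end{align*}
that appears in the smooth proof, except that $L$ is replaced by $L_t$ and the stepsize constraint becomes $\gamma \leq 3/(4L_t)$.

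Next I would substitute the bias bound \eqref{eq:estimate_bias} and the non-smooth variance bound \eqref{eq:estimate_variance_ns}. The crucial structural difference from \Cref{th:acc} is that \eqref{eq:estimate_variance_ns} contains no $\norm{\nabla f(x^k_g)}^2$ term, so the residual $\norm{\nabla f_t(x^k_g)}^2$ coefficient is $-p\gamma^2\eta^2(1-2p(1+\gamma L_t))$, which is nonpositive whenever $p \lesssim 1$. This removes the need for the $p \simeq B/(B+d)$ balancing used in the smooth case and we can simply take $p \simeq 1$ (hence $M \simeq 1+1/\sqrt{\mu\gamma}$ as before). Collecting terms with $\beta = \sqrt{4p^2\mu\gamma/3}$ and $\eta = \sqrt{3/(\mu\gamma)}$, then iterating the recursion, yields
\begin{equation*}
\E r_t^N \lesssim \exp\!\left(-\sqrt{\tfrac{\mu\gamma N^2}{3}}\right) r_t^0 + \frac{\sqrt{\gamma}}{\mu^{3/2}} \left[\frac{d C_1(d+\tau)\omax}{t^2 B} + \frac{dG^2}{B}\right] + \frac{\Delta^2 d^2}{\mu^2 t^2},
\end{equation*}
where the $Mp\sqrt{\gamma} \lesssim 1/\sqrt{\mu}$ absorption used at the end of \Cref{th:acc}'s proof goes through identically.

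To pass from $r_t^N$ to $r^N$ I would apply \eqref{eq:smooth_not_less} and \eqref{eq:smooth_lipsh_deviation} at $x^*$: $f(x^N_f) - f(x^*) \leq f_t(x^N_f) - f_t(x^*) + Gt$, contributing the extra $Gt/\mu$ in the stated bound. Finally, to extract the oracle complexity, I would apply \Cref{lem:solving_recursion} with $\Gamma = \sqrt{\gamma}$, $u \simeq \sqrt{L_t} = (dG^2/t^2)^{1/4}$, $a \simeq \sqrt{\mu}$, and $b$ the bracketed stochastic term; the deterministic residual $Gt/\mu + \Delta^2 d^2/(\mu^2 t^2) \lesssim \varepsilon$ forces $t \simeq \mu\varepsilon/G$ (which also requires $\Delta \lesssim \mu^2 \varepsilon^{3/2}/(dG)$, equivalently the stated condition $\varepsilon \gtrsim (d\Delta G/\mu^2)^{2/3}$). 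With this $t$, $u \simeq (\sqrt{d}G^2/(\mu^2\varepsilon^2))^{1/2}/\mu^{1/2}$ cleans up to the advertised $\sqrt{\sqrt{d}G^2/(\mu^2\varepsilon)}\log(1/\varepsilon)$ leading term, and multiplying the iteration count by the average batch size $\tilde{\mathcal{O}}(B)$ gives the claimed oracle bound.

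The main obstacle will be verifying the bookkeeping of the $p \simeq 1$ choice through the variance decomposition: in \Cref{th:acc} the carefully chosen $p \simeq B/(B+d)$ was needed precisely to cancel a $\tfrac{d}{B}\norm{\nabla f}^2$ contribution from \eqref{eq:estimate_variance}, and one must confirm that the absence of that term in \eqref{eq:estimate_variance_ns} genuinely permits a constant $p$ without sacrificing the negative $\norm{\nabla f_t}^2$ coefficient. The rest reduces to substituting $L \leadsto L_t = \sqrt{d}G/t$ and $Lt^2 \leadsto Gt$ throughout the tuning step of \Cref{lem:solving_recursion}.
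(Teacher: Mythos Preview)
Your proposal is correct and follows essentially the same approach as the paper: apply the smooth-case machinery to $f_t$ using \eqref{eq:estimate_variance_ns} in place of \eqref{eq:estimate_variance}, observe that the absence of the $\tfrac{d}{B}\norm{\nabla f}^2$ term permits $p\simeq 1$, replace $L\leadsto L_t=\sqrt{d}G/t$, and swap $Lt^2/\mu$ for $Gt/\mu$ via \eqref{eq:smooth_lipsh_deviation}. Your intermediate expression $u \simeq (\sqrt{d}G^2/(\mu^2\varepsilon^2))^{1/2}/\mu^{1/2}$ has a stray factor of $\varepsilon$ (it should be $u=\sqrt{L_t}=(\sqrt{d}G^2/(\mu\varepsilon))^{1/2}$, giving $u/a=\sqrt{\sqrt{d}G^2/(\mu^2\varepsilon)}$), but your final complexity is stated correctly.
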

\begin{proof}
    The proof is almost identical to the smooth case.
    The difference is we use \eqref{eq:estimate_variance_ns} instead of \eqref{eq:estimate_variance}.
    With that $p \simeq 1$ is enough, as the term with $\norm{\nabla f(x^k_g)}$ no longer exists.
    Additionally, $\frac{d^2 L^2 t^2}{B} \rightarrow \frac{d G^2}{B}$.
    Finally, we may use \Cref{lem:tech_lemma_acc_gd_2} as smoothed function is indeed smooth \eqref{eq:smooth_is_smooth}.

    \begin{align*}
        \EE r^N \lesssim&
        \exp\left( - \sqrt{\frac{p^2 \mu \gamma N^2}{3}}\right) r_t^0
        \\
        &+ \frac{p \sqrt{\gamma}}{\mu^{3/2}} \cdot \left[\omax \frac{d C_1 \left( d + \tau \right)}{t^2 B} 
        + \frac{G^2 d}{B} \right]
        \\
        &\notag 
        + \frac{\Delta^2 d^2}{\mu^2 t^2} + \underbrace{\frac{G t}{\mu}}_{\eqref{eq:smooth_lipsh_deviation}}
        \\
        \overset{p \simeq 1}{\simeq}&
        \exp\left( - \sqrt{\frac{\mu \gamma N^2}{3}}\right) r_t^0
        \\
        &+ \frac{\sqrt{\gamma}}{\mu^{3/2}} \cdot \left[\omax \frac{d C_1 \left( d + \tau \right)}{t^2 B} 
        + \frac{G^2 d}{B} \right]
        \\
        &\notag 
        + \frac{\Delta^2 d^2}{\mu^2 t^2} + \frac{G t}{\mu}
    \end{align*}
    Applying \Cref{lem:solving_recursion} with:
    \begin{align*}
        &\Gamma = \sqrt{\gamma}
        \\
        &u \simeq \sqrt{L} \overset{\eqref{eq:smooth_is_smooth}}{\simeq} \sqrt{\frac{\sqrt{d}G}{t}}
        \\
        &a = \sqrt{\mu}
        \\
        &b = \frac{1}{\mu^{3/2}} \cdot \left[\omax \frac{d C_1 \left( d + \tau \right)}{t^2 B} + \frac{G^2 d}{B} \right]
    \end{align*}
    We get that $r^N \lesssim \varepsilon$ takes $N$ iterations:
    \begin{equation*}
        N = \mathcal{\tilde{O}}\left(\sqrt{\frac{\sqrt{d}G}{t \mu}}\log \frac{1}{\varepsilon} + \frac{d}{B \mu ^ {2} \varepsilon} \left[ \frac{\left( d + \tau \right) \omax}{t^2} + G^2 \right]   \right).
    \end{equation*}
    To get $c \lesssim \varepsilon$ we have to bound $t$:
    \begin{equation*}
        \frac{d \Delta}{\mu \sqrt{\varepsilon}} \lesssim t \lesssim \frac{\mu \varepsilon}{G}
    \end{equation*}
    Thus we bound the adversarial noise $\varepsilon \gtrsim \qty[\frac{d \Delta G}{\mu^{2}}]^{2/3} \Leftrightarrow \Delta \lesssim \frac{\varepsilon^{3/2} \mu^{2}}{d G}$.

    Substituting $L = \frac{\sqrt{d}G}{t}$, as well as setting $t$ to its upper bound, we get the total number of iterations:
    \[N = \mathcal{\tilde{O}}\left(\sqrt{\frac{\sqrt{d}G^2}{\mu ^ 2 \varepsilon }}\log \frac{1}{\varepsilon} + \frac{d \left( d + \tau \right) \omax G^2}{\mu ^ {4} \varepsilon^3 B} + \frac{G^2 d}{B \mu^2 \varepsilon}\right).\]

    And the oracle complexity:
    \[
        B \cdot \mathcal{\tilde{O}}\left(\sqrt{\frac{\sqrt{d}G^2}{\mu ^ 2 \varepsilon }}\log \frac{1}{\varepsilon} + \frac{d \left( d + \tau \right) \omax G^2}{\mu ^ {4} \varepsilon^3 B} + \frac{G^2 d}{B \mu^2 \varepsilon}\right)
        \quad \text{one-point oracle calls}\,.
    \]
\end{proof}
\section{Proofs of two-point results}
\label{sec:2p_appendix}
The proofs for one- and two- point feedback will functionally differ only in \Cref{lem:main_ineq} and \Cref{lem:expect_bound_grad_appendix}, while the rest of the machinery will be reused.
\subsection{Inequalities for gradient approximation}
\begin{lemmaprime}{lem:main_ineq}
    \label{lem:main_ineq_two_point}
    Assume \Cref{as:lipsh_grad}, \Cref{as:mixing_time} and \Cref{as:noise}.
    Then the following inequalities hold for any initial distribution $\xi$ on $(\Zset,\Zsigma)$ and for all $x \in \R^d$:
    \begin{align}
        &\label{eq:finite_diff_variance_two_point}
        \textstyle{
            \E[\|\tilde{g}_i - \nabla_{e_i} F_{i}\|^2] \leq \frac{L^2 d^2 t^2}{4}
        }\,,
        \\
        &\label{eq:projected_grad_variance}
        \textstyle{
            \E \norm{ \nabla_{e_i} F_{i} - \nabla_{e_i} f} ^ 2 \le d \tmax
        }\,,
        \\
        &\label{eq:estimator_directional_variance_two_point}
        \textstyle{
            \E \norm{ \tilde{g}^j -\E_e \tilde{g}^j }^2 \le \frac{3}{2^j l} \left[d^2 t^2 L^2/4 + d \tmax + d \norm{ \nabla f }^2 \right]
        }\,,
        \\
        &\label{eq:markov_bias_two_point}
        \textstyle{
            \E_Z \norm{\E_e \tilde{g}^j -  \nabla f_t }^2 \lesssim \frac{\tau}{2^j l} \tmax
        }\,,
        \\
        &\label{eq:total_bias_two_point}
        \textstyle{
            \norm{ \E \hat{g}^j - \nabla f_t } ^ 2 \lesssim \frac{d^2 \Delta^2}{t^2} + \frac{\tau}{2^j l} \tmax
        }\,,
        \\
        &\label{eq:nonadversarial_variance_two_point}
        \textstyle{
            \E\norm{ \tilde{g}^j - \nabla f_t } ^ 2 \lesssim \frac{d^2 t^2 L^2}{2^j l} + \frac{ d + \tau }{2^j l} \tmax + \frac{d}{2^j l}  \norm{ \nabla f}^2
        }\,.
        \\
        &\label{eq:total_variance_two_point}
        \textstyle{
            \E\norm{ \hat{g}^j - \nabla f_t } ^ 2 \lesssim \frac{d^2 \Delta^2}{t^2} + \frac{d^2 t^2 L^2}{2^j l} + \frac{ d + \tau }{2^j l} \tmax + \frac{d}{2^j l}  \norm{ \nabla f}^2
        }\,.
    \end{align}
\end{lemmaprime}
\begin{proof}
    \hfill \break
    We prove all estimates one by one, starting with \eqref{eq:finite_diff_variance_two_point}:
    \begin{eqnarray*}
        &&\E[\|\tilde{g}_i - \nabla_{e_i} F_{i}\|^2] 
        \\
        &\quad \overset{\eqref{def:tilde_g}, \eqref{def:hat_F_directional_gradient}}=&
        d^2\E\left[\norm{\frac{F(x+te_i, Z_{i}) - F(x - te_i, Z_{i})}{2t} e_i - \langle \nabla F(x, Z_{i}), e_i \rangle e_i}^2 \right]
        \\
        &\quad \overset{\eqref{as:lipsh_grad_two} + \eqref{eq:smothness}}{\leq}&
        \frac{L^2 d^2 t^2}{4}.
    \end{eqnarray*}

    Proof of \eqref{eq:projected_grad_variance}:
    \begin{eqnarray*}
        \E \norm{ \nabla_{e_i} F_{i} - \nabla_{e_i} f} ^ 2 
        =
        \E_Z \E_{e_i} \norm{ \nabla_{e_i} F_{i} - \nabla_{e_i} f} ^ 2 
        \overset{\eqref{def:directional_gradient}, \eqref{eq:random_projection_norm}}{=}
        d\E_Z \| \nabla F_{i} - \nabla f \| ^ 2 \overset{\eqref{as:noise_two}}{\le} d \tmax.
    \end{eqnarray*}
    Proof of \eqref{eq:estimator_directional_variance_two_point}:
    \begin{eqnarray*}
        \E \norm{ \tilde{g}^j -\E_e \tilde{g}^j }^2
        & \overset{\text{like } \eqref{eq:estimator_directional_variance}}{\le} &
        \frac{1}{2^{2j} l^2} \sum\limits_{i=1}^{2^j l}\E \norm{ \tilde{g}_i  }^2 
        \\
        & \overset{\eqref{eq:sum_sqr}}{\le} &
        \frac{3}{2^{2j} l^2} \sum\limits_{i=1}^{2^j l} \Biggl[\E\qty[\norm{\tilde{g}_i - \nabla_{e_i} F_{i}}^2] +
        \\
        & &\hspace{2cm}\E[\|\nabla_{e_i} F_{i} - \nabla_{e_i} f\|^2] + \E[\|\nabla_{e_i} f\|^2]\Biggr]
        \\
        & \overset{\eqref{eq:finite_diff_variance_two_point}, \eqref{eq:projected_grad_variance}, \eqref{eq:random_projection_norm}}{=} &
        \frac{3}{2^j l} \left[d^2 t^2 L^2/4 + d \tmax + d \norm{ \nabla f }^2 \right].
    \end{eqnarray*}
    Proof of \eqref{eq:markov_bias_two_point}:
    \begin{eqnarray*}
        \E_Z \norm{\E_e \tilde{g}^j -  \nabla f_t }^2
        &\overset{\eqref{def:avg_tilde_g}}{=}&
        \E_Z \norm{\E_e \qty[\frac{1}{2^j l} \sum\limits_{i=1}^{2^j l} \tilde{g}_i] - \nabla f_t }^2 
        \\
        &\overset{\eqref{eq:finite_diff_unbiased_estimator}}{\leq}&
        \E_Z \norm{\frac{1}{2^j l} \sum\limits_{i=1}^{2^j l} \nabla F_t(x, Z_i) - \nabla f_t(x) }^2 
        \\
        &\overset{\circledOne}{\lesssim} &
        \frac{\tau}{2^j l} \tmax,
    \end{eqnarray*}
    where $\circledOne$ uses \eqref{eq:var_bound_any_app_two} for $\nabla F_t, \nabla f_t$. Let us verify that \Cref{as:noise_two} holds:

    Unbiasedness: 
    \begin{multline*}
        \E_\pi \nabla F_t (x, Z) = \E_\pi \nabla \qty[\E_r F(x + tr, Z)] =
        \\
        \E_\pi \E_r \nabla F(x + tr, Z) = \E_r \E_\pi \nabla F(x + tr, Z) = \E_r \nabla f(x + tr) = \nabla f_t(x).
    \end{multline*}
    Variance: 
    \begin{multline*}
        \norm{\nabla F_t(x, Z) - \nabla f_t(x)}^2 
        =
        \norm{\E_r \nabla F(x + tr, Z) - \nabla f(x + tr)}^2
        \overset{\eqref{eq:norm_jensen}}{\leq}
        \\
        \E_r \norm{ \nabla F(x + tr, Z) - \nabla f(x + tr)}^2
        \overset{\eqref{as:noise_two}}{\leq}
        \E_r \tmax
        =
        \tmax.
    \end{multline*}

    Proof of \eqref{eq:total_bias_two_point}:
    \begin{eqnarray*}
        \norm{ \E \hat{g}^j - \nabla f_t } ^ 2 
        & \overset{\eqref{eq:sum_sqr}}{\leq} &
        2 \norm{\E \hat{g}^j - \E \tilde{g}^j }^2 + 2 \|\E \tilde{g}^j -  \nabla f_t \|^2
        \\
        & \overset{\eqref{eq:norm_jensen}}{\leq} &
        2 \norm{\E \hat{g}^j - \E \tilde{g}^j }^2 + 2 \E_Z \|\E_e \tilde{g}^j -  \nabla f_t \|^2
        \\
        & \overset{\eqref{eq:adversarial_bias}, \eqref{eq:markov_bias_two_point}}{\lesssim} &
        \frac{d^2 \Delta^2}{t^2} + \frac{\tau}{2^j l} \tmax.
    \end{eqnarray*}
    Proof of \eqref{eq:nonadversarial_variance_two_point}:
    \begin{eqnarray*}
        \E\norm{ \tilde{g}^j - \nabla f_t } ^ 2 
        & \overset{\eqref{eq:sum_sqr}}{\le} &
        2\E \norm{ \tilde{g}^j -\E_e \tilde{g}^j }^2 + 2\E \|\E_e \tilde{g}^j -  \nabla f_t \|^2
        \\
        & \overset{\eqref{eq:estimator_directional_variance_two_point}, \eqref{eq:markov_bias_two_point}}{\lesssim} &
        \frac{1}{2^j l} \left[d^2 t^2 L^2 + d \tmax + d \norm{ \nabla f  }^2 \right] + \frac{\tau}{2^j l} \tmax
        \\
        & \lesssim &
        \frac{d^2 t^2 L^2}{2^j l} + \frac{ d + \tau }{2^j l} \tmax + \frac{d}{2^j l}  \norm{ \nabla f}^2.
    \end{eqnarray*}
    Proof of \eqref{eq:total_variance_two_point}:
    \begin{eqnarray*}
        \E\norm{ \hat{g}^j - \nabla f_t } ^ 2 
        & \overset{\eqref{eq:sum_sqr}}{\le} &
        2\E \norm{ \hat{g}^j - \tilde{g}^j}^2 + 2\E \norm{ \tilde{g}^j - \nabla f_t}^2
        \\
        & \overset{\eqref{eq:adversarial_bias}, \eqref{eq:nonadversarial_variance_two_point}}{\lesssim} &
        \frac{d^2 \Delta^2}{t^2} + \frac{d^2 t^2 L^2}{2^j l} + \frac{ d + \tau }{2^j l} \tmax + \frac{d}{2^j l}  \norm{ \nabla f}^2.
    \end{eqnarray*}
\end{proof}
\begin{lemma}
    \label{lem:main_ineq_two_ns}
    Assume \Cref{as:lipsh_f_two}, \Cref{as:strong_conv}, \Cref{as:noise_two_ns}.
    Then the following inequalities hold for any initial distribution $\xi$ on $(\Zset,\Zsigma)$ and for all $x \in \R^d$:
    \begin{flalign}
        &\label{eq:estim_norm}
        \textstyle{
            \E \norm{\tilde{g}_i}^2 \lesssim d G^2
        }\,,
        \\
        &\label{eq:estimator_directional_variance_two_point_ns}
        \textstyle{
            \E \norm{ \tilde{g}^j -\E_e \tilde{g}^j }^2 \leq \frac{d G^2}{2^{j} l}
        }\,,
        \\
        &\label{eq:markov_bias_two_point_ns}
        \textstyle{
            \E\norm{\E_e \tilde{g}^j - \nabla f_t}^2 \leq \frac{4 C_1 \tau G^2}{2^j l}
        }\,,
        \\
        &\label{eq:total_bias_two_point_ns}
        \textstyle{
            \norm{\E\hat{g}^j - \nabla f_t }^2 \lesssim \frac{d^2 \Delta^2}{t^2} + \frac{\tau G^2}{2^j l}
        }\,,
        \\
        &\label{eq:total_variance_ns_two}
        \textstyle{
            \E\norm{\hat{g}^j - \nabla f_t }^2 \lesssim \frac{d^2 \Delta^2}{t^2} + \frac{(d + \tau) G^2}{2^j l}
        }\,.
    \end{flalign}
\end{lemma}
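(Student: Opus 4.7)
The plan is to mirror the structure of \Cref{lem:main_ineq_two_point} for the smooth two-point case, replacing the smoothness-based arguments with Lipschitz-based ones borrowed from \Cref{lem:main_ineq_ns}. I would prove the five inequalities in the stated order, since each feeds into the next.

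\emph{Step 1} (proof of \eqref{eq:estim_norm}). I would bound $\E\norm{\tilde g_i}^2$ by combining the $G$-Lipschitzness of $F(\cdot,Z)$ with a concentration-on-the-sphere argument exactly as in the proof of \eqref{eq:finite_diff_norm}. By symmetry of $e_i \sim RS_2^d$, the map $e \mapsto F(x+te,Z_i) - F(x-te,Z_i)$ has zero mean in $e$, and it is $2tG$-Lipschitz under \Cref{as:lipsh_f_two}. Lemma~9 of \citet{shamir2017optimal} then gives $\E_{e_i}[F(x+te_i,Z_i) - F(x-te_i,Z_i)]^2 \lesssim t^2 G^2/d$, and multiplying by $d^2/(4t^2)$ yields $\E\norm{\tilde g_i}^2 \lesssim dG^2$.

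\emph{Step 2} (proof of \eqref{eq:estimator_directional_variance_two_point_ns}). I would copy the proof of \eqref{eq:estimator_directional_variance_ns}: expand $\tilde g^j - \E_e \tilde g^j = (2^j l)^{-1} \sum_i (\tilde g_i - \E_{e_i}\tilde g_i)$; use that the summands are independent and mean-zero in $e$, so the cross terms vanish; then insert \eqref{eq:estim_norm} to obtain $\E\norm{\tilde g^j - \E_e \tilde g^j}^2 \leq dG^2/(2^j l)$.

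\emph{Step 3} (proof of \eqref{eq:markov_bias_two_point_ns}). This is the key step and follows the blueprint of \eqref{eq:markov_bias_two_point}. The Stokes-type symmetrization identity gives $\E_{e_i}\tilde g_i = \nabla F_t(x,Z_i)$, so $\E_e \tilde g^j$ is a batch average of $\nabla F_t(x,Z_i)$ and the claim reduces to \eqref{eq:var_bound_any_app_two} applied to $\nabla F_t(\cdot,Z)$. Its two hypotheses must be verified: unbiasedness $\E_\pi[\nabla F_t(x,Z)] = \nabla f_t(x)$ follows from \Cref{as:noise_two_ns} after interchanging $\E_\pi$, $\E_r$, and $\nabla_x$; the uniform variance bound follows from \eqref{eq:smooth_property_preservation}, which shows that $F_t(\cdot,Z)$ inherits $G$-Lipschitzness and is differentiable, so $\norm{\nabla F_t(x,Z)} \leq G$ and $\norm{\nabla f_t(x)} \leq G$ hold pointwise, giving $\norm{\nabla F_t(x,Z) - \nabla f_t(x)}^2 \leq 4G^2$. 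This yields the claimed $\tau G^2/(2^j l)$ scaling.

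\emph{Steps 4--5} (proofs of \eqref{eq:total_bias_two_point_ns} and \eqref{eq:total_variance_ns_two}). Both are routine triangle-inequality assemblies identical to those in \Cref{lem:main_ineq_two_point}: for the bias, split $\E\hat g^j - \nabla f_t = (\E\hat g^j - \E\tilde g^j) + (\E\tilde g^j - \nabla f_t)$ and bound the two pieces via \eqref{eq:adversarial_bias} and Jensen applied to \eqref{eq:markov_bias_two_point_ns}; for the full variance, split $\hat g^j - \nabla f_t$ into adversarial, directional-variance, and Markov-bias parts and apply \eqref{eq:adversarial_bias}, \eqref{eq:estimator_directional_variance_two_point_ns}, and \eqref{eq:markov_bias_two_point_ns} in turn. \emph{Main obstacle.} The only non-routine part is Step 3: since \Cref{as:noise_two_ns} gives only function-level unbiasedness of $F$, not gradient-level unbiasedness, the uniform variance bound required by \eqref{eq:var_bound_any_app_two} must be manufactured from the Lipschitz hypothesis \Cref{as:lipsh_f_two} via the smoothing identity \eqref{eq:smooth_property_preservation} and the standard interchange of $\nabla_x$ with $\E_r$ and $\E_\pi$ (justified by dominated convergence using the Lipschitz bound as dominator). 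Once this bridging argument is set up, the rest is a direct transcription of the companion lemmas.
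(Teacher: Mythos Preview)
Your proposal is correct and follows essentially the same route as the paper's own proof: both proceed through the five inequalities in order, reuse the sphere-concentration bound \eqref{eq:finite_diff_norm} for \eqref{eq:estim_norm}, exploit independence in $e$ for \eqref{eq:estimator_directional_variance_two_point_ns}, apply \eqref{eq:var_bound_any_app_two} to $\nabla F_t$ with the manufactured variance bound $4G^2$ for \eqref{eq:markov_bias_two_point_ns}, and finish \eqref{eq:total_bias_two_point_ns}--\eqref{eq:total_variance_ns_two} by the same triangle-inequality splits. The only cosmetic difference is in verifying $\E_\pi[\nabla F_t(x,Z)]=\nabla f_t(x)$: the paper reuses the Stokes identity \eqref{eq:finite_diff_unbiased_estimator} and swaps $\E_Z$ with $\E_e$ to reduce to the function-level assumption \Cref{as:noise_two_ns}, whereas you interchange $\E_\pi,\E_r,\nabla_x$ directly---both are valid.
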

\begin{proof}
    \hfill\break
    Proof of \eqref{eq:estim_norm}:
    \begin{eqnarray*}
        \E \norm{\tilde{g}_i}^2
        \overset{\eqref{def:hat_g}}{=}
        \frac{d^2}{4 t^2} \E \left|F(x + te_i, Z_{i}) - F(x - te_i, Z_{i}) \right|^2
        \overset{\text{like } \eqref{eq:finite_diff_norm}}{\lesssim}
        d G^2.
    \end{eqnarray*}

    Proof of \eqref{eq:estimator_directional_variance_two_point_ns}:
    \begin{eqnarray*}
        \E \norm{ \tilde{g}^j -\E_e \tilde{g}^j }^2
        \overset{\text{like } \eqref{eq:estimator_directional_variance}}{\leq}
        \frac{1}{2^{2j} l^2} \sum\limits_{i=1}^{2^j l}  \E_Z\E_e \norm{\tilde{g}_i}^2
        \overset{\eqref{eq:estim_norm}}{\leq}
        \frac{d G^2}{2^{j} l}.
    \end{eqnarray*}
    Proof of \eqref{eq:markov_bias_two_point_ns}:
    \begin{eqnarray*}
        \E\norm{\E_e \tilde{g}^j - \nabla f_t}^2
        &\overset{\eqref{def:avg_tilde_g}}{=}&
        \E\norm{\E_e \qty[\frac{1}{2^j l} \sum\limits_{i=1}^{2^j l} \tilde{g}_i] - \nabla f_t}^2
        \\
        &\overset{\eqref{eq:finite_diff_unbiased_estimator}}{=}&
        \E\norm{\E_e \qty[\frac{1}{2^j l} \sum\limits_{i=1}^{2^j l} \nabla F_t(x, Z_i)] - \nabla f_t}^2
        \\
        &\overset{\circledOne}{\leq}&
        \frac{4 C_1 \tau G^2}{2^j l},
    \end{eqnarray*}
    where $\circledOne$ uses \eqref{eq:var_bound_any_app_two} with $\tmax = 4 G^2$. 
    Let us verify that \Cref{as:noise_two} holds:

    Unbiasedness:
    \begin{align*}
        \E_Z \qty[\nabla F_t (x, Z)]
        &\overset{\eqref{eq:finite_diff_unbiased_estimator}}{=}
        \E_Z \E_e \qty[d \frac{F(x + te, Z) - F(x - te, Z)}{2t} e] 
        \\
        &=
        \E_e \E_Z \qty[d \frac{F(x + te, Z) - F(x - te, Z)}{2t} e] 
        \\
        &\overset{\eqref{as:noise_two_ns}}{=}
        \E_e \qty[d \frac{f(x + te) - f(x - te)}{2t} e] 
        \overset{\eqref{eq:finite_diff_unbiased_estimator}}{=}
        \nabla f_t(x).
    \end{align*}

    Variance: $\norm{\nabla F_t(x, Z) - \nabla f_t(x)} \overset{\eqref{eq:sum_sqr}}{\leq} 2 \norm{\nabla F_t(x, Z)}^ 2 + 2 \norm{\nabla f_t(x)}^2 \overset{\circledTwo}{\leq} 4 G^2$,

    where $\circledTwo$ uses that from \Cref{lem:smoothing_properties} the smoothed $f_t$ and $F_t$ are differentiable, $G$-Lipshitz and thus have norm of their gradients bounded by $G$.

    Proof of \eqref{eq:total_bias_two_point_ns}:
    \begin{eqnarray*}
        \norm{\E\hat{g}^j - \nabla f_t }^2
        &\overset{\eqref{eq:sum_sqr}, \eqref{eq:norm_jensen}}{\leq}&
        2 \left[\E\norm{\hat{g}^j - \tilde{g}^j}^2 + \E_Z\| \E_e \tilde{g}^j - \nabla f_t\|^2\right]
        \\
        &\overset{\eqref{eq:adversarial_bias}, \eqref{eq:estimator_directional_variance_two_point_ns}}{\lesssim}&
        \frac{d^2 \Delta^2}{t^2} + \frac{\tau G^2}{2^j l}.
    \end{eqnarray*}
    Proof of \eqref{eq:total_variance_ns_two}:
    \begin{eqnarray*}
        \E\norm{\hat{g}^j - \nabla f_t }^2
        &\overset{\eqref{eq:sum_sqr}}{\leq}&
        3\E \left[\norm{\hat{g}^j - \tilde{g}^j}^2 + \| \tilde{g}^j -\E_e\tilde{g}^j\|^2 + \|\E_e \tilde{g}^j - \nabla f_t\|^2\right]
        \\
        &\overset{\eqref{eq:adversarial_bias}, \eqref{eq:estimator_directional_variance_two_point_ns}, \eqref{eq:markov_bias_two_point_ns}}{\lesssim}&
        \frac{d^2 \Delta^2}{t^2} + \frac{(d + \tau) G^2}{2^j l}.
    \end{eqnarray*}
\end{proof}
\begin{lemmaprime}{lem:expect_bound_grad_appendix}
    \label{lem:expect_bound_grad_appendix_two}
    Let \Cref{as:mixing_time,as:noise_two} hold.
    For any initial distribution $\xi$ on $(\Zset,\Zsigma)$ the following inequalities hold:

    Under \Cref{as:lipsh_grad}:
    \begin{align}
        \label{eq:lem_var_two}
        &\E[\norm{ \nabla f_t(x) - \hat{g}_{ml}}^2]
        \lesssim
        \frac{d^2 \Delta^2}{t^2} + \frac{d^2 t^2 L^2}{B} + \frac{ d + \tau }{B} \tmax + \frac{d}{B}  \norm{ \nabla f}^2.
        \\
        \label{eq:lem_bias_two}
        &\norm{ \nabla f_t(x) - \E[\hat{g}_{ml}]}^2
        \lesssim
        \frac{d^2 \Delta^2}{t^2} + \frac{\tau}{M B} \tmax.
    \end{align}
    Under \Cref{as:lipsh_f}:
    \begin{align}
        \label{eq:lem_var_two_ns}
        &\E[\norm{ \nabla f_t(x) - \hat{g}_{ml}}^2]
        \lesssim
        \frac{d^2 \Delta^2}{t^2} + \frac{(d + \tau) G^2}{2^j l}.
        \\
        \label{eq:lem_bias_two_ns}
        &\norm{ \nabla f_t(x) - \E[\hat{g}_{ml}]}^2
        \lesssim
        \frac{d^2 \Delta^2}{t^2} + \frac{\tau}{M B} G^2.
    \end{align}
\end{lemmaprime}
\begin{proof}
    The proof is almost identical to \Cref{lem:expect_bound_grad_appendix}, so we will leave the calculations only.

    Proof of \eqref{eq:lem_bias_two}:
    \begin{eqnarray*}
        \norm{ \nabla f_t(x) - \E[\hat{g}_{ml}]}^2
        & \overset{\eqref{eq:good_on_avg}}{=} &
        \norm{ \nabla f_t(x) - \E\qty[\hat{g}^{\lfloor \log_2 M \rfloor}]}^2
        \\
        & \overset{\eqref{eq:total_bias_two_point}}{\lesssim} &
        \frac{d^2 \Delta^2}{t^2} + \frac{\tau}{M B} \tmax.
    \end{eqnarray*}
    Proof of \eqref{eq:lem_bias_two_ns}:
    \begin{eqnarray*}
        \norm{ \nabla f_t(x) - \E[\hat{g}_{ml}]}^2
        & \overset{\eqref{eq:good_on_avg}}{=} &
        \norm{ \nabla f_t(x) - \E\qty[\hat{g}^{\lfloor \log_2 M \rfloor}]}^2
        \\
        & \overset{\eqref{eq:total_bias_two_point_ns}}{\lesssim} &
        \frac{d^2 \Delta^2}{t^2} + \frac{\tau}{M B} G^2.
    \end{eqnarray*}
    
    Proof of \eqref{eq:lem_var_two}:
    \begin{eqnarray*}
        &&\E[\norm{ \nabla f_t(x) - \hat{g}_{ml}}^2]
        \\
        &\leq&
        2\E[\norm{ \nabla f_t(x) - \hat{g}^0}^2] + 2\sum\nolimits_{j=1}^{\lfloor \log_2 \batchbound \rfloor} 2^j \E[\|\tilde{g}^{j}  - \tilde{g}^{j-1}\|^2] 
        \\
        &\leq&
        2\E[\norm{ \nabla f_t(x) - \hat{g}^0}^2] + 4\sum\nolimits_{j=1}^{\lfloor \log_2 \batchbound \rfloor} 2^j \left(\E\|\tilde{g}^j  - \nabla f_t(x)\|^2\!+\!\E\| \nabla f_t(x)\!-\!\tilde{g}^{j-1}\|^2 \right)
        \\
        &\leq&
        2\E[\norm{ \nabla f_t(x) - \hat{g}^0}^2] + 16\sum\nolimits_{j=0}^{\lfloor \log_2 \batchbound \rfloor} 2^j \E[\|\nabla f_t(x)  - \hat{g}^j\|^2]
        \\
        &\overset{\eqref{eq:total_variance_two_point}, \eqref{eq:nonadversarial_variance_two_point}}{\lesssim}&
        \frac{d^2 \Delta^2}{t^2} + \frac{d^2 t^2 L^2}{B} + \frac{ d + \tau }{B} \tmax + \frac{d}{B}  \norm{ \nabla f}^2.
    \end{eqnarray*}
    Proof of \eqref{eq:lem_var_two_ns}:
    \begin{eqnarray*}
        \E[\norm{ \nabla f_t(x) - \hat{g}_{ml}}^2]
        \overset{\eqref{eq:total_variance_ns_two}}{\lesssim}
        \frac{d^2 \Delta^2}{t^2} + \frac{(d + \tau) G^2}{2^j l}.
    \end{eqnarray*}
\end{proof}
\subsection{Proof of \texorpdfstring{\Cref{th:acc_two}}{Theorem 1'}}
\begin{theoremprime}{th:acc}
    Let \Crefrange{as:lipsh_grad_two}{as:noise_two} hold, and consider problem \eqref{eq:erm} solved by \Cref{alg:ZOM_ASGD}.
    Then, for a suitable choice of hidden parameters (with $p \simeq \frac{B}{B+d}$) and arbitrary choice of free parameters (see \Cref{tab:sgd_params}), it holds that:
    \begin{equation*}
        \EE r^N
        \lesssim
        \exp\left( - \sqrt{\frac{p^2 \mu \gamma N^2}{3}}\right) r^0
        + \frac{p \sqrt{\gamma}}{\mu^{3/2}} \cdot \left[\tmax \frac{ d + \tau }{B}
        + t^2 \frac{L^2 d^2}{B} \right]
        + \frac{\Delta^2 d^2}{\mu^2 t^2} + \frac{Lt^2}{\mu}
    \end{equation*}
    Moreover, for arbitrary $\varepsilon \gtrsim \frac{d \Delta \sqrt{L}}{\mu^{3/2}}$ and an appropriate choice of $t$ and $\gamma$, the number of oracle calls required to ensure $r^N \lesssim \varepsilon$ is bounded by
    \begin{equation*}
        B \cdot \mathcal{\tilde O} \left(\max\Bigl[1, \frac{d}{B}\Bigr] \sqrt{\frac{L}{\mu}}\log \frac{1}{\varepsilon} + \frac{(d + \tau) \tmax }{B \mu ^ {2} \varepsilon}  \right)
        \quad \text{two-point oracle calls}\,.
    \end{equation*}
\end{theoremprime}
\begin{proof}
    Replacing \Cref{lem:expect_bound_grad_appendix} with \Cref{lem:expect_bound_grad_appendix_two} in the proof of \Cref{th:acc}, we get:
    \begin{align*}
        \EE r^N
        \lesssim&
        \exp\left( - \sqrt{\frac{p^2 \mu \gamma N^2}{3}}\right) r^0
        \\
        &+ \frac{p \sqrt{\gamma}}{\mu^{3/2}} \cdot \left[\tmax \frac{ d + \tau }{B}
        + t^2 \frac{L^2 d^2}{B} \right]
        \\
        &+ \frac{\Delta^2 d^2}{\mu^2 t^2} + \frac{Lt^2}{\mu}
    \end{align*}

    Applying \Cref{lem:solving_recursion} with:
    \begin{align*}
        &\Gamma = \sqrt{\gamma}
        \\
        &u \simeq \sqrt{L}
        \\
        &a \simeq p \sqrt{\mu}
        \\
        &b \simeq \frac{p}{\mu^{3/2}} \cdot \left[\tmax \frac{ d + \tau }{B}
        + t^2 \frac{L^2 d^2}{B} \right]
    \end{align*}
    We get that $r^N \lesssim \varepsilon$ takes $N$ iterations:
    \[
        N = \mathcal{\tilde{O}}\left( \frac{1}{p} \sqrt{\frac{L}{\mu}}\log \frac{1}{\varepsilon} + \frac{1}{B \mu ^ {2} \varepsilon} \left[  (d + \tau) \tmax + L^2 t^2 d^2 \right]   \right)
    \]
    Bounds on $\Delta$, $t$ and $p$ are inherited: $\varepsilon \gtrsim \frac{d \Delta \sqrt{L}}{\mu^{3/2}} \Leftrightarrow \Delta \lesssim \frac{\varepsilon \mu^{3/2}}{d \sqrt{L}}$, $t \simeq \frac{\sqrt{\mu \varepsilon}}{\sqrt{L}}$, $p \simeq \frac{B}{B + d}$. Thus the total number of iterations is:
    \[
        \mathcal{\tilde{O}}\left(\left[1 + \frac{d}{B}\right] \sqrt{\frac{L}{\mu}}\log \frac{1}{\varepsilon} + \frac{(d + \tau) \tmax }{B \mu ^ {2} \varepsilon}   \right)
    \]
    Finally, the oracle complexity is:
    \[
        B \cdot \mathcal{\tilde O} \left(\max\Bigl[1, \frac{d}{B}\Bigr] \sqrt{\frac{L}{\mu}}\log \frac{1}{\varepsilon} + \frac{(d + \tau) \tmax }{B \mu ^ {2} \varepsilon}  \right)
        \quad \text{two-point oracle calls}\,.
    \]
\end{proof}
\subsection{Proof of \texorpdfstring{\Cref{th:acc_ns_two}}{Theorem 3'}}
\begin{theoremprime}{th:acc_ns}
    Let \Crefrange{as:lipsh_grad_two}{as:noise_two} hold, and consider problem \eqref{eq:erm} solved by \Cref{alg:ZOM_ASGD}.
    Then, for a suitable choice of hidden parameters (with $p \simeq 1$) and arbitrary choice of free parameters (see \Cref{tab:sgd_params}), it holds that:
    \begin{equation*}
        \EE r^N
        \lesssim
        \exp\left( - \sqrt{\frac{p^2 \mu \gamma N^2}{3}}\right) r^0
        + \frac{p \sqrt{\gamma}}{\mu^{3/2}} \cdot G^2 \frac{ d + \tau }{B} 
        + \frac{\Delta^2 d^2}{\mu^2 t^2} + \frac{Gt}{\mu}
    \end{equation*}
    Moreover, for arbitrary $\varepsilon \gtrsim \frac{d \Delta \sqrt{L}}{\mu^{3/2}}$ and an appropriate choice of $t$ and $\gamma$, the number of oracle calls required to ensure $r^N \lesssim \varepsilon$ is bounded by
    \[
        B \cdot \mathcal{\tilde{O}}\left(   \sqrt{\frac{\sqrt{d} G^2}{\mu^2 \varepsilon}}\log \frac{1}{\varepsilon} + \frac{(d + \tau) G^2}{B \mu ^ {2} \varepsilon} \right) \quad \text{two-point oracle calls}\,.
    \]
\end{theoremprime}
\begin{proof}
    Replacing \eqref{eq:lem_var_two} and \eqref{eq:lem_bias_two} with \eqref{eq:lem_var_two_ns} and \eqref{eq:lem_bias_two_ns} in the proof of the smooth case we get:
    \begin{align*}
        \EE r^N
        \lesssim&
        \exp\left( - \sqrt{\frac{p^2 \mu \gamma N^2}{3}}\right) r^0
        \\
        &+ \frac{p \sqrt{\gamma}}{\mu^{3/2}} \cdot G^2 \frac{ d + \tau }{B} 
        \\
        &+ \frac{\Delta^2 d^2}{\mu^2 t^2} + \frac{Gt}{\mu}
    \end{align*}
    Applying \Cref{lem:solving_recursion} with:
    \begin{align*}
        &\Gamma = \sqrt{\gamma}
        \\
        &u \simeq \sqrt{L} \overset{\Cref{lem:smoothing_properties}}{\simeq} \sqrt{\frac{\sqrt{d}G}{t}}
        \\
        &a \simeq p \sqrt{\mu}
        \\
        &b \simeq \frac{p (d + \tau) G^2}{\mu^{3/2} B}
    \end{align*}
    We get that $r^N \lesssim \varepsilon$ takes $N$ iterations:
    \[
        N = \mathcal{\tilde{O}}\left(\sqrt{\frac{\sqrt{d}G}{t \mu}}\log \frac{1}{\varepsilon} + \frac{(d + \tau) G^2}{B \mu ^ {2} \varepsilon}   \right)
    \]

    Bounds on $\Delta$, $t$ and $p$ are inherited: $\varepsilon \gtrsim \qty[\frac{d \Delta G}{\mu^{2}}]^{2/3} \Leftrightarrow \Delta \lesssim \frac{\varepsilon^{3/2} \mu^{2}}{d G}$, $t \simeq \frac{\mu \varepsilon}{G}$, $p \simeq 1$.
    Thus the total number of iterations is:

    \[
        \mathcal{\tilde{O}}\left(   \sqrt{\frac{\sqrt{d} G^2}{\mu^2 \varepsilon}}\log \frac{1}{\varepsilon} + \frac{(d + \tau) G^2}{B \mu ^ {2} \varepsilon} \right)
    \]

    Finally, the oracle complexity is:
    \[
        B \cdot \mathcal{\tilde{O}}\left(   \sqrt{\frac{\sqrt{d} G^2}{\mu^2 \varepsilon}}\log \frac{1}{\varepsilon} + \frac{(d + \tau) G^2}{B \mu ^ {2} \varepsilon} \right) \quad \text{two-point oracle calls}\,.
    \]

\end{proof}
\section{Lower Bounds} \label{sec:lower_bound_appendix}
\subsection{Main theorems}
First, we introduce the results that confirm the optimality of our analysis with a second moment bounds. By this we mean that we check
\begin{align*}
    \E_{\pi} |F(x, Z) - f(x)|^2 < \omax
\end{align*}
instead of \Cref{as:noise} and
\begin{align*}
    \E_{\pi} \norm{\nabla F(x, Z) - \nabla f(x)}^2 < \tmax
\end{align*}
instead of \Cref{as:noise_two}.

Then, we show how to use clipping technique in the construction of the hard instance problems to preserve the lower bounds up to logarithmic factors.

Our main results here are the following two theorems. They show theoretical optimality of our method and analysis in both one-point and two-point regimes.
\begin{theorem}[one-point feedback]
    \label{th:lower_bound_1p}
    For any (possibly randomized) algorithm that solves the problem \eqref{eq:erm}, there exists a function $f$ that satisfies \Crefrange{as:lipsh_grad}{as:noise}, s.t.
    \begin{align*}
        \E\norm{\hat{x}_N - x^*}^2 \gtrsim \frac{\sqrt{d(d + \tau)\omax}}{\mu\sqrt{N}}
        \quad\text{as $N \rightarrow \infty$}.
    \end{align*}

    Consequently, to get to the $\varepsilon$-neighborhood of the solution with one-point feedback the algorithm needs at least
    \begin{align*}    
        N = \Omega\left(\frac{d(d + \tau)\omax}{\mu^2 \varepsilon^2}  \right) \quad \text{one-point oracle calls}.
    \end{align*}
\end{theorem}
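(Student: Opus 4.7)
The plan is to reduce the optimization problem to a multi-way hypothesis testing problem via Fano's inequality. Consider the family of hard instances $\{f_v\}_{v \in V}$ for $V := \{-1, +1\}^d$, with $f_v(x) = \frac{\mu}{2}\norm{x - \delta v}^2$ and optima $x^*_v = \delta v$ for a tuning parameter $\delta > 0$. Each $f_v$ satisfies \Cref{as:lipsh_grad,as:strong_conv} with $L = \mu$. Since the optima are separated by $\norm{x^*_v - x^*_{v'}}^2 = \delta^2 d_H(v, v')$, where $d_H$ is the Hamming distance, any algorithm attaining $\E\norm{\hat{x}_N - x^*}^2 \leq \varepsilon$ yields, via nearest-neighbor decoding in $V$, a tester whose expected Hamming error is $O(\varepsilon/\delta^2)$; for $\delta^2 \simeq \varepsilon/d$ this error is bounded away from $d$, which is what Fano requires.

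For the one-point oracle we adapt the construction from \Cref{sec:experiments}: set $F(x, Z) = \frac{\mu}{2}\norm{x}^2 + \langle x, Z\rangle$, where under hypothesis $H_v$ the Markov chain $\{Z_i\}$ has stationary mean $-\mu\delta v$ and covariance $\sigma^2 I_d$, so that $\E_\pi F(x, Z)$ recovers $f_v$ up to an additive constant. The variance $\sigma^2$ is calibrated together with query clipping (see \Cref{sec:clipping_appendix}) so that $|F - f|^2 \leq \omax$ almost surely. To realize the prescribed mixing time $\tau$, we take $\{Z_i\}$ to be a \emph{lazy} Gaussian: at each step refresh $Z$ afresh with probability $1/\tau$, otherwise keep the previous value; a direct computation verifies \Cref{as:mixing_time} with mixing time $\Theta(\tau)$.

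The heart of the argument is to upper-bound $\KL(P_v^N \Vert P_{v'}^N)$ between the joint laws of the $N$-round adaptive interaction. There are on average $N/\tau$ refresh events, and between refreshes the algorithm queries a frozen $Z$; data processing reduces each refresh period to a bounded informational contribution, since no matter how many scalar measurements $\langle x^k, Z\rangle$ are collected, they cannot be more informative than observing $Z$ itself once. The per-observation KL is controlled by the one-point signal-to-noise ratio $\mu^2\delta^2\norm{v-v'}^2/(d\sigma^2)$, reflecting that a scalar measurement $F(x, Z)$ aligns with the shift direction $v - v'$ only to order $1/\sqrt{d}$. Fano's inequality with $|V| = 2^d$ then forces $\KL \gtrsim d$, and balancing $\delta^2 \simeq \varepsilon/d$ with the noise budget yields $N \gtrsim d\tau\omax/(\mu^2\varepsilon^2)$. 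Combining with the $\tau$-independent bound $\Omega(d^2\omax/(\mu^2\varepsilon^2))$ for $\tau = 1$, obtained from \citet{duchi2015optimal} and extended to the strongly convex setting via a shared quadratic term (\Cref{th:lower_bound_2p_high_dim}), produces the claimed $d(d+\tau)$ scaling.

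The main obstacle is twofold. First, the KL computation under adaptive, history-dependent queries and Markov-correlated noise is delicate: one must combine the KL chain rule along the protocol filtration with a careful coupling-and-data-processing argument showing that between successive refreshes the effective informational contribution is that of a single Gaussian-shift observation, not $\tau$ of them. Second, reconciling the almost-sure bound of \Cref{as:noise} with an unbounded Gaussian $Z$ forces clipping of both the query and the noise; the analysis shows this degrades the bound only by logarithmic factors, as detailed in \Cref{sec:clipping_appendix}.
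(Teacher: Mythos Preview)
Your overall two-regime strategy (prove $d\tau$ and $d^2$ separately, then combine) and the lazy-Gaussian chain construction match the paper. There are, however, two genuine gaps.

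\textbf{The $1/d$ in the KL bound is misattributed.} Your data-processing argument is correct up to the point that, within one refresh period, the algorithm cannot learn more than observing the frozen $Z$ once. But that full-$Z$ KL is $\mu^2\delta^2\|v-v'\|^2/(2\sigma^2)$, with \emph{no} $1/d$ factor. The heuristic ``a scalar measurement aligns with $v-v'$ only to order $1/\sqrt d$'' is a statement about \emph{random} query directions; for a lower bound you must upper-bound the KL over \emph{all} query strategies, and an adversarial algorithm could align $x$ with the signal. The $d$ you need does not come from the scalar nature of the oracle; it comes from the variance budget. With queries clipped to $\|x\|\le R$ you need $R\ge\|x^*_v\|=\delta\sqrt d$ to cover the optima, and then $|F-f|^2\le\sigma^2 R^2\le\omax$ forces $\sigma^2\lesssim\omax/(d\delta^2)$. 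Substituting this into the per-refresh KL and solving gives the correct $\delta^4\simeq\tau\omax/(\mu^2 dN)$. The paper sidesteps this by replacing $\langle x,Z\rangle$ with $\langle S(x),Z+\omega\rangle$ for a \emph{bounded} separable $S$ (so $\|S(x)\|^2\lesssim d\mu^2\delta^4$ uniformly), and then applies Assouad's lemma rather than Fano; the $d$ again enters through the noise constraint $s^2\|S(x)\|^2\le\omax$, not through the KL itself.

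\textbf{The $d^2$ half cites the wrong source.} \Cref{th:lower_bound_2p_high_dim} and \citet{duchi2015optimal} are \emph{two-point} results and yield only $\Omega(d\,\tmax/(\mu^2\varepsilon))$, not $\Omega(d^2\omax/(\mu^2\varepsilon^2))$. For the one-point $d^2$ term you need the point-independent-noise construction of \citet{akhavan2024gradient} (the paper's \Cref{th:lower_bound_1p_high_dim}), where the per-step KL is governed by $\|f_{\omega_1}-f_{\omega_2}\|_\infty^2/\omax$ rather than by a Gaussian mean shift; the two-point argument simply does not produce the extra factor of $d/\varepsilon$.
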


\begin{theorem}[two-point feedback]
    \label{th:lower_bound_2p}
    For any (possibly randomized) algorithm that solves the problem \eqref{eq:erm}, there exists a function $f$ that satisfies \Crefrange{as:lipsh_grad_two}{as:noise_two}, s.t.
    \begin{align*}
        \E\norm{\hat{x}_N - x^*}^2 \gtrsim \frac{(d + \tau)\tmax}{\mu^2N}
        \quad\text{as $N \rightarrow \infty$}.
    \end{align*} 

    Consequently, to get to the $\varepsilon$-neighborhood of the solution with two-point feedback one needs at least
    \begin{align*}
        N = \Omega\left(\frac{(d + \tau)\tmax}{\mu^2 \varepsilon}\right) \quad \text{two-point oracle calls}.
    \end{align*}
\end{theorem}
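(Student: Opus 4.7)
The plan is to establish the $d + \tau$ scaling by combining two complementary hard instances whose individual lower bounds yield $d\tmax/(\mu^2 N)$ and $\tau\tmax/(\mu^2 N)$ separately; since $d + \tau \simeq \max(d, \tau)$, it suffices to take the maximum of the two constructions, which can be embedded into disjoint coordinates of a single hard problem. The remark on clipping after the main-text theorem indicates we work with a second-moment noise assumption and then invoke the clipping argument of \Cref{sec:clipping_appendix} to recover the uniform bound of \Cref{as:noise_two} at the cost of at most logarithmic factors.

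For the dimension-dependent term I would adapt the construction of \citet{duchi2015optimal} to the strongly convex setting, as foreshadowed in the discussion preceding the theorem. Let $\mathcal{V} \subset \{-1, +1\}^d$ be a Gilbert--Varshamov packing with pairwise Hamming distance at least $d/4$, and for each $v \in \mathcal{V}$ define $f_v(x) = \frac{\mu}{2}\norm{x}^2 - \mu\delta\langle v, x\rangle$, so that $x_v^* = \delta v$ and every instance is simultaneously $\mu$-strongly convex and $\mu$-smooth. Choose the oracle $F(x, Z) = f_v(x) + \langle Z, x\rangle$ with $Z$ i.i.d.\ Gaussian of coordinate variance $\tmax/d$, so that \Cref{as:noise_two} holds with parameter $\tmax$. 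Because the instances share their quadratic part, the two-point finite difference $(F(x+te, Z) - F(x-te, Z))/(2t) = \mu\langle x, e\rangle - \mu\delta\langle v, e\rangle + \langle Z, e\rangle$ depends on $v$ only through a deterministic linear shift. A Fano (or Assouad-over-the-hypercube) argument with $\delta \simeq \sqrt{\tmax/(\mu^2 N)}$ then yields $\E\norm{\hat{x}_N - x_v^*}^2 \gtrsim d\tmax/(\mu^2 N)$, matching the $\tau = 1$ rate.

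For the mixing-time-dependent term I would use a one-dimensional Le Cam construction with the two hypotheses $f_\pm(x) = \frac{\mu}{2}(x \mp \delta)^2$ and oracle $F_\pm(x, Z) = f_\pm(x) + Z x$, where $\{Z_i\}$ is the lazy chain on $\rset$ that, at each step, refreshes from $\mathcal{N}(0, \tmax)$ with probability $1/\tau$ and otherwise holds its value; a direct computation shows this chain satisfies \Cref{as:mixing_time} with mixing time $\Theta(\tau)$. Along any trajectory of length $N$ the chain produces only $\Theta(N/\tau)$ fresh samples, so the two-hypothesis test effectively has sample size $N/\tau$ and Le Cam's method yields $\norm{\hat{x}_N - x^*_{\pm}}^2 \gtrsim \tmax \tau/(\mu^2 N)$. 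Embedding this one-dimensional instance in a distinguished coordinate and the $(d-1)$-dimensional packing construction in the remaining coordinates combines both lower bounds inside a single hard problem.

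The main obstacle is the joint-distribution bound under adaptive queries: because the algorithm's iterates depend on the past noise, the conditional law of $(Z_1, \dots, Z_N)$ given the algorithm's transcript is not a product measure, and naive KL tensorization fails. I would handle this using the coupling strategy of \citet{beznosikov2024first}: couple the two hypotheses' chains exactly between refresh times so that the KL contribution is supported only on refresh events, then apply the chain rule of KL divergence restricted to that sparse subsequence. For the quadratic hard instances above, each two-point query is linear in $Z$, so every per-refresh KL contribution reduces to the classical Gaussian calculation $\tfrac{1}{2\tmax}(\text{shift})^2$, which is what lets the argument go through cleanly. Finally, to meet the uniform bound of \Cref{as:noise_two} literally, I would truncate $Z$ at level $\sqrt{\tmax \log N}$; the Gaussian tail makes the total-variation perturbation at most $N^{-c}$, so the resulting bounded-noise lower bound matches the claim up to logarithmic factors, as detailed in \Cref{sec:clipping_appendix}.
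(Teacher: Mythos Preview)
Your overall strategy---splitting into a $d$-term and a $\tau$-term and arguing $d+\tau\simeq\max(d,\tau)$---is exactly what the paper does. The paper, however, treats the two regimes as \emph{separate} hard instances and simply takes the maximum; it does not embed them into disjoint coordinates of a single problem. For the $\tau$-term it invokes the one-dimensional quadratic construction of \citet{beznosikov2024first} verbatim (noting that in one dimension first-order and two-point zero-order information coincide), which is precisely your lazy-chain Le~Cam argument. For the $d$-term it proves \Cref{th:lower_bound_2p_high_dim} by reusing \citet{duchi2015optimal}'s sign-decoding bound (an Assouad-type argument over the \emph{full} hypercube $\{\pm 1\}^d$) after adding a common quadratic part, rather than Fano over a Gilbert--Varshamov packing as you propose. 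Both reductions give the same rate; the paper's route is shorter because it can quote Duchi et~al.'s inequality $\sum_{i}\PP(\operatorname{sign}(\hat{x}_N^i)\neq -\operatorname{sign}(v^i))\geq d(1-\sqrt{2N\delta^2/(ds^2)})$ directly, whereas your Fano approach requires controlling the KL between hypotheses at Hamming distance $\Theta(d)$ and balancing against $\log|\mathcal{V}|$. Your handling of the adaptive-query KL via coupling on refresh events and the clipping step match the paper's treatment.
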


We note that due to the two-part structure of the optimal rates, it is natural to prove both parts separately in a regime where the part becomes dominant. We introduce those regimes:
\begin{itemize}
    \item $\tau \geq d$ --- high-correlation regime
    \item $\tau \leq d$ --- high-dimensional regime
\end{itemize}
Next, we summarize the lower bounds that we claim to hold in each regime:
\begin{table}[h!]
    \captionof{table}{Strongly convex case, lower bounds}
    \centering
    \begin{tabularx}{0.8\textwidth}{cXccXcc}
        \toprule
        \multicolumn{1}{c}{} & {} & \multicolumn{2}{c}{\makecell{\textbf{high-correlation}}} & {} & \multicolumn{2}{c}{\makecell{\textbf{high-dimensional}}} \\
        \midrule
        \textbf{ZO 1-point} & {} & \makecell{\begin{Large}$\frac{d\tau\omax}{\mu^2\varepsilon^2}$\end{Large}}
        & (\textbf{New}, \Cref{th:lower_bound_1p_high_cor})
        & {} & \makecell{\begin{Large}$\frac{d^2\omax}{\mu^2\varepsilon^2}$\end{Large}} &
        \makecell{\citet{akhavan2024gradient} \\ (our \Cref{th:lower_bound_1p_high_dim})} \\
        \midrule
        \textbf{ZO 2-point} & {} & \makecell{\begin{Large}$\frac{\tau\tmax}{\mu^2\varepsilon}$\end{Large}} & \makecell{\citet{beznosikov2024first} \\ (even for \textbf{FO})} & {} & \makecell{\begin{Large}$\frac{d\tmax}{\mu^2\varepsilon}$\end{Large}} & \makecell{\citet{duchi2015optimal} \\ (our \Cref{th:lower_bound_2p_high_dim})} \\
        \bottomrule
    \end{tabularx}
    \label{tab:lower_bound_full}
\end{table}
It becomes obvious that only 1 out of 4 bounds depend on dimension and mixing time simultaneously. For other cases, we can use existing constructions which deal with mixing and zero-order information separately and adapt them to our assumptions. Combining all four bounds, we come up with tight lower bounds in both one-point and two-point settings. Let us discuss the important related results.

\citet{akhavan2024gradient} work with a special case of one-point feedback when noise variables do not depend on query points --- this makes their lower bound applicable to our case. The only factor they do not consider is $\omax$, which, however, appears from their proof if used with scaled Gaussian noise, as well as additional $\mu^2$ factor; see our \Cref{th:lower_bound_1p_high_dim} for the result. In the work of \citet{beznosikov2024first}, a first-order oracle is considered, but the hard instance problem is a 1-dimensional quadratic problem, which makes first-order and zero-order information equivalent. \citet{duchi2015optimal} consider a general convex case of a two-point setting and provide a tight lower bound. However, their proof can be translated for strongly convex problems using the trick of adding a common quadratic part to each of the linear functions from the hard-to-distinguish family. For a more formal reduction, see \Cref{th:lower_bound_2p_high_dim}. 

Finally, we provide a, to the best of our knowledge, novel lower bound in one-point feedback and high-correlation regime.

\begin{theorem}[one-point, high-correlation]
    \label{th:lower_bound_1p_high_cor} Under the conditions of \Cref{th:lower_bound_1p} the following bound holds:
    \begin{align*}
        \E\norm{\hat{x}_N - x^*}^2 \gtrsim \frac{\sqrt{d\tau\omax}}{\mu\sqrt{N}}.
    \end{align*}
\end{theorem}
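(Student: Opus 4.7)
The plan is to apply a Markov-adapted Assouad reduction over a family of shifted quadratic hard instances, with the technical heart of the argument being a new bound on the KL divergence between joint response laws under slowly-mixing noise. For each $v\in\{\pm 1\}^d$ take $f_v(x) = \tfrac{\mu}{2}\norm{x - \delta v}^2$ with a scale parameter $\delta$ to be tuned; each $f_v$ is $\mu$-strongly convex and smooth with minimizer $x^*_v = \delta v$ and $\norm{x^*_v - x^*_{v'}}^2 = 4\delta^2\, d_H(v,v')$. Pair this family with a single one-point oracle $F_v(x,Z) = f_v(x) + \sigma\,\psi(x,Z)$ designed so that \Cref{as:noise} holds at level $\omax$ (if necessary we clip an auxiliary Gaussian perturbation, which costs only polylog factors, see \Cref{sec:clipping_appendix}). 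For the noise process take $\{Z_k\}$ to be the \emph{lazy} chain that redraws from its stationary distribution $\pi$ with probability $1/\tau$ at each step and otherwise stays put; this satisfies \Cref{as:mixing_time} with mixing time $\tau$ and partitions the trajectory into $K\asymp N/\tau$ regeneration blocks of expected length $\tau$.

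Writing $P_v^{(N)}$ for the joint law of the full adaptive query/response transcript under hypothesis $v$, Assouad's lemma yields
\[
\sup_v \E_v\norm{\hat x_N - x^*_v}^2 \;\gtrsim\; d\,\delta^2\,\Bigl(1 - \max_{d_H(v,v')=1}\tvnorm{P^{(N)}_v - P^{(N)}_{v'}}\Bigr),
\]
and Pinsker's inequality reduces the TV bound to a KL bound. For $v,v'$ differing in coordinate $i$ we have $f_v(x) - f_{v'}(x) = \pm 2\mu\delta x_i$, so the single-step KL between responses (using a Gaussian-KL template, legitimised by the clipping step) scales as $\mu^2\delta^2 (x^{(k)}_i)^2/\omax$. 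The main obstacle, and the main novelty, is that these per-step KLs cannot simply be summed over all $N$ queries: within each frozen regeneration block the algorithm sees only one fresh draw of the noise, and a block-level data-processing and coupling argument shows that within-block queries contribute at most a constant factor of additional information on $v$. Summing over the $K\asymp N/\tau$ blocks and restricting to the ball $\norm{x^{(k)}}\le R\simeq\delta\sqrt d$ that contains every minimizer, we arrive at the master inequality
\[
\KL\bigl(P^{(N)}_v \,\|\, P^{(N)}_{v'}\bigr) \;\lesssim\; \frac{N}{\tau}\cdot\frac{\mu^2\delta^2 R^2}{\omax} \;\simeq\; \frac{N\,\mu^2\,\delta^4\,d}{\tau\,\omax}.
\]

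Forcing the right-hand side to be a universal constant gives $\delta^4\lesssim\tau\omax/(Nd\mu^2)$, and taking $\delta^2\simeq\sqrt{\tau\omax/(Nd\mu^2)}$ turns the Assouad bound into
\[
\sup_v \E_v\norm{\hat x_N - x^*_v}^2 \;\gtrsim\; d\,\delta^2 \;\simeq\; \sqrt{\frac{d\,\tau\,\omax}{N\,\mu^2}} \;=\; \frac{\sqrt{d\,\tau\,\omax}}{\mu\,\sqrt{N}},
\]
which is the claim. The crux where this proof departs from the i.i.d.\ one-point lower bound is the block-level KL estimate: a naive application of the chain rule for KL would produce $N$ in place of $N/\tau$ and lose the desired mixing-time reduction, so the Markov structure has to be handled with genuine care (adaptive $x^{(k)}$, frozen within-block $Z_k$, and the clipping-induced bias all enter the argument). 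This is the step where we expect the main technical friction, and is precisely the ``new bound on distances between joint distributions'' flagged in the Discussion.
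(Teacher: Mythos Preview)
Your skeleton matches the paper: Assouad over $\{\pm 1\}^d$, a lazy chain regenerating with probability $1/\tau$, Pinsker, and a block-counting KL bound. But the two steps that carry your KL estimate do not go through for the shifted-quadratic family. First, with $f_v(x)=\tfrac{\mu}{2}\|x-\delta v\|^2$ the signal $f_v(x)-f_{v'}(x)=\pm 2\mu\delta x_i$ is unbounded in $x_i$, and in a minimax lower bound you cannot simply ``restrict to the ball $\|x^{(k)}\|\le R$'': the algorithm is free to query at large $|x_i|$ precisely to amplify this signal. Second, and more seriously, the claim that within-block queries add only a constant factor of information about $v$ is false for any point-independent $\psi$. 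If, say, $\psi(x,Z)=Z$, then inside a frozen block two queries give $F_v(x,Z)-F_v(x',Z)=f_v(x)-f_v(x')$ \emph{noiselessly}, so the algorithm recovers $v$ in $O(d)$ calls within a single block and the conditional KL in the chain rule is infinite after the first step. No data-processing or coupling argument rescues this, because the information about $v$ lives in the deterministic part $f_v$, not in the frozen noise.

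The paper sidesteps both issues with a different hard instance: $f_\omega(x)=\tfrac{\mu}{2}\|x\|^2+\langle S(x),\omega\rangle$ and oracle $F_\omega(x,Z)=\tfrac{\mu}{2}\|x\|^2+\langle S(x),Z+\omega\rangle$, where $S$ is a bounded separable map (each coordinate a smooth bump saturating at $\Theta(\mu\delta^2)$). The hypothesis $\omega$ now sits in the \emph{same additive slot} as the Gaussian state $Z\sim\mathcal N(0,s^2 I_d)$, so by data processing the transcript KL is at most the KL between the raw sequences $\{Z_i+\omega_1\}$ and $\{Z_i+\omega_2\}$ --- a quantity completely independent of the query points, which removes the need for any ball restriction. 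Within a block this sequence is constant, so repeated queries add nothing and the per-block KL is exactly $2/s^2$; summing over the expected $N/\tau$ regenerations and choosing $s^2\simeq N/\tau$ gives TV $\le 1/2$. The boundedness of $S$ simultaneously enforces the variance constraint $\mathbb E_\pi\langle S(x),Z\rangle^2\lesssim\omax$ and makes the Hamming-to-Euclidean step produce $d\delta^2\simeq\sqrt{d\tau\omax}/(\mu\sqrt N)$. The missing idea, then, is not a sharper block analysis but a different oracle design that hides the hypothesis inside the noise.
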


\begin{proof}
    Let's consider family of functions
    \begin{align*}
        f_\omega(x) = \frac{\mu}{2}\norm{x}^2 + \langle S(x), \omega \rangle
    \end{align*}
    with $\omega \in \{\pm1\}^d$ and $S: \R^d \rightarrow \R^d$ to be chosen later. For the same values of $\omega$, consider zeroth order oracles
    \begin{align*}
        F_{\omega}(x, Z) = \frac{\mu}{2}\norm{x}^2 + \langle S(x), Z + \omega \rangle = f_\omega(x) + \langle S(x), Z \rangle
    \end{align*}
    and discrete-time Markov process with transition probabilities determined by the formula
    \begin{align*}
        Z_{t + 1} =
        \begin{cases}
            \xi_{t + 1}, & \text{w.p. } 1/\tau, \\
            Z_{t}, & \text{w.p. } 1 - 1/\tau,    
        \end{cases}
    \end{align*}
    where $\{\xi_{t}\}^{\infty}_{t=1}$ are independent and 
    \begin{align*}
        \xi_t \sim \pi = \mathcal{N}(0, s^2I_d).
    \end{align*}
    With such pick of $Z_t$ it is clear that \Cref{as:mixing_time} is satisfied and 
    \begin{align*}
        \E_{\pi} F_{\omega}(x, Z) = f_{\omega}(x).
    \end{align*}

    Now, we will prove that all algorithms fail at distinguishing between $f_{\omega}$ in a short amount of time. First, note that 
    \begin{equation}
        \label{eq:picking_best_w}
        \norm{\hat{x} - x_{\omega}^*}^2 \geq \frac{1}{4}\|x_{\omega^\prime}^* - x_{\omega}^*\|^2
    \end{equation}
    where $\omega^\prime = \arg\min\limits_{\tilde{\omega}} \norm{\hat{x} - x_{\tilde{\omega}}^*}^2$. We will later bound $\|x_{\omega^\prime}^* - x_{\omega}^*\|$ using Hamming distance $\rho(\omega^\prime, \omega)$. But first, we bound the distance itself. 

    Applying Assouad's Lemma \citep{Tsybakov2009lower} we get
    \begin{equation}
        \label{eq:assouad}
        \max\limits_{\omega} \E_{\omega} \rho(\omega^\prime, \omega) \geq \frac{d}{2} \left(1 - \max\limits_{\rho(\omega_1, \omega_2) = 1}\norm{P_{\omega_1} - P_{\omega_2} }_{TV} \right)
    \end{equation}
    where $P_{\omega}$ denotes joint distribution of outputs of $F_{\omega}$ on sequential queries produced by the algorithm. And $\hat{x}$ is the output of the algorithm after $N$ steps.
    Now we bound the total variation between neighbouring distributions. First, we use Pinsker's inequality:
    \begin{align*}
        2\norm{P_{\omega_1} - P_{\omega_2} }^2_{TV} \leq D_{KL}\left(\text{Law}(\{\omega_1 + Z_i\}_{i=1}^{N}),~ \text{Law}(\{\omega_2 + Z_i\}_{i=1}^{N})\right) =   \bigg/
    \end{align*}
    Then, using law of total probability, we consider a conditional $KL$-divergence for a fixed set of indices that introduce new samples. The one step $KL$ equals $0$ if it is known that the chain's state did not change. On other steps it equals to the $KL$ between Gaussians with mean $\omega_1$ and $\omega_2$. We group the terms by the number of state switches $k$.
    \begin{align*}
        \bigg/ = \sum\limits_{k = 0}^{N}D_{KL}\!\left(\text{Law}(\{\mathcal{N}(\omega_1,\! s^2I)\}_{i=1}^{k}), \text{Law}(\{\mathcal{N}(\omega_2,\! s^2I)\}_{i=1}^{k})\right) \PP(|\{1 \leq t \leq N:~ Z_{t} = \xi_t\}| = k)
    \end{align*}
    Using $\rho(\omega_1, \omega_2) = 1$, we simplify
    \begin{gather*}
        = \sum\limits_{k = 0}^{N}D_{KL}\left(\mathcal{N}(1, s^2I_k),~ \mathcal{N}(-1, s^2I_k)\right) \PP(|\{1 \leq t \leq N:~ Z_{t} = \xi_t\}| = k) = \\ \sum\limits_{k = 0}^{N} \frac{2k}{s^2}\PP(|\{1 \leq t \leq N:~ Z_{t} = \xi_t\}| = k) = \frac{2}{s^2}\sum\limits_{k = 0}^{N} k\PP(|\{1 \leq t \leq N:~ Z_{t} = \xi_t\}| = k) = \\
        \frac{2}{s^2}\E(|\{1 \leq t \leq N:~ Z_{t} = \xi_t\}|) = \frac{2}{s^2}\sum\limits_{t = 1}^{N}\E(I_{Z_{t} = \xi_t}) = \frac{2N}{s^2\tau}.
    \end{gather*}

    Choosing $s^2 = \frac{8N}{\tau}$ we get
    \begin{equation}
        \label{eq:TV_bound}
        \norm{P_{\omega_1} - P_{\omega_2} }_{TV} \leq \sqrt{\frac{2N\tau}{8N\tau}} = \frac{1}{2}.
    \end{equation}

    Now we claim that there exists such pick of $S(x)$, that satisfies \Crefrange{as:lipsh_grad}{as:noise} and 
    \begin{equation}
        \label{eq:hamming}
        \norm{x_{\omega^\prime}^* - x_{\omega}^*}^2 \geq \frac{1}{2}\frac{\sqrt{\tfrac{\omax}{9}\tau}}{\sqrt{4\mu^2dN}}\rho(\omega^\prime, \omega) = \frac{1}{12} \frac{\sqrt{\omax\tau}}{\sqrt{\mu^2dN}}\rho(\omega^\prime, \omega).
    \end{equation}

    Combining \eqref{eq:picking_best_w}, \eqref{eq:hamming}, \eqref{eq:assouad}, \eqref{eq:TV_bound}, we conclude
    \begin{align*}
        \max\limits_{\omega} \E_{\omega} \norm{\hat{x} - x_{\omega}^*}^2 \geq \frac{1}{96}\frac{d\sqrt{\omax\tau}}{\sqrt{\mu^2dN}} = \frac{1}{96}\frac{\sqrt{d\tau\omax}}{\mu\sqrt{N}}.
    \end{align*}

    Now we should introduce $S(x)$ and check \eqref{eq:hamming} and \Crefrange{as:lipsh_grad}{as:noise}.

    Denote $\delta = \sqrt[4]{\frac{\omax\tau}{\mu^2dN}}$. Let S(x) be separable and
    \begin{align*}
        S(x)_i = \frac{\mu}{4} s(x_i) = 
        \frac{\mu}{4} \cdot \begin{cases}
            2\delta x_i, & 0 \leq x_i \leq \delta, \\
            3\delta^2 - (x_i - 2\delta)^2, & \delta \leq x_i \leq 2\delta, \\
            3\delta^2, & 2\delta \leq x_i. \\
        \end{cases}
    \end{align*}

    And $s(x_i)$ is symmetric around zero. It is straightforward to verify that $s(x_i)$ is $2$-smooth. To check strong convexity and smoothness of $f_w$ we note that
    \begin{align*}
        \nabla f_{\omega}(x) = \mu x + \nabla\langle S(x), \omega \rangle = \mu x + \nabla S(x) \odot \omega,
    \end{align*}
    where $\odot$ is a coordinate-wise product. The Lipschitz constant of the second term is bounded
    \begin{align*}
        \norm{\nabla S(x) \odot \omega - \nabla S(y) \odot \omega} = \norm{\nabla S(x) - \nabla S(y)} \leq \frac{\mu}{2}\norm{x - y}.
    \end{align*}

    It means that the strong convexity constant $\mu$ and gradient Lipschitz constant $L$ of the function $f_{\omega}$ are in range $[\frac{\mu}{2}; \frac{3\mu}{2}]$. Therefore, for a completely rigorous bound, we use $2\mu$ in \eqref{eq:hamming} instead of $\mu$.

    It is also straightforward to verify by stationarity condition that $x_{\omega}^* = -\tfrac{1}{2}\omega \delta$ and \eqref{eq:hamming} follows. Here we also note that $\|x_{\omega}^*\|^2 = \tfrac{1}{2}d\delta^2 < 1$ for big enough $N$, therefore the minimizer of the function lies in the standard unit ball when the desired accuracy is small enough.

    Lastly, we need to check the bounded noise assumption \eqref{as:noise}. With our current setup we can guarantee bounded variance with respect to stationary distribution
    \begin{equation}
        \label{eq:bounded_var_in_lb}
        \E_{\pi} h^2(x, Z) = \E_{\pi} \langle S(x), Z \rangle^2 = s^2\norm{S(x)}^2 \leq \frac{9s^2\mu^2d\delta^4}{16} = \frac{9N}{2\tau}\frac{\omax\tau}{N} \leq 9\omax.
    \end{equation}
    Therefore, for a completely rigorous bound, we use $\omax/9$ in \eqref{eq:hamming} instead of $\omax$. And a proper uniform bound is achieved via clipping, see \Cref{sec:clipping_appendix}.
\end{proof}
\subsection{Remarks on clipping}
\label{sec:clipping_appendix}
There is, however, another problem we have to deal with --- for now there is only a second-moment bound on the noise, just as in other lower bounds used that work with i.i.d. noise instead of Markovian. Tackling uniform boundness of an i.i.d. noise is straightforward --- since the noise distribution is Gaussian, we can use tail bounds to clip the noise within $[-\sigma\log N; \sigma\log N]$ for all querying points with probability $1 - o(1/N)$. It gives the desired bounds up to logarithmic factors for \Cref{th:lower_bound_2p_high_dim,th:lower_bound_1p_high_dim}.

However, in the settings of \Cref{th:lower_bound_1p_high_cor}, this trick will not work as the algorithm can deliberately call the oracle at a point that would produce high noise on the next step. To deal with this, we clip the oracle rather then noise.

For some $t > 1$ ($t$ is going to be logarithmic in $N$) we introduce
\begin{align*}
    \hat{F}(x, Z) = \max\left(\min_{\omega}f_{\omega}(x) - t\sigma_1,~ \min(F(x, Z),~ \max_{\omega}f_{\omega}(x) + t\sigma_1)\right).
\end{align*}

By construction

\begin{align*}
    |\hat{F}(x, Z) - \E_{\pi}\hat{F}(x, Z)|^2 \leq 2t^2\omax + 2|\max_{\omega}f_{\omega}(x) - \min_{\omega}f_{\omega}(x)|^2 = \\2t^2\omax + 2\norm{S(x)}^2_1 \leq 2t\omax + 8d^2\mu^2\delta^4 = 2t^2\omax + \frac{8d^2\omax\tau}{N}.
\end{align*}

Note that for big enough $N$, the second term becomes negligible. Now, the clipping introduces bias of the form

\begin{align*}
    |\E_{\pi} F(x, Z) - \E_{\pi}\hat{F}(x, Z)| \leq |\E_{\pi} h(x, Z)I_{h(x, Z) > t\sigma}| &\leq
    \\
    \leq \int\limits_{t\sigma}^{\infty}x e^{-\frac{x^2}{2\omax}}dx = \omax\int\limits_{t}^{\infty}x e^{-\frac{x^2}{2}}dx &= \omax e^{-\frac{t^2}{2}}.
\end{align*}

Choosing $t \sim \log N$ makes this bias superpolinomially small in $N$ i.e. $\lesssim poly(\frac{1}{N})$, making it within an admissible level of adversarial bias $\Delta \lesssim \tfrac{\varepsilon\mu^{3/2}}{dL}$. This last step, which introduces a bias, can be avoided through a careful adjustments of the Gaussian distributions used in the proof so that the mutual truncation would not result in a change of expected value. This is possible since the total probability mass that is affected by the truncation is exponentially small, therefore the total variation distance remains large after any transformations with this mass.

\subsection{One-point high dimensional regime}
An i.i.d. one-point setup is covered by \citet{akhavan2020exploiting}, where authors considered a more general case of high-order smoothness of the objective and provided a lower bound for \emph{any} distribution of the additive noise. Our point of view is different -- we work with usual smooth functions, consider a limiting behavior when $N\to\infty$ and are free to choose the noise structure. However, we also claim stronger result - our bound shows additional $\mu^2\omax$ scaling and is asymptotically tight, according to the \Cref{th:acc}.
\begin{theorem}[one-point, high-dimensional]
    \label{th:lower_bound_1p_high_dim} Under the conditions of \Cref{th:lower_bound_1p} the following bound holds:
    \begin{align*}
        \E\norm{\hat{x}_N - x^*}^2 \gtrsim \frac{\sqrt{d^2\omax}}{\mu\sqrt{N}}.
    \end{align*}
\end{theorem}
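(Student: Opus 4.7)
The plan is to adapt the Assouad-based argument from the proof of \Cref{th:lower_bound_1p_high_cor} to the i.i.d.\ regime, exploiting the fact that independent noise at each step admits a tighter, \emph{query-independent} per-step KL bound than the Markov case. I would reuse the family $f_\omega(x) = \tfrac{\mu}{2}\norm{x}^2 + \langle S(x), \omega\rangle$ with $\omega \in \{\pm 1\}^d$ and the separable Huber-like $S$ from the previous proof, but replace the $s^2$-scaled Markovian corruption $\langle S(x), Z\rangle$ by a direct additive i.i.d.\ Gaussian oracle $F_\omega(x, Z) = f_\omega(x) + Z$ with $Z \sim \mathcal N(0, \omax)$. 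This oracle trivially satisfies \Cref{as:noise,as:mixing_time} with $\tau = 1$.

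Next, by the chain rule and the fact that i.i.d.\ Gaussian noise makes each oracle response a Gaussian conditioned on the query $x_i$, the KL between the joint laws $P_\omega$ of $N$ observed oracle values factorizes:
\begin{align*}
\KL(P_\omega \Vert P_{\omega'}) = \sum_{i=1}^{N}\E_\omega\qty[\frac{(f_\omega(x_i) - f_{\omega'}(x_i))^2}{2\omax}].
\end{align*}
Crucially, the Huber construction guarantees $|S(x)_j| \lesssim \mu\delta^2$ \emph{uniformly} in $x$, so for any neighboring pair $\rho(\omega,\omega') = 1$ differing in coordinate $j$ the difference $|f_\omega(x) - f_{\omega'}(x)| = 2|S(x)_j|$ is bounded by $O(\mu\delta^2)$ \emph{independently of the algorithm's queries}. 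This yields $\KL \lesssim N\mu^2\delta^4/\omax$ uniformly, and Pinsker converts this into $\max_{\rho=1}\norm{P_\omega - P_{\omega'}}_{\mathrm{TV}} \leq 1/2$ once we set $\delta^2 \simeq \sqrt{\omax}/(\mu\sqrt N)$.

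Plugging this into Assouad's lemma exactly as in the previous theorem, and using the same separation $\|x^*_\omega - x^*_{\omega'}\|^2 = \tfrac{\delta^2}{4}\rho(\omega,\omega')$ between minimizers, delivers
\begin{align*}
\max_\omega \E_\omega\norm{\hat{x}_N - x^*_\omega}^2 \gtrsim d\cdot \delta^2 \simeq \frac{d\sqrt{\omax}}{\mu\sqrt N},
\end{align*}
which is the claimed rate. The principal subtlety, as in the high-correlation case, is converting the Gaussian second-moment noise into the uniform bound required by \Cref{as:noise}; however, because the noise is now a state-independent Gaussian rather than the state-dependent $\langle S(x), Z\rangle$ of the Markov proof, the standard truncation at $\pm c\sqrt{\omax \log N}$ introduces only a superpolynomially small bias, easily absorbed into the admissible adversarial level $\Delta$ of \Cref{sec:clipping_appendix}. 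This route also makes transparent why \citet{akhavan2024gradient} obtain the same $d^2$ scaling: tracking the Gaussian noise variance explicitly recovers the $\omax$ factor (and the $\mu^2$ factor from the separation of minimizers) that their statement absorbs into unspecified constants.
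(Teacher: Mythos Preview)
Your proposal is correct and follows essentially the same route as the paper: the same Huber-based family $f_\omega$ is reused with point-independent i.i.d.\ Gaussian additive noise, the per-step KL is bounded via $\|f_{\omega_1}-f_{\omega_2}\|_\infty^2 \lesssim \mu^2\delta^4$ to get $\KL \lesssim N\mu^2\delta^4/\omax$, and the choice $\delta = (\omax/(\mu^2 N))^{1/4}$ is plugged into Assouad exactly as before. The paper phrases the KL step by citing \citet[Theorem~6.1]{akhavan2020exploiting} with $I_0 = 1/(2\omax)$ rather than writing out the chain rule, but the computation and the clipping caveat are the same.
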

\begin{proof} Under closer consideration, the proof repeats, simplifies and extends the construction of \citet{akhavan2020exploiting}, using our assumptions. But it will be easier for presentation to build on our own notation from \Cref{th:lower_bound_1p_high_cor}.

    We consider the same family of functions $f_{\omega}$, but the noise is i.i.d. and point-independent Gaussian with variance $\omax$. This requires redefining $\delta$ and revising \eqref{eq:TV_bound} and \eqref{eq:hamming}. With this noise, we use bound on the $KL$ divergence between neighboring distributions similar to \citet[Theorem 6.1]{akhavan2020exploiting}. We also use that $I_0 = \frac{1}{2\omax}$ for Gaussian distributions. We get
    \begin{gather*}
        D_{KL}(P_{\omega_1},P_{\omega_2}) \leq \frac{N}{2\omax} \|f_{\omega_1} - f_{\omega_2}\|^2_{\infty} < \frac{N\mu^2\delta^4}{2\omax}.
    \end{gather*}
    Redefining $\delta = \sqrt[4]{\frac{\omax}{\mu^2N}}$ we check that \eqref{eq:TV_bound} holds. The \eqref{eq:hamming} then transforms into
    \begin{align*}
        \norm{x_{\omega^\prime}^* - x_{\omega}^*}^2 \geq \frac{1}{2}\sqrt{\frac{\omax}{4\mu^2N}}\rho(\omega^\prime, \omega).
    \end{align*}

    Combining \eqref{eq:picking_best_w}, \eqref{eq:hamming}, \eqref{eq:assouad}, \eqref{eq:TV_bound}, we conclude
    \begin{align*}
        \max\limits_{\omega} \E_{\omega} \norm{\hat{x} - x_{\omega}^*}^2 \geq \frac{1}{16}\frac{d\sqrt{\omax}}{\sqrt{\mu^2N}}.
    \end{align*}
\end{proof}

\subsection{Two-point high dimensional regime}
\Cref{th:lower_bound_2p_high_dim} below shows a reduction from the lower bound by \citet{duchi2015optimal} to a strongly convex objectives. Coupled with the clipping technique discussed above, it concludes all the proofs of the section.
\begin{theorem}[two-point, high-dimensional]
    \label{th:lower_bound_2p_high_dim} Under the conditions of \Cref{th:lower_bound_2p} the following bound holds:
    \begin{align*}
        \E\norm{\hat{x}_N - x^*}^2 \gtrsim \frac{d\tmax}{\mu^2 N}.
    \end{align*}

\end{theorem}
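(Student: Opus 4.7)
The plan is to adapt the two-point lower bound of \citet{duchi2015optimal} to the strongly convex setting by adding a shared quadratic term to a linear hard family. For $\omega \in \{-1, +1\}^{d}$ and a scale $\delta$ to be tuned, define
\begin{equation*}
    f_{\omega}(x) = \tfrac{\mu}{2}\norm{x}^{2} + \delta \langle \omega, x \rangle, \qquad x^{*}_{\omega} = -(\delta/\mu)\omega,
\end{equation*}
and the stochastic oracle $F_{\omega}(x,Z) = f_{\omega}(x) + \langle Z, x \rangle$ with i.i.d.\ noise $Z_{k} \sim \mathcal{N}(0, (\tmax/d) I_{d})$. Each $f_{\omega}$ is $\mu$-strongly convex and $\mu$-smooth, and the oracle satisfies $\E_{\pi}\nabla F = \nabla f_{\omega}$, $\E\norm{\nabla F - \nabla f_{\omega}}^{2} = \tmax$, while $\tau = 1 \leq d$ makes \Cref{as:mixing_time} trivial. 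Crucially, the quadratic structure renders the two-point estimator exact:
\begin{equation*}
    \frac{F_{\omega}(x+te,Z) - F_{\omega}(x-te,Z)}{2t} = \langle \mu x + \delta\omega + Z, e\rangle.
\end{equation*}

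The core argument is a coordinate-wise Le Cam reduction combined with a budget pigeonhole on the adaptive direction choices. At query $k$, the observation is, conditional on $(x_{k}, e_{k})$, Gaussian with mean $\langle \mu x_{k} + \delta\omega, e_{k}\rangle$ and variance $\tmax/d$. For a pair $\omega, \omega'$ that differ only at coordinate $i$, the KL chain rule for adaptively generated laws yields
\begin{equation*}
    D_{KL}\bigl(P_{\omega}^{N} \,\|\, P_{\omega'}^{N}\bigr) = \frac{2d\delta^{2}}{\tmax}\sum_{k=1}^{N} \E_{\omega}\bigl[(e_{k})_{i}^{2}\bigr].
\end{equation*}
The key observation is the pathwise identity $\sum_{i}(e_{k})_{i}^{2} = \norm{e_{k}}^{2} = 1$, which forces $\sum_{i} \sum_{k} \E_{\omega}[(e_{k})_{i}^{2}] = N$. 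Hence at least $d/2$ coordinates admit per-coordinate budget bounded by $2N/d$, giving KL $\leq 4\delta^{2} N/\tmax$ along those coordinates. Choosing $\delta^{2} = c\tmax/N$ with a small constant $c$ keeps the KL bounded, so Pinsker yields TV $\leq 1/2$ and coordinate-wise Le Cam forces the plug-in estimator $\hat\omega = \arg\min_{\tilde\omega}\norm{\hat{x}_{N} - x^{*}_{\tilde\omega}}^{2}$ to disagree with $\omega$ with constant probability on each such coordinate. Summing over the $d/2$ indistinguishable coordinates and translating to distance via $\norm{x^{*}_{\omega_{1}} - x^{*}_{\omega_{2}}}^{2} = 4(\delta/\mu)^{2}\rho(\omega_{1}, \omega_{2})$ combined with $\norm{\hat{x}_{N} - x^{*}_{\omega}}^{2} \geq \tfrac{1}{4}\norm{x^{*}_{\hat\omega} - x^{*}_{\omega}}^{2}$ from \eqref{eq:picking_best_w} gives
\begin{equation*}
    \E\norm{\hat{x}_{N} - x^{*}_{\omega}}^{2} \gtrsim d \cdot (\delta/\mu)^{2} \simeq \frac{d\tmax}{\mu^{2} N}.
\end{equation*}

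The main obstacle is controlling the adaptive direction selection: a direct application of standard Assouad with worst-case TV between neighboring instances would lose a factor of $d$, because the algorithm could in principle concentrate all $N$ queries on a single coordinate. The remedy --- essentially the Duchi--Jordan--Wainwright trick --- is precisely the budget identity $\sum_{i}(e_{k})_{i}^{2} = 1$, which spreads information across coordinates so that most of them receive only $N/d$ worth of signal and hence remain statistically indistinguishable. Finally, upgrading the second-moment noise bound used here to the uniform \Cref{as:noise_two} is handled by the Gaussian-tail clipping of \Cref{sec:clipping_appendix}, incurring only the logarithmic overhead absorbed in $\gtrsim$.
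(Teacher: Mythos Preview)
Your proposal is correct and follows essentially the same route as the paper: the same quadratic-plus-linear hard family $f_\omega(x)=\tfrac{\mu}{2}\|x\|^2+\delta\langle\omega,x\rangle$ with i.i.d.\ Gaussian noise of per-coordinate variance $\tmax/d$, the same parameter choice $\delta^2\simeq\tmax/N$, and the same reduction to the coordinate-wise testing bound of \citet{duchi2015optimal} via the direction-budget identity $\sum_i(e_k)_i^2=1$. The only difference is presentational --- the paper simply cites Duchi et al.\ for the inequality $\sum_i\PP(\text{sign}(\hat x_N^i)\neq -v_i)\geq d(1-\sqrt{2N\delta^2/(ds^2)})$ and notes that the shared deterministic quadratic does not affect it, whereas you sketch the underlying KL-plus-pigeonhole mechanism; one minor caveat is that the two-point oracle actually returns two scalars per round rather than the single directional derivative you describe, but this only shifts constants in the budget and does not affect the argument.
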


\begin{proof} 
    Let's consider family of functions for $v \in \{\pm 1\}^d$

    \begin{align*}
        f_v(x) = \frac{\mu}{2}\norm{x}^2 + \delta\langle x, v\rangle
    \end{align*}

    and corresponding oracles

    \begin{align*}
        F_v(x, Z) = \frac{\mu}{2}\norm{x}^2 + \langle x, \delta v + Z\rangle.
    \end{align*}

    The noise sequence $Z_i$ is not given any Markovianity, instead we choose it to be i.i.d. $\sim\mathcal{N}(0, s^2I_d)$. This family readily satisfies \Crefrange{as:lipsh_grad_two}{as:noise_two} with the parameter $\tmax \geq \E\norm{Z}^2 = ds^2$. Again, here we consider only a second moment bound, as discussed above.

    This construction is similar to the one used in a proof by \citet[Proposition 1]{duchi2015optimal}, but here we add a deterministic quadratic part, as we work with a strongly convex problems. Therefore, there is always a global minimizer of the function

    \begin{align*}
        x^*_v = \arg\min f_v(x) = -\frac{\delta}{\mu}v.
    \end{align*}

    As usual, we can bound distance to the optima with the Hamming distance between the signs of the estimate and the optima

    \begin{align*}
        \max\limits_v \E \norm{\hat{x}_N - x_v^*}^2 \geq \frac{\delta^2}{\mu^2} \sum\limits_{i=1}^d \PP(\text{sign}(\hat{x}_N^i) \neq -\text{sign}(v^i)).
    \end{align*}

    \citet{duchi2015optimal} prove a lower bound on the sum of such probabilities

    \begin{align*}
        \sum\limits_{i=1}^d \PP(\text{sign}(\hat{x}_N^i) \neq -\text{sign}(v^i)) \geq d\left(1 - \sqrt{\frac{2N\delta^2}{ds^2}}\right).
    \end{align*}

    This inequality also applies to our set of functions as they differ only by a common deterministic function. Therefore, we get  

    \begin{align*}
        \max\limits_v \E \norm{\hat{x}_N - x_v^*}^2 \geq \frac{d\delta^2}{\mu^2}\left(1 - \sqrt{\frac{2N\delta^2}{ds^2}}\right).
    \end{align*}
    Choosing $s^2 = \frac{\tmax}{d}$ and $\delta^2 = \frac{\tmax}{4N}$ gives the desired result

    \begin{align*}
        \E\norm{\hat{x}_N - x^*}^2 \gtrsim \frac{d\tmax}{\mu^2 N}.
    \end{align*}

\end{proof}

\section{Basic Facts}
\label{sec:basics_appendix}
\begin{lemma}
    \label{lem:smoth}
    If $f$ is $L$-smooth in $\R^d$, then for any $x,y\in \R^d$
    \begin{equation}
        \label{eq:smothness}
        f(x) - f(y) -  \langle \nabla f(y), x-y \rangle \leq \frac{L}{2} \norm{ x - y}^2.
    \end{equation}
\end{lemma}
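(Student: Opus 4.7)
The plan is to prove this standard descent lemma via the fundamental theorem of calculus applied along the segment connecting $y$ to $x$. First I would write
\[
f(x) - f(y) = \int_0^1 \langle \nabla f(y + s(x-y)), x - y \rangle \, ds,
\]
which is valid since $f$ is differentiable (being $L$-smooth). Since $\langle \nabla f(y), x-y \rangle = \int_0^1 \langle \nabla f(y), x-y \rangle \, ds$, subtracting gives the identity
\[
f(x) - f(y) - \langle \nabla f(y), x-y \rangle = \int_0^1 \langle \nabla f(y + s(x-y)) - \nabla f(y), x - y \rangle \, ds.
\]

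Next I would bound the integrand by Cauchy--Schwarz and the $L$-smoothness assumption from \Cref{as:lipsh_grad}:
\[
\langle \nabla f(y + s(x-y)) - \nabla f(y), x - y \rangle \leq \norm{\nabla f(y + s(x-y)) - \nabla f(y)} \cdot \norm{x-y} \leq L s \norm{x-y}^2.
\]
Integrating in $s$ over $[0,1]$ yields $\int_0^1 L s \norm{x-y}^2 \, ds = \tfrac{L}{2}\norm{x-y}^2$, which is exactly the desired bound.

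This lemma is completely classical, so I do not anticipate any real obstacle. The only minor point to keep in mind is that the integral representation requires $f$ to be differentiable, which is guaranteed by the $L$-smoothness assumption as stated in \Cref{as:lipsh_grad}. Everything else reduces to applying Cauchy--Schwarz and the Lipschitz bound on $\nabla f$ pointwise, followed by a one-dimensional integration.
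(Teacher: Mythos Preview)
Your argument is correct and is the standard proof of the descent lemma. The paper itself does not supply a proof: the statement appears in the ``Basic Facts'' appendix as a common-knowledge inequality and is left unproved, so there is nothing to compare against beyond noting that your integral-representation approach is the textbook one.
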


\begin{lemma}[Cauchy Schwartz inequality]
    \label{lem:CS}
    For any $a,b,x_1, \ldots, x_n \in \R^d$ and $c > 0$ the following inequalities hold:
    \begin{equation}
        \label{eq:inner_prod}
        2\langle a,b \rangle  \leq \frac{\norm{a}^2}{c} + c
        \norm{b}^2,
    \end{equation}
    \begin{equation}
        \label{eq:inner_prod_and_sqr}
        \norm{a+b}^2 \leq \left(1 + \frac{1}{c}\right)\norm{a}^2 + (1+c)
        \norm{b}^2,
    \end{equation}
    \begin{equation}
        \label{eq:sum_sqr}
        \norm{\sum\limits_{i=1}^n x_i}^2 \leq n \cdot \sum\limits_{i=1}^n \norm{x_i}^2.
    \end{equation}
\end{lemma}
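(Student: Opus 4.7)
The three inequalities in \Cref{lem:CS} are all elementary consequences of the non-negativity of squared norms combined with the bilinearity of the inner product. My plan is to prove \eqref{eq:inner_prod} first as a self-contained Young-type inequality, then deduce \eqref{eq:inner_prod_and_sqr} by a direct expansion that reuses \eqref{eq:inner_prod}, and finally obtain \eqref{eq:sum_sqr} by expanding the squared norm into a double sum of inner products and controlling each entry.

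For \eqref{eq:inner_prod}, I would start from the tautology $0 \le \|a/\sqrt{c} - \sqrt{c}\,b\|^2$, expand the square using bilinearity, and rearrange to isolate the cross term $2\langle a,b\rangle$ on one side. This yields $2\langle a,b\rangle \le \|a\|^2/c + c\|b\|^2$ directly. For \eqref{eq:inner_prod_and_sqr}, I expand $\|a+b\|^2 = \|a\|^2 + 2\langle a,b\rangle + \|b\|^2$ and apply \eqref{eq:inner_prod} with the same parameter $c$ to the cross term; collecting the coefficients of $\|a\|^2$ and $\|b\|^2$ produces the claim immediately.

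For \eqref{eq:sum_sqr}, write $\|\sum_{i=1}^n x_i\|^2 = \sum_{i,j} \langle x_i, x_j\rangle$. Each inner product is bounded by $\tfrac{1}{2}(\|x_i\|^2 + \|x_j\|^2)$, which is \eqref{eq:inner_prod} specialized to $c=1$. Summing over the $n^2$ index pairs, each $\|x_i\|^2$ term appears $2n$ times as either a first or second slot, contributing $n\|x_i\|^2$ after the factor of $\tfrac12$; this yields the desired bound $n \sum_i \|x_i\|^2$. An equivalent route is the Cauchy--Schwartz inequality in $\mathbb{R}^n$ applied to the vectors $(1,\ldots,1)$ and $(\|x_1\|,\ldots,\|x_n\|)$, which gives $(\sum_i \|x_i\|)^2 \le n \sum_i \|x_i\|^2$, and then $\|\sum_i x_i\| \le \sum_i \|x_i\|$ by the triangle inequality closes the loop.

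None of these steps is a genuine obstacle: the lemma is a compact packaging of Young's and Cauchy--Schwartz inequalities. The only care required is to use a single positive parameter $c$ consistently when passing from \eqref{eq:inner_prod} to \eqref{eq:inner_prod_and_sqr}, and to verify that the double-counting in the derivation of \eqref{eq:sum_sqr} produces exactly the constant $n$ rather than a strictly larger constant.
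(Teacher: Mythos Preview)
Your proof is correct. The paper itself does not supply a proof for this lemma: it is listed in the ``Basic Facts'' appendix and stated without any proof environment, so there is nothing to compare your argument against beyond noting that your derivations of \eqref{eq:inner_prod}, \eqref{eq:inner_prod_and_sqr}, and \eqref{eq:sum_sqr} are all valid and standard.
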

\begin{lemma}
    For a random variable $\xi$ with a finite second moment:
    \begin{equation}
        \label{eq:var_2nd_moment}
        \E\norm{ \xi - \E \xi }^2 \leq\E \| \xi \|^2\,.
    \end{equation}
\end{lemma}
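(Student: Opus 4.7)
The plan is to expand the squared norm and use linearity of expectation. I would start from the bilinear expansion
\[
\|\xi - \E\xi\|^2 = \|\xi\|^2 - 2\langle \xi, \E\xi\rangle + \|\E\xi\|^2,
\]
and then take expectations term by term. Because $\E\xi$ is a deterministic vector, $\E\langle \xi, \E\xi\rangle = \langle \E\xi, \E\xi\rangle = \|\E\xi\|^2$, so the cross term combines with the last term to yield the standard variance-versus-second-moment identity
\[
\E\|\xi - \E\xi\|^2 = \E\|\xi\|^2 - \|\E\xi\|^2.
\]

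The claimed inequality then follows immediately by noting that $\|\E\xi\|^2 \geq 0$, so subtracting this nonnegative quantity only makes the right-hand side smaller. Equivalently, this is the multivariate analogue of the scalar identity $\operatorname{Var}(X) = \E X^2 - (\E X)^2 \leq \E X^2$, applied coordinate-wise and summed.

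There is essentially no obstacle here: the only subtlety is to check that the finite-second-moment hypothesis $\E\|\xi\|^2 < \infty$ guarantees every expectation above is well-defined. This is a consequence of Jensen's inequality (or Cauchy--Schwarz), which gives $\|\E\xi\|^2 \leq \E\|\xi\|^2 < \infty$, so $\E\xi$ exists and each step in the identity is rigorously justified.
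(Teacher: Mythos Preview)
Your proof is correct; the paper itself states this lemma in its ``Basic Facts'' appendix without proof, treating it as standard. Your expansion via the identity $\E\|\xi-\E\xi\|^2=\E\|\xi\|^2-\|\E\xi\|^2$ is exactly the canonical argument one would expect here.
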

\begin{lemma}[Jensen's inequality]
    \label{lem:Jensen}
    If $f$ is a convex function, then for any $n \in \mathbb{N}^*$ and $x_1, \ldots, x_n \in \R^d$ the following inequality holds:
    \begin{equation}
        \label{eq:jensen}
        f\left(\frac{1}{n} \sum\limits_{i=1}^n x_i\right) \leq \frac{1}{n} \sum\limits_{i=1}^n f(x_i)\,.
    \end{equation}
    Probabilistic form:
    \begin{equation*}
        f(\E \left[ X \right]) \leq \E \left [f(X) \right].
    \end{equation*}
    Applied to $f(X) = \norm{ X }^2$:
    \begin{equation}
        \label{eq:norm_jensen}
        \norm{ \E \left[ X \right]}^2 \leq \E \left [ \norm{X }^2 \right].
    \end{equation}

\end{lemma}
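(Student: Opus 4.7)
The plan is to prove the three packaged claims---the finite-average form, the probabilistic form, and the $\norm{\cdot}^2$ specialization---in that order, each reducing to the previous. None of the three requires any ingredient beyond the definition of convexity and (for the continuous statement) a single application of the supporting hyperplane theorem, so this will be a short argument.

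For the finite-average form I would use induction on $n$. The case $n = 1$ is trivial and the case $n = 2$ is precisely the definition of convexity applied with $\lambda = 1/2$. For the step from $n$ to $n+1$, write
\[
    \frac{1}{n+1}\sum_{i=1}^{n+1} x_i \;=\; \frac{n}{n+1}\,\bar{x}_n + \frac{1}{n+1}\, x_{n+1}, \qquad \bar{x}_n := \frac{1}{n}\sum_{i=1}^{n} x_i,
\]
then apply convexity to this two-term convex combination with weight $\lambda = n/(n+1)$, and finally plug in the inductive hypothesis for $f(\bar{x}_n)$. The factors telescope and one recovers the desired inequality.

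For the probabilistic form, the cleanest route is via a supporting hyperplane at $\mu := \E[X]$: convexity of $f$ supplies a subgradient $v \in \partial f(\mu)$ such that $f(x) \geq f(\mu) + \langle v, x - \mu \rangle$ for every $x$. Substituting $x = X$ and taking expectation yields
\[
    \E[f(X)] \;\geq\; f(\mu) + \langle v, \E[X] - \mu \rangle \;=\; f(\mu),
\]
which is the claim. An alternative route that avoids subgradients is to approximate $X$ by simple random variables, apply the finite-average form, and pass to the limit using continuity of $f$ on the interior of its domain.

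Finally, the specialization to $f(x) = \norm{x}^2$ is immediate: the Hessian is $2I \succeq 0$, so $f$ is convex on $\R^d$, and the probabilistic form yields $\norm{\E[X]}^2 \leq \E[\norm{X}^2]$ directly. The only technical nuance in the whole argument is guaranteeing existence of a subgradient at $\mu$, which follows from standard convex analysis whenever $\mu$ lies in the interior of the domain; in the differentiable case (which covers the $\norm{\cdot}^2$ application) one simply takes $v = \nabla f(\mu)$, so no obstacle of substance arises.
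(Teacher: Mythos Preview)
Your proof is correct and complete. The paper, however, states this lemma in its ``Basic Facts'' appendix without any proof---it is simply listed as a well-known result alongside Cauchy--Schwarz and the smoothness inequality---so there is no paper argument to compare against; your induction-plus-supporting-hyperplane treatment is the standard textbook route and is entirely adequate.
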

\begin{lemma}[Norm of random projection]
    For $e \sim RS^d_2(1)$ the following equality holds:
    \begin{equation}
        \label{eq:random_projection_norm}
        \E_e \langle v, e \rangle ^ 2 = \norm{v}^2 \cdot 1/d\,.
    \end{equation}
    \begin{proof}    
        \begin{align*}
            \E \langle v, e \rangle ^ 2
            =
            \norm{v}^2\E \langle v / \| v \|, e \rangle ^ 2  
            =
            \norm{v}^2\E \langle (1, 0, \hdots, 0), \tilde{e} \rangle ^ 2 
            =
            \norm{v}^2\E \left[\tilde{e}_1\right]^2 
            \overset{\circledOne}{=}
            \norm{v}^2 \cdot 1/d,
        \end{align*}
        where $\circledOne$ uses $\sum_i\E \left[\tilde{e}_i\right]^2 = 1$ and $E \left[\tilde{e}_1\right]^2 =\E \left[\tilde{e}_2\right]^2 = \hdots$
    \end{proof}
\end{lemma}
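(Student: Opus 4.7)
The plan is to exploit the rotational invariance of the uniform measure on the sphere. Assume $v \neq 0$ (the case $v = 0$ is trivial). Let $O$ be any orthogonal matrix mapping $v/\|v\|$ to the first standard basis vector. Since $RS_2^d(1)$ is invariant under orthogonal transformations, $Oe$ has the same distribution as $e$, and therefore $\langle v, e\rangle = \|v\| \langle v/\|v\|, e\rangle$ is distributed as $\|v\| \cdot e_1$, where $e_1$ denotes the first coordinate of a fresh sample $e \sim RS_2^d(1)$. This reduces the claim to proving $\E[e_1^2] = 1/d$.

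Next, I would again invoke rotational invariance, this time using a permutation matrix that swaps coordinates $1$ and $i$: it is orthogonal, so under it $e_1$ and $e_i$ are identically distributed. Hence $\E[e_i^2]$ is the same constant $c$ for every $i$. Finally, since $e$ lies on the unit sphere almost surely, $\sum_{i=1}^d e_i^2 = 1$, and taking expectations gives $d \cdot c = 1$, so $c = 1/d$.

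Putting the two pieces together yields $\E \langle v, e\rangle^2 = \|v\|^2 \cdot \E[e_1^2] = \|v\|^2/d$. An equivalent and slightly slicker rephrasing is to note that the matrix $\E[ee^{\top}]$ must commute with every orthogonal matrix by invariance, so by Schur's lemma it is a multiple of the identity; taking trace gives $\E[ee^{\top}] = I/d$, and then $\E\langle v,e\rangle^2 = v^{\top} \E[ee^{\top}] v = \|v\|^2/d$. There is no real obstacle here — the result is essentially a symmetry exercise, and the only subtlety is stating cleanly why the uniform spherical measure is orthogonally invariant (which follows from its characterization as the unique rotation-invariant Borel probability measure on $S^{d-1}$).
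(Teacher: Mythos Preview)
Your proof is correct and follows essentially the same route as the paper: reduce to the first coordinate via rotational invariance, then use coordinate symmetry together with $\sum_i e_i^2 = 1$ to conclude $\E[e_1^2]=1/d$. You spell out the invariance steps a bit more carefully and add the nice $\E[ee^\top]=I/d$ reformulation, but the underlying argument is identical.
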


}

\end{appendixpart}
\end{document}